\newcommand{\E}{\mathcal E}
\newcommand{\M}{\mathcal M}
\newcommand{\w}{\omega}
\newcommand{\A}{\mathcal A}
\newcommand{\IR}{\mathbb R}
\newcommand{\e}{\varepsilon}
\newcommand{\diam}{\mathrm{diam}}
\newcommand{\Homeo}{\mathcal H}
\newcommand{\IN}{\mathbb N}
\newcommand{\so}{\mathsf{so}}
\newcommand{\uc}{\mathsf{uc}}
\newcommand{\F}{\mathcal F}
\newcommand{\osc}{\mathrm{osc}}
\newcommand{\supp}{\mathrm{supp}}
\newcommand{\non}{\mathrm{non}}
\newcommand{\Ra}{\Rightarrow}
\newcommand{\as}{\mathsf \Delta^\circ_\w}
\newtheorem{theorem}{Theorem}[section]
\newtheorem{proposition}[theorem]{Proposition}
\newtheorem{corollary}[theorem]{Corollary}
\newtheorem{lemma}[theorem]{Lemma}
\newtheorem{problem}[theorem]{Problem}
\newtheorem{claim}{Claim}
\theoremstyle{definition}
\newtheorem{definition}[theorem]{Definition}
\newtheorem{example}[theorem]{Example}
\newtheorem{remark}[theorem]{Remark}
\title{Constructing a coarse space with a given Higson or binary corona}
\author{Taras Banakh and Igor Protasov}
\address{T.Banakh: Jan Kochanowski University in Kielce (Poland) and Ivan Franko National University of Lviv, Universytetska 1, 79000, Lviv, Ukraine}
\email{t.o.banakh@gmail.com}
\address{I.Protasov: Faculty of Computer Science and Cybernetics, Kyiv University,          Academic Glushkov pr. 4d, 03680 Kyiv, Ukraine}
\email{i.v.protasov@gmail.com}
\subjclass{54D30, 54D35, 54D40, 54F05, 54E15, 54E35}
\keywords{Coarse space, finitary coarse space, Higson compactification, Higson corona, $G$-space, soft compactification}
\begin{document}
\begin{abstract} For any compact Hausdorff space $K$ we construct a canonical finitary coarse structure $\E_{X,K}$ on the set $X$ of isolated points of $K$. This construction has two properties:
\begin{enumerate}
\item If a finitary coarse space $(X,\E)$ is metrizable, then its coarse structure $\E$ 
coincides with the coarse structure $\E_{X,\bar X}$ generated by the Higson compactification $\bar X$ of $X$.
\item A compact Hausdorff space $K$ coincides with the Higson compactification of the coarse space $(X,\E_{X,K})$ if the set $X$ is dense in $K$ and the space $K$ is Fr\'echet-Urysohn.
\end{enumerate}
This implies that a compact Hausdorff space $K$ is homeomorphic to the Higson corona of some finitary coarse space  if one of the following conditions holds: (i) $K$ is perfectly normal; (ii) $K$ has weight $w(K)\le\w_1$ and character $\chi(K)<\mathfrak p$. Under CH every (zero-dimensional) compact Hausdorff space of weight $\le\w_1$ is homeomorphic to the Higson (resp. binary) corona of some cellular finitary coarse space.
\end{abstract}
\maketitle

\section{Introduction}

This paper was motivated by the following 

\begin{problem}\label{prob0} Which compact Hausdorff spaces are homeomorphic to the Higson compactifications or the Higson coronas of (finitary) coarse spaces?
\end{problem}

Answering this problem we shall prove the following statements: 
\begin{itemize}
\item each first-countable compact Hausdorff space with dense set of isolated points is homeomorphic to the Higson compactification of some finitary coarse space; 
\item each perfectly normal compact Hausdorff space is homeomorphic to the Higson corona of a suitable finitary coarse space;
\item under CH, each (zero-dimensional) compact Hausdorff space of weight $\le\w_1$ is homeomorphic to the Higson (resp. binary) corona of some cellular finitary coarse space.
\end{itemize}
Also we present simple examples of  compact Hausdorff spaces with countable dense set of isolated points, which are not homeomorphic to the Higson compactifications of finitary coarse spaces.

It should be mentioned that for compact metrizable spaces the ``corona'' part of Problem~\ref{prob0} was resolved by Mine and Yamashita \cite{MY} who proved that each compact metrizable space is homeomorphic to the Higson corona of a suitable locally compact metric space endowed with the $C_0$ coarse structure of Wright \cite{Wri}.

The paper is organized as follows. In Section~\ref{s:def} we recall the necessary information on coarse spaces and present (and characterize) two important classes of coarse spaces: metrizable and finitary. In Section~\ref{s:H}, given a proper metric space $M$ and a coarse space $(X,\E)$, we introduce two compact Hausdorff spaces $h_M(X,\E)$ and $\hbar_M(X,\E)$, called the $M$-compactification and the $M$-corona of the coarse space $(X,\E)$, respectively. The compact spaces $h_\IR(X,\E)$ and $\hbar_\IR(X,\E)$ (resp. $h_{\mathsf 2}(X,\E)$ and $\hbar_{\mathsf 2}(X,\E)$) coincide with the Higson compactification and the Higson corona (resp.  the binary compactification and the binary corona) of the coarse space $(X,\E)$, introduced by Higson \cite{Higson} (resp. Protasov \cite{Prot_bc}, \cite{Prot05}). In Section~\ref{s:P}, for any compact Hausdorff space $K$ we define  a finitary coarse structure $\E_{X,K}$ on the set $X$ of isolated points of $K$ and call the obtained coarse space $(X,\E_{X,K})$ {\em the permutation coarse space} of the compact space $K$. The main result of Section~\ref{s:M} is Theorem~\ref{t:M} saying each metrizable finitary coarse space $(X,\E)$ coincides with the permutation coarse space $(X,\E_{X,\bar X})$ of the Higson compactification $\bar X$ of $X$. In Section~\ref{s:soft} we define the notion of an $\IR$-soft compactification and prove that a compact Hausdorff space $K$ with dense set of isolated points is $\IR$-soft if and only if $K$ coincides with the Higson compactification of its permutation coarse space. Moreover, in Theorem~\ref{t:mainCH} we prove that under CH any $\IR$-soft separable compact space of weight $\le\w_1$ with dense set of isolated points is homeomorphic to the Higson compactification of some cellular finitary coarse space. In Section~\ref{s:12soft} we provide some sufficient conditions for $\IR$-softness, in particular, we observe that each Fr\'echet-Urysohn compact Hausdorff space with dense set of isolated points is $\IR$-soft. 
In Section~\ref{s:nonsoft} we present some examples of compactifications which are not soft. In final Section~\ref{s:Hcor} we address the problem of recognizing the Higson coronas of finitary coarse spaces and prove that a compact Hausdorff space $K$ is homeomorphic to the Higson corona of some finitary coarse space if  $K$ is perfectly normal (more generally, is a corona of character $\chi(K)<\mathfrak p$). Moreover, we prove that under CH, each (zero-dimensional) compact Hausdorff space of weight $\le\w_1$ is homeomorphic to the Higson (resp. binary) corona of some cellular finitary coarse space. 
\smallskip

\section{Coarse spaces}\label{s:def}

In this section we recall the necessary definitions from Asymptotic Topology (see \cite{PB}, \cite{PZ} and \cite{Roe} for more details). 

For a set $X$, any subset $E\subseteq X\times X$ containing the diagonal $\Delta_X:=\{(x,x):x\in X\}$ of $X\times X$ is called an {\em entourage} on $X$. For entourages $E,F$ on $X$ the sets
$$
\begin{aligned}
&E^{-1} :=\{(y,x):(x,y)\in E\}\mbox{ and }\\
&E\circ F:=\{(x,z):\exists y\in X\mbox{ with } (x,y)\in E\mbox{ and }(y,z)\in F\}
\end{aligned}
$$are entourages on $X$. An entourage $E$ on $X$ is called {\em cellular} if $E$ is an equivalence relation, i.e., $E=E^{-1}=E\circ E$.

For an entourage $E\subseteq X\times X$ and point $x\in X$ the set $E[x]:=\{y\in X:(x,y)\in E\}$ is called the {\em $E$-ball centered at} $x$.  For a subset $A\subseteq X$ the set $E[A]=\bigcup_{a\in A}E[a]$ is the {\em $E$-neighborhood} of $A$.

A {\em ballean} is a pair $(X,\E)$ consisting of a set $X$ and a family $\E$ of entourages on $X$ satisfying two axioms:
\begin{itemize}
\item $\bigcup\E=X\times X$.
\item for any sets $E,F\in\E$ there exists a set $G\in\E$ such that $E\circ F^{-1}\subseteq G$.
\end{itemize}
The family $\E$ is called the {\em ball structure} of the ballean $(X,\E)$. 

A ballean $(X,\E)$ is called a {\em coarse space} if for any sets $\Delta_X\subseteq E\subseteq F\subseteq X\times X$ the inclusion $F\in\E$ implies $E\in\E$. In this case the family $\mathcal E$ is called the {\em coarse structure} of the coarse space $(X,\E)$.

For a coarse space $(X,\E)$ a subfamily $\mathcal B\subseteq \mathcal E$ is called a {\em base} of $\E$ if each entourage $E\in\E$ is contained in some entourage $B\in\mathcal B$. It is easy to see that any base $\mathcal B$ of a coarse structure is a ball structure. Conversely, each ball structure $\mathcal B$ on a set $X$ is a base of a unique coarse structure
$${\downarrow}\mathcal B:=\{E:\exists B\in\mathcal B\;\;\Delta_X\subseteq E\subseteq B\}.$$

A coarse space is called {\em cellular} if its coarse structure has a base consisting of cellular entourages. More information on cellular coarse spaces can be found in \cite[\S3.1]{PZ}.

A subset $B\subseteq X$ of a coarse space $(X,\E)$ is called {\em bounded} if $B\subseteq E[x]$ for some $E\in\E$ and some $x\in X$. The family of all bounded subsets is called the {\em bornology} of the coarse space. 

Two subsets $A,B\subseteq X$ of a coarse space $(X,\E)$ are called {\em asymptotically separated} if for any entourage $E\in\E$ the intersection $E[A]\cap E[B]$ is bounded. 

Now we consider two important examples of coarse spaces.

\begin{example} Each metric space $(X,d)$ carries the canonical coarse structure $\E_d$ generated by the countable base consisting of the entourages
$$E_n:=\{(x,y)\in X\times X:d(x,y)\le n\}\mbox{ \ where $n\in\w$}.$$
\end{example}

\begin{definition} A coarse space $(X,\E)$ is called {\em metrizable} if $\E=\E_d$ for some metric $d$ on $X$. 
\end{definition}

The following characterization of metrizable coarse spaces can be found in \cite[2.1.1]{PZ}.

\begin{theorem} A coarse space $X$ is metrizable if and only if its coarse structure has a countable base.
\end{theorem}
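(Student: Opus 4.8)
The plan is to prove the two implications separately; the forward one is immediate and the converse carries all the content. If $(X,\E)$ is metrizable then $\E=\E_d$ for some metric $d$, and the entourages $E_n=\{(x,y):d(x,y)\le n\}$, $n\in\w$, of the Example above form a countable base of $\E_d=\E$. So I concentrate on the converse.

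Assume $\E$ has a countable base $\{B_n:n\in\w\}$. First I would normalize it. Replacing each $B_n$ by $\Delta_X\cup B_n\cup B_n^{-1}$ makes every $B_n$ symmetric and containing the diagonal, and replacing $B_n$ by $\bigcup_{i\le n}B_i$ makes the sequence increasing with $B_0=\Delta_X$. Using that $\E$ is closed under inverses and compositions up to domination (a consequence of the ballean axiom providing, for $E,F\in\E$, some $G\in\E$ with $E\circ F^{-1}\subseteq G$), I would then pass to a cofinal subsequence to secure the \emph{growth condition}
$$B_n\circ B_n\circ B_n\subseteq B_{n+1}\qquad(n\in\w).$$
This changes neither the coarse structure $\E$ nor the fact that $\{B_n\}$ is a base of it.

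Next I would turn this base into a metric by chaining. For $x\ne y$ set $w(x,y)=2^{m}$ with $m=\min\{k:(x,y)\in B_k\}$, and put $w(x,x)=0$; then define
$$d(x,y)=\inf\Big\{\textstyle\sum_{i=0}^{k-1}w(x_i,x_{i+1}):k\ge1,\ x=x_0,x_1,\dots,x_k=y\Big\}.$$
By construction $d$ is symmetric, satisfies the triangle inequality, and $d(x,y)\le w(x,y)$. Since $B_0=\Delta_X$, every nondegenerate step has weight $\ge 2$, so any chain joining distinct points has $d$-length $\ge 2$; hence $d$ separates points and is a genuine metric. The crux is a Frink-type estimate: \emph{if $\sum_i w(x_i,x_{i+1})\le 2^{n}$ along a chain $x_0,\dots,x_k$, then $(x_0,x_k)\in B_{n+1}$.} I would prove this by induction on $k$, splitting the chain at a vertex so that each of the two subchains has $w$-length at most $2^{n-1}$ and the bridging edge has weight at most $2^n$; the two subchains land in $B_n$ by the inductive hypothesis, the bridging edge lies in $B_n$ directly, and the growth condition then yields $(x_0,x_k)\in B_n\circ B_n\circ B_n\subseteq B_{n+1}$.

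With this estimate the identity $\E=\E_d$ drops out. On one hand $(x,y)\in B_n$ forces $d(x,y)\le w(x,y)\le 2^n$, so each $B_n$ lies in $\E_d$ and hence $\E\subseteq\E_d$. On the other hand, given $N\in\w$ pick $n$ with $2^n>N$; then $d(x,y)\le N$ implies $(x,y)\in B_{n+1}$ by the estimate, so each basic entourage $\{(x,y):d(x,y)\le N\}$ of $\E_d$ lies in $\E$, giving $\E_d\subseteq\E$. The only delicate point is the inductive chaining estimate, which is where the triple growth condition is used and which requires care in the choice of the splitting vertex; everything else is bookkeeping.
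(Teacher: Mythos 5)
Your proof is correct, but note that the paper does not actually prove this theorem: it only quotes it from \cite[2.1.1]{PZ}, so the comparison below is with the standard argument in the coarse/ballean literature rather than with an in-paper proof. Your route is the Frink--Alexandroff--Urysohn metrization scheme from the theory of \emph{uniform} spaces: normalize the base, secure $B_n\circ B_n\circ B_n\subseteq B_{n+1}$, define weights, take the chain infimum, and prove the splitting estimate. All steps check out (the growth condition follows from two applications of the ballean axiom; chain sums are integers, so the infimum is attained; the splitting induction is the standard one). However, this machinery is only needed at small scales, where a metric must refine arbitrarily fine entourages; in the \emph{coarse} setting a much lighter argument suffices and is the one usually given. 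Namely, choose an increasing symmetric cofinal sequence $(F_n)_{n\in\w}$ in $\E$ with $F_0=\Delta_X$ and merely $F_n\circ F_n\subseteq F_{n+1}$, and set $d(x,y):=\min\{n\in\w:(x,y)\in F_n\}$. Symmetry is clear, $d(x,y)=0$ iff $x=y$, and the triangle inequality follows from $F_a\circ F_b\subseteq F_{\max(a,b)}\circ F_{\max(a,b)}\subseteq F_{\max(a,b)+1}$: for pairwise distinct points $d(x,y),d(y,z)\ge 1$, hence $d(x,z)\le\max(d(x,y),d(y,z))+1\le d(x,y)+d(y,z)$, and the degenerate cases are trivial. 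Since $\{(x,y):d(x,y)\le n\}=F_n$, the structures $\E_d$ and $\E$ have a common base, so $\E=\E_d$ with no chain lemma and no triple composition. Your heavier approach buys nothing extra here, since coarse equivalence is insensitive to small-scale behaviour, but it is sound. Two cosmetic slips: replacing $B_n$ by $\bigcup_{i\le n}B_i$ does not by itself make $B_0=\Delta_X$ (you must prepend $\Delta_X$, which belongs to $\E$ by downward closedness), and consequently nondegenerate steps a priori have weight $\ge 1$ rather than $\ge 2$ --- which is still enough for $d$ to separate points, so correctness is unaffected.
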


A coarse space is called {\em ordinal} if its coarse structure has a well-ordered base.  
In particular, each metrizable coarse space is ordinal.
\smallskip

We recall that a {\em $G$-space} is a set $X$ endowed with an action of a group $G$ (which can be identified with a subgroup of the permutation group $S_X$ of $X$).

\begin{example}\label{ex2} Each $G$-space $X$ carries the canonical coarse structure $\E_G$ generated by the base consisting of the entourages
$$E_F:=\{(x,y)\in X\times X:y\in\{x\}\cup\{g(x)\}_{g\in F}\}$$where $F$ runs over finite subsets of the acting group $G$. 
\end{example}

Coarse spaces whose coarse structure is generated by the action of some group can be characterized as finitary coarse spaces.

\begin{definition} A coarse space $(X,\E)$ is called {\em finitary} if $$\sup\{|E[x]|:x\in X\}<\w$$ for all $E\in\E$.
\end{definition}

 It is clear that the bornology of any finitary coarse space $(X,\E)$ coincides with the family $[X]^{<\w}$ of all finite subsets of $X$. The following characterization of finitary coarse spaces was proved by Protasov in \cite{Prot} (see also \cite{PP} and \cite{P19}).

\begin{theorem}\label{t:Prot} A coarse space $(X,\E)$ is finitary if and only if $\E=\E_G$ for some subgroup $G\subseteq S_X$.
\end{theorem}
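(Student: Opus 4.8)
The plan is to prove the nontrivial implication by exhibiting the group explicitly as the group of all $\E$-controlled permutations. For $\pi\in S_X$ write $\Gamma_\pi:=\{(x,\pi(x)):x\in X\}$ for its graph and set $G:=\{\pi\in S_X:\Gamma_\pi\cup\Delta_X\in\E\}$. First I would check that $G$ is a subgroup of $S_X$: the identity lies in $G$ since $\Gamma_{\mathrm{id}}=\Delta_X\in\E$; $G$ is closed under inversion because $\Gamma_{\pi^{-1}}\cup\Delta_X=(\Gamma_\pi\cup\Delta_X)^{-1}$ and $\E$ is closed under inverses; and $G$ is closed under composition because $\Gamma_{\pi\circ\sigma}\subseteq(\Gamma_\sigma\cup\Delta_X)\circ(\Gamma_\pi\cup\Delta_X)$, which is contained in an entourage by the ballean axiom, so $\Gamma_{\pi\circ\sigma}\cup\Delta_X\in\E$ by downward closure. (Here I use the standard fact, derivable from the two axioms, that $\E$ is closed under inverses, finite unions and compositions up to enlargement.)

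The implication ``$\E=\E_G\Rightarrow$ finitary'' is immediate, since $E_F[x]=\{x\}\cup\{g(x):g\in F\}$ gives $|E_F[x]|\le|F|+1$, so every $E\in\E_G$ has uniformly bounded balls. In the reverse direction the inclusion $\E_G\subseteq\E$ is equally quick: each $E_F=\Delta_X\cup\bigcup_{g\in F}\Gamma_g$ is a finite union of entourages of $\E$ and hence lies in $\E$, so ${\downarrow}\{E_F:F\in[G]^{<\w}\}=\E_G\subseteq\E$ by downward closure.

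The heart of the matter is the inclusion $\E\subseteq\E_G$, i.e.\ that every $E\in\E$ is covered by $\Delta_X$ together with finitely many graphs of controlled permutations. Given $E\in\E$, I would first pass to the symmetric entourage $E':=E\cup E^{-1}\in\E$, which is still finitary, and put $n:=\sup_x|E'[x]|<\w$. View the off-diagonal part $R:=E'\setminus\Delta_X$ as the edge set of a simple graph $\Gamma$ on the vertex set $X$; symmetry of $E'$ makes this well defined, and each vertex has degree at most $n-1$. Now I would properly edge-colour $\Gamma$ with finitely many colours: well-order the edge set and colour greedily, noting that each edge meets at most $2(n-1)$ other edges, so $2n-1$ colours suffice. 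Each colour class $M_1,\dots,M_k$ (with $k\le 2n-1$) is a matching, i.e.\ a set of pairwise disjoint edges, and determines an involution $g_j\in S_X$ swapping the two endpoints of every edge of $M_j$ and fixing all other points. Since $\Gamma_{g_j}\subseteq E'$, each $g_j$ lies in $G$, and $E\subseteq E'=\Delta_X\cup\bigcup_j\Gamma_{g_j}=E_F$ for $F:=\{g_1,\dots,g_k\}\in[G]^{<\w}$. Hence $E\in\E_G$, completing the proof.

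The main obstacle is the edge-colouring step, because $\Gamma$ may be uncountable, so the finite Vizing/K\H{o}nig theorems cannot be invoked directly. I expect to handle this by the transfinite greedy colouring just described, which needs only a well-ordering of the edge set and the uniform degree bound supplied by finitariness; passing from colour classes to \emph{involutions} is then exactly what makes the extension of partial matchings to genuine permutations automatic, sidestepping any cardinality bookkeeping between the unused points of the domain and those of the range.
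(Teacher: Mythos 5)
Your proof is correct, but there is nothing in the paper to compare it with: the paper does not prove Theorem~\ref{t:Prot} at all, it simply quotes it from Protasov's work (the references \cite{Prot}, \cite{PP}, \cite{P19}). So you have in fact supplied a self-contained argument for a result the paper treats as a black box. Your route is in the same general spirit as the cited literature, where a finitary entourage is decomposed into finitely many partial bijections (there typically via Hall/K\H onig-type matching theorems) which must then be extended to genuine permutations; your version replaces that machinery by two cheaper devices, both of which check out. First, the closure properties of $\E$ you invoke (inverses, finite unions, compositions up to enlargement) do follow from the ballean axioms plus downward closure: $F^{-1}=\Delta_X\circ F^{-1}$ is contained in an element of $\E$, and $E\cup F\subseteq E\circ F$ because both entourages contain the diagonal; with the paper's convention $E\circ F=\{(x,z):\exists y\;(x,y)\in E,\ (y,z)\in F\}$ your containment $\Gamma_{\pi\circ\sigma}\subseteq(\Gamma_\sigma\cup\Delta_X)\circ(\Gamma_\pi\cup\Delta_X)$ has the factors in the right order. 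Second, the transfinite greedy edge-colouring is legitimate precisely for the reason you give: adjacency in your graph is a \emph{finite} condition (each edge meets at most $2(n-2)$ others, so in fact $2n-3$ colours already suffice; your $2n-1$ is a harmless overcount), hence at every stage of the recursion only finitely many colours are forbidden, independently of the order type of the well-ordering. Passing to involutions that swap the endpoints of each matching is exactly what eliminates the cardinality bookkeeping needed when one extends arbitrary partial injections to permutations, so your proof is, if anything, more elementary than the standard one.
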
 

\section{The $M$-compactification and $M$-corona of a coarse space}\label{s:H}

Let  $(M,d_M)$ be a proper metric space. The properness of $M$ means that $|M|\ge 2$ and each closed bounded set in $M$ is compact. For a non-empty subset $A\subseteq M$ let $$\diam(A):=\sup\{d_M(x,y):x,y\in A\}$$be the {\em diameter} of the set $A$ in $M$. 

A function $\varphi:X\to M$ on a coarse space $(X,\E)$ is called {\em slowly oscillating} if for every $\e>0$ and $E\in\E$ there exists a bounded set $B\subseteq X$ such that $\diam\, \varphi(E[x])<\e$ for all $x\in X\setminus B$. 

A coarse space $(X,\E)$ is called {\em $M$-normal} if for any disjoint and asymptotically separated nonempty sets $A,B\subseteq X$ there exists a slowly oscillating function $\varphi:X\to M$ such that $$\inf\{d_M(\varphi(a),\varphi(b)):a\in A,\;b\in B\}>0.$$
$\IR$-Normal coarse spaces are called {\em normal}. They were introduced by Protasov \cite{Prot03} and studied in \cite{BP}. By \cite{Prot03}, each metrizable (and cellular) coarse space is $\IR$-normal (and $\mathsf 2$-normal). Here by $\mathsf 2$ we denote the doubleton $\{0,1\}$ endowed with the (unique) $\{0,1\}$-valued metric.

Given a coarse space $(X,\E)$, denote by $\so(X,\E;M)$ the family of all bounded slowly oscillating $M$-valued functions on $X$. It can be shown that the function 
$$\delta_M:X\to M^{\so(X,\E;M)},\;\;\delta:x\mapsto (f(x))_{f\in\so(X,\E;M)},$$is injective. The properness of the metric space $M$ ensures that the closure $\overline{\delta_M(X)}$ of $\delta_M(X)$ in $M^{\so(X,\E;M)}$ is compact. 

The compact Hausdorff space $h_M(X,\E):=\overline{\delta_M(X)}$ is called the {\em $M$-compactification} of the coarse space $(X,\E)$. The injectivity of the map $\delta_M:X\to h_M(X,\E)$ allows us to identify $X$ with its image $\delta_M(X)$ in $h_M(X,\E)$. Observe that a function $f:X\to M$ is slowly oscillating if it is constant outside of some bounded set in $X$. This observation implies that  $X$ coincides with the set of isolated points of the compact space $h_M(X,\E)$, and for any bounded set $B\subseteq X$ its closure $\bar B$ in $h_M(X,\E)$ is open. Consequently, the set
$$\hbar_M(X,\E):=h_M(X,\E)\setminus\bigcup\{\bar B:B\subseteq X\mbox{ is bounded}\},$$
called the {\em $M$-corona} of $(X,\E)$, is compact. If all bounded sets in $X$ are finite, then the $M$-corona $\hbar_M(X,\E)$ coincides with the remainder $h_M(X,\E)\setminus X$ of the $M$-compactification of $X$.

The $\IR$-compactification $h_\IR(X,\E)$ and the $\IR$-corona $\hbar_\IR(X,\E)$ of a coarse space $(X,\E)$ were introduced by Higson \cite{Higson} and are called the {\em Higson compactification} and the {\em Higson corona} of $(X,\E)$, respectively. 

For the doubleton $\mathsf 2:=\{0,1\}$, the $\mathsf 2$-compactification $h_{\mathsf 2}(X,\E)$ and the $\mathsf 2$-corona $\hbar_{\mathsf 2}(X,\E)$ of a coarse space $(X,\E)$ were introduced by Protasov \cite{Prot_bc}, \cite{Prot05} and are called the {\em binary compactification} and the {\em binary corona} on $(X,\E)$, respectively.  Observe that the binary compactification (and the binary corona) of a coarse space is zero-dimensional. In Geometric Group Theory, the binary corona is known as the space of ends, see \cite{Yves}.
 
It is known that for any unbounded metrizable coarse space $(X,\E)$, the Higson corona $\hbar_\IR(X,\E)$ contains a copy of the  remainder $\beta\IN\setminus\IN$ of the Stone-Chech compactification of the discrete space $\IN$ of positive integers, so has rather complicated topological structure.

In the following sections we shall discuss the following 

\begin{problem}\label{prob0a} Which compact Hausdorff spaces are homeomorphic to the $M$-compactification (or the $M$-corona) of a finitary coarse space?
\end{problem}

\section{Compactifications and their permutation coarse spaces}\label{s:P}




Let $M$ be a proper metric space. 
In order to answer Problem~\ref{prob0a}, observe that for any coarse metric space $X$ its $M$-compactification is a compact Hausdorff space containing $X$ as a dense discrete subspace. 

Motivated by this observation, we define a compact Hausdorff space $K$ to be  {\em a compactification} if the set $K'$ of isolated points of $K$ is dense in $K$. So, $K$ is indeed a compactification of the discrete space $K'$. In particular, the $M$-compactification $h_M(X,\E)$ of a coarse space $(X,\E)$ is a compactification (of the discrete space $X$). 

For a compact Hausdorff space $K$ by $C(K;M)$ we denote the family of all continuous functions from $K$ to $M$. For a subset $X\subset K$, let
$$\uc(X,K;M):=\{f{\restriction}X:f\in C(K;M)\}.$$

We shall say that two compactifications $K_1,K_2$ of the same set $X=K'_1=K'_2$ are 
\begin{itemize}
\item {\em equivalent} if there exists a homeomorphism $h:K_1\to K_2$ such that $h(x)=x$ for any $x\in X$;
\item {\em $M$-equivalent} if $\uc(X,K_1;M)=\uc(X,K_2;M)$.
\end{itemize}

\begin{proposition} Let $K_1,K_2$ be two compactifications of the same set $X=K_1'=K_2'$.
\begin{enumerate}
\item If $K_1$ and $K_2$ are equivalent, then they are $M$-equivalent;
\item If $K_1$ and $K_2$ are homeomorphic to subspaces of $M^{\kappa}$ for some cardinal $\kappa$, then $K_1$ and $K_2$ are equivalent if and only if they are $M$-equivalent;
\item $K_1,K_2$ are equivalent if and only if they are $\IR$-equivalent;
\item If $K_1,K_2$ are zero-dimensional, then they are equivalent if and only if they are $\mathsf 2$-equivalent.
\end{enumerate}
\end{proposition}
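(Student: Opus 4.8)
The plan is to handle the four items in order, concentrating the real work in item~(2); items~(1), (3) and (4) then follow formally. For item~(1), I would argue directly: if $h\colon K_1\to K_2$ is a homeomorphism with $h{\restriction}X=\mathrm{id}_X$, then for every $f\in C(K_2;M)$ the composite $f\circ h$ lies in $C(K_1;M)$ and satisfies $(f\circ h){\restriction}X=f{\restriction}X$, whence $f{\restriction}X\in\uc(X,K_1;M)$. This gives $\uc(X,K_2;M)\subseteq\uc(X,K_1;M)$, and applying the same argument to $h^{-1}$ yields the reverse inclusion, so $K_1$ and $K_2$ are $M$-equivalent.

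The heart of the statement is item~(2). One implication is item~(1), so I assume $K_1,K_2$ are $M$-equivalent and each embeds into some power $M^\kappa$, and I aim to reconstruct each $K_i$ canonically from the family $\uc(X,K_i;M)$. Since $X$ is dense in $K_i$ and $M$ is Hausdorff, every $g\in\uc(X,K_i;M)$ has a \emph{unique} continuous extension $\tilde g\colon K_i\to M$, so the evaluation map $E_i\colon K_i\to M^{\uc(X,K_i;M)}$, $p\mapsto(\tilde g(p))_{g}$, is well defined and continuous. The assumed embeddability of $K_i$ into $M^\kappa$ guarantees that $C(K_i;M)$ separates the points of $K_i$ (the coordinate projections of such an embedding are separating continuous $M$-valued functions), so $E_i$ is injective; being a continuous injection from a compact space into a Hausdorff space, $E_i$ is a topological embedding. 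Since $E_i{\restriction}X$ is the evaluation $e_i\colon x\mapsto(g(x))_g$ and $X$ is dense, $E_i(K_i)=\overline{e_i(X)}$, so $K_i$ is equivalent, as a compactification of $X$, to the closure $\overline{e_i(X)}$ in $M^{\uc(X,K_i;M)}$. Now $M$-equivalence means $\uc(X,K_1;M)=\uc(X,K_2;M)$, so $e_1$ and $e_2$ coincide as maps into one and the same product, giving $\overline{e_1(X)}=\overline{e_2(X)}$; composing the two equivalences produces the desired homeomorphism $K_1\to K_2$ fixing $X$.

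Items~(3) and (4) then reduce to item~(2) through classical embedding theorems. For item~(3) I would take $M=\IR$: one direction is item~(1), and for the other I invoke the fact that every compact Hausdorff space embeds into a Tychonoff cube $[0,1]^\kappa\subseteq\IR^\kappa$, so the embeddability hypothesis of item~(2) holds automatically. For item~(4) I would take $M=\mathsf 2$, which is a proper metric space since it is finite: again item~(1) gives one direction, while in the other the hypothesis that $K_1,K_2$ are zero-dimensional compact Hausdorff spaces is exactly what lets me embed each into a Cantor cube $\mathsf 2^\kappa$, so that item~(2) applies.

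The step I expect to be the main obstacle is verifying in item~(2) that $E_i$ is a topological embedding; the argument hinges on using the embeddability assumption correctly to ensure that $C(K_i;M)$ separates the points of $K_i$, since otherwise there would be too few $M$-valued functions to recover the topology of $K_i$ from the restricted family $\uc(X,K_i;M)$.
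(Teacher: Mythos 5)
Your proof is correct, and its overall skeleton (item (1) by composing with the homeomorphism, items (3) and (4) reduced to item (2) via the Tychonoff-cube and Cantor-cube embedding theorems) matches the paper; the difference lies in how item (2) is executed. The paper works directly with the two given embeddings $\delta_i:K_i\to M^\kappa$: it restricts the coordinate projections $\pi_\alpha$ to $K_2$, notes that $\pi_\alpha{\restriction}X\in\uc(X,K_2;M)=\uc(X,K_1;M)$ extends continuously over $K_1$, assembles these extensions into a continuous map $f:K_1\to M^\kappa$ with $f(K_1)=K_2$ by density, builds the symmetric map $g:K_2\to K_1$, and concludes that $f$ and $g$ are mutually inverse because $f\circ g$ and $g\circ f$ fix the dense set $X$. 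You instead build, for each $K_i$, the canonical evaluation $E_i:K_i\to M^{\uc(X,K_i;M)}$ indexed by the \emph{whole} function family, use the embedding hypothesis only to extract point-separation (hence injectivity of $E_i$, hence that $E_i$ is an embedding by compactness), and then observe that $M$-equivalence forces $\overline{e_1(X)}=\overline{e_2(X)}$, so $E_2^{-1}\circ E_1$ is the required equivalence. The trade-off: your route produces a canonical common model of the compactification (closer in spirit to the construction of $h_M(X,\E)$ itself, and making explicit that $\uc(X,K;M)$ determines $K$), at the cost of invoking point-separation and the compact-to-Hausdorff embedding lemma; the paper's route avoids the large function-indexed product and never needs injectivity as a separate step, since it falls out of the existence of the two mutually inverse maps. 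Both arguments rest on the same two pillars: unique continuous extension over a compactification by density of $X$, and compactness to upgrade pointwise agreement to a homeomorphism.
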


\begin{proof} 1. If $K_1,K_2$ are equivalent, then there exists a homeomorphism $h:K_1\to K_2$ such that $h(x)=x$ for all $x\in X$. Given any function $\varphi\in\uc(X,K_2;M)$, find a continuous function $\bar \varphi:K_2\to M$ such that $\varphi=\bar\varphi{\restriction}X$. It follows that the function $\psi=\bar\varphi\circ h:K_1\to M$ is continuous and its restriction $\varphi=\psi{\restriction}X$ belongs to the family $\uc(X;K_1;M)$. This proves the inclusion $\uc(X,K_2;M)\subseteq  \uc(X,K_1;M)$. By analogy we can prove that $\uc(X,K_1;M)\subseteq \uc(X,K_2;M)$.
So the compactifications $K_1,K_2$ are $M$-equivalent.
\smallskip

2. Now assume that $K_1,K_2$ are $M$-equivalent and for every $i\in\{1,2\}$ the compact space $K_i$ admits a topological embedding $\delta_i:K_i\to M^{\kappa}$, where $\kappa$ is a suitable cardinal. We claim that the map $\delta_1{\restriction}X:X\to K_2\subset M^\kappa$ has a continuous extension $K_1\to K_2$. For every $\alpha\in\kappa$ consider the coordinate projection $\pi_\alpha:M^\kappa\to M$ onto the $\alpha$th coordinate. Observe that the restriction $\pi_\alpha{\restriction}K_2:K_2\to M$ is continuous and  the restriction $\pi_\alpha{\restriction}X\in \uc(X,K_2;M)=\uc(X,K_1;M)$ admits a unique continuous extension $\bar\pi_\alpha:K_1\to M$. Then the map $f=(\bar\pi_\alpha)_{\alpha\in\kappa}:K_1\to M^\kappa$ is a continuous extension of the map $\delta_1{\restriction}X$. The density of $X$ in the compact spaces $K_1,K_2$ ensures that $f(K_1)=K_2$. By analogy we can prove that the identity map $X\to K_1$ extends to a continuous map $g:K_1\to K_2$. Taking into account that $f\circ g(x)=g\circ f(x)=x$ for all $x\in X$, we conclude that $f:K_1\to K_2$ is a homeomorphism witnessing that the compactifications $K_1,K_2$ are equivalent. 
\smallskip

3,4. The last two statements follow immediately from the second statement.
\end{proof}

For a compact Hausdorff space $K$ let $X$ be the set of isolated points of $K$ and $\Homeo(K,X)$ be the group of homemorphisms $g$ of $K$ such that $g(x)=x$ for all $x\in K\setminus X$. The group $\Homeo(K,X)$ induces the subgroup
$$G_{X,K}:=\{g{\restriction}X:g\in\Homeo(K,X)\}$$
in the permutation group $S_{X}$ of the set $X$. The group $G_{X,K}$ will be called the {\em permutation group} of the compactification $K$.

The action of the group $G_{X,K}$ on the set $X$ determines the coarse structure $\E_{X,K}:=\E_{G_{X,K}}$ on the set $X$. The coarse space $(X,\E_{X,K})$ will be called the {\em permutation coarse space} of the compact Hausdorff space $K$.  

The construction of the permutation coarse space rises two problems, the first of which is related to Problems~\ref{prob0} and \ref{prob0a}.

\begin{problem}\label{prob1} Characterize compactifications  that are equivalent to the Higson compactifications of their permutation coarse spaces.
\end{problem}

\begin{problem}\label{prob2} Characterize coarse spaces that coincide with the permutation coarse spaces of their Higson compactifications.
\end{problem}

\section{The $M$-compactifications of finitary metrizable coarse spaces}\label{s:M}

A partial answer to Problem~\ref{prob2} is given by the following theorem.

\begin{theorem}\label{t:M} Let $M$ be a proper metric space, $(X,\E)$ be a coarse space,  $G:=G_{X,\bar X}$ be the permutation group of the $M$-compactification $\bar X:=h_M(X,\E)$ of $X$, and $\E_G$ be the coarse structure on $X$ generated by the action of the group $G$.\begin{enumerate}
\item If $(X,\E)$ is finitary, then $\E\subseteq\E_G$.
\item If $(X,\E)$ is metrizable and $M$-normal, then $\E_G\subseteq\E$.
\item If $(X,\E)$ is finitary, metrizable and $M$-normal, then $\E=\E_G$.
\end{enumerate}
\end{theorem}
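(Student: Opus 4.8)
Statement (3) is immediate from (1) and (2), so everything reduces to the two inclusions, and both rest on one lemma: \emph{if a bijection $g\colon X\to X$ satisfies $\{(x,g(x)):x\in X\}\subseteq E$ for some $E\in\E$, then $g\in G$}. I would prove this by checking that precomposition $f\mapsto f\circ g$ preserves $\so(X,\E;M)$: for every $E'\in\E$ one has $g(E'[x])\subseteq (E'\circ E)[x]$, so $\diam(f\circ g)(E'[x])\le\diam f\big((E'\circ E)[x]\big)\to 0$, and symmetrically for $g^{-1}$. Hence $g$ induces the coordinate permutation $\xi\mapsto(\xi_{f\circ g})_{f\in\so(X,\E;M)}$ of $M^{\so(X,\E;M)}$; it maps $\delta_M(X)$ onto itself and therefore restricts to a homeomorphism $\bar g$ of $\bar X=h_M(X,\E)$ extending $g$. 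Moreover $\bar g$ fixes the corona pointwise: if $\xi\in\bar X\setminus X$ is a limit of a net $(x_\alpha)$ leaving every bounded set, slow oscillation gives $d_M(f(x_\alpha),f(g(x_\alpha)))\to 0$ for each $f\in\so(X,\E;M)$, so $\xi_{f\circ g}=\xi_f$, i.e.\ $\bar g(\xi)=\xi$. Thus $\bar g\in\Homeo(\bar X,X)$ and $g\in G$.

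Granting the lemma, (1) becomes a graph-colouring argument. Assuming $(X,\E)$ finitary, I fix a symmetric $E\in\E$ with $\Delta_X\subseteq E$; finitariness means the graph $\Gamma=(X,\,E\setminus\Delta_X)$ has finite maximum degree $d$. By Vizing's theorem together with the de Bruijn--Erd\H{o}s compactness theorem (applied to the line graph of $\Gamma$), $\Gamma$ has a proper edge-colouring with colours $1,\dots,d+1$. Each colour class is a matching, which I extend by the identity on unmatched points to an involution $g_i$ satisfying $\{(x,g_i(x)):x\in X\}\subseteq E$; by the lemma $g_i\in G$. Since every edge of $\Gamma$ is coloured, $E\subseteq E_{\{g_1,\dots,g_{d+1}\}}\in\E_G$, whence $E\in\E_G$ and $\E\subseteq\E_G$.

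For (2) I would show that every $g\in G$ is a bounded perturbation. As $\E$ is closed under sub-entourages and finite unions and $\E_G$ is generated by the entourages $E_F$, it suffices to prove $E_g:=\Delta_X\cup\{(x,g(x)):x\in X\}\in\E$. I fix a metric $d$ with $\E=\E_d$ and suppose not, so $\sup_x d(x,g(x))=\infty$; then I pick $a_n$ with $d(a_n,g(a_n))\to\infty$ and set $b_n=g(a_n)$, $A=\{a_n\}$, $B=\{b_n\}$. The homeomorphism $\bar g$ fixes the corona $\hbar_M(X,\E)$ and hence preserves both it and its complement $\bigcup\{\bar B:B\text{ bounded}\}$; since $\bar g(\bar A)=\bar B$, this yields $\bar A\cap\hbar_M(X,\E)=\bar B\cap\hbar_M(X,\E)$. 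If some subsequence made $\{a_n\}$ or $\{b_n\}$ bounded, then (using $d(a_n,b_n)\to\infty$) the other side would be unbounded while its corona-closure equalled the empty corona-closure of the bounded side --- impossible, as the closure of an unbounded set meets the corona. So, after passing to a subsequence, both $\{a_n\}$ and $\{b_n\}$ leave every bounded set.

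A recursive selection --- choosing indices $n_k$ with $a_{n_k},b_{n_k}$ at $d$-distance $>k$ from the previously chosen points and $d(a_{n_k},b_{n_k})>2k$ --- then yields refined sets $A=\{a_{n_k}\}$, $B=\{b_{n_k}\}$ that are disjoint, unbounded and \emph{asymptotically separated}, with $g(A)=B$. On one hand the previous paragraph gives $\bar A\cap\hbar_M(X,\E)=\bar B\cap\hbar_M(X,\E)\neq\varnothing$. On the other hand, $M$-normality provides a slowly oscillating $\varphi\colon X\to M$ with $\inf_{A\times B}d_M(\varphi(a),\varphi(b))>0$, and such a $\varphi$ (once arranged to be bounded) extends over $h_M(X,\E)$ and separates the corona-closures of $A$ and $B$, contradicting the displayed equality. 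This forces $E_g\in\E$, so $\E_G\subseteq\E$, and with (1) proves (3). The main obstacle is precisely this last bridge --- that asymptotically separated sets have disjoint corona-closures --- whose one delicate point is that the separating slowly oscillating function must be taken \emph{bounded} in order to extend over $h_M(X,\E)$; this is automatic for $M=\IR$ (truncate) and for $M=\mathsf 2$ (already bounded), and requires a short separate argument for a general proper $M$. The recursive construction of the asymptotically separated pair is the other step demanding care.
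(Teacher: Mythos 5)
Your proposal is correct in overall structure, but it takes a genuinely different route from the paper's in both halves. For (1) the paper invokes Protasov's characterization of finitary coarse structures as group-generated ones (Theorem~\ref{t:Prot}: $\E=\E_H$ for a subgroup $H\subseteq S_X$) and then shows $H\subseteq G$; you instead decompose each symmetric entourage into at most $d+1$ involutions via Vizing plus de Bruijn--Erd\H{o}s, which makes the argument self-contained (in effect you re-prove exactly the piece of Protasov's theorem that is needed). Your key lemma --- a bijection whose graph lies in an entourage extends to a homeomorphism of $\bar X$ fixing the remainder --- is the same content as the paper's computation that $\varphi\circ g$ extends continuously with the same boundary values as $\bar\varphi$; your precomposition-of-coordinates formulation of it is arguably cleaner. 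For (2) the paper runs a contradiction argument with a finite set $F\subseteq\Homeo(\bar X,X)$, an inductive choice of points $x_n$ and permutations $g_n\in F$, and a covering of the remainder by neighborhoods $U_z\subseteq O_z$; your reduction to a single $g\in G$ (legitimate, since $\E$ is closed under sub-entourages and finite unions and $E_F=\bigcup_{g\in F}E_g$) together with the identity $\bar A\cap\hbar_M(X,\E)=\bar B\cap\hbar_M(X,\E)$, coming from the fact that $\bar g$ fixes the remainder pointwise, is a simplification of the same underlying mechanism.

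Two caveats. First, your lemma is false as stated for general coarse spaces: if $X$ has an infinite bounded set $B$, then $\bar B$ is essentially $\beta B$, every bijection supported on $B$ has its graph inside an entourage, yet its (unique) continuous extension to $\bar X$ need not fix $\bar B\setminus B$ pointwise --- for instance, for the bounded coarse structure consisting of \emph{all} entourages one gets $\bar X=\beta X$ and $G=FS_X$, while every bijection of $X$ satisfies your hypothesis. The unjustified step is the tacit claim that every point of $\bar X\setminus X$ is a limit of a net leaving every bounded set. You apply the lemma only when $(X,\E)$ is finitary, where bounded sets are finite and this claim is true, so the gap is repaired simply by adding the finitary hypothesis to the lemma. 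Second, the boundedness of the $M$-normality witness is indeed a delicate point, and you are right to flag it: only \emph{bounded} slowly oscillating functions extend over $h_M(X,\E)$ (the paper silently passes over this as well, so your proof is not worse off here). Note, however, that for $M=\IR$ literal truncation does not work: truncating $\varphi$ at a level $c$ can collapse the gap between $\varphi(A)$ and $\varphi(B)$ when both sets of values exceed $c$. One should instead replace $\varphi$ by $x\mapsto\min\{\mathrm{dist}(\varphi(x),\varphi(A)),\e\}$, which is bounded, slowly oscillating, $\IR$-valued, and still separates $A$ from $B$; for general proper $M$ an $M$-valued bounded separating function still requires an argument, exactly as you say.
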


\begin{proof} 1. Assuming that $(X,\E)$ is finitary, we shall prove that $\E\subseteq\E_G$. By Theorem~\ref{t:Prot}, $\E=\E_H$ for some subgroup $H\subseteq S_X$. We claim that $H\subseteq G_{X,\bar X}$. This will follow from the definition of the group $G_{X,\bar X}$ as soon as we prove that every $g\in H$ has a continuous extension $\bar g:\bar X\to\bar X$ such that $\bar g(x)=x$ for all $x\in \bar X\setminus X$. Since $\bar X$ embeds into the product $M^{\so(X,\E;M)}$ it suffices to prove that for every $\varphi\in\so(X,\E;M)$ the function $\psi:=\varphi\circ h:X\to M$ has a continuous extension $\bar\psi:\bar X\to M$ such that $\bar\psi(x)=x$ for all $x\in\bar X\setminus X$. Such extension $\bar\psi$ exists  if and only if for every $z\in \bar X\setminus X$ and $\e>0$ there exists a neighborhood $O_z\subseteq\bar X$ of $z$ such that $\diam\,\psi (X\cap O_x)<\e$. So, fix any point $z\in\bar X\setminus X$ and $\e>0$.

The permuation $g$ determines the entourage  $$E_g=\{(x,y)\in X\times X:y\in\{x,g(x)\}\}\in\E_H=\E.$$ Since the function $\varphi$ is slowly oscillating, there exists a bounded (and hence finite set) $B\subseteq X$ such that $\diam \,\varphi([E_g[x])<\frac14\e$ for all $x\in X\setminus B$.

By definition of $\bar X$, the slowly oscillating function $\varphi$ admits a continuous extension $\bar\varphi:\bar X\to M$. Then for the neighborhood $O_z:=\{x\in \bar X\setminus B:d_M(\bar\varphi(x),\bar\varphi(z))<\frac14\e\}$ of $z$ we have $\diam\, \psi(X\cap O_z)<\e$.  Indeed, for any $x,y\in X\cap O_z$, we get
$$d_M(\psi(x),\bar\varphi(z))\le d_M(\varphi\circ h(x),\varphi(x))+d_M(\varphi(x),\bar\varphi(z))<\tfrac14\e+\tfrac14\e=\tfrac12\e$$
and then 
$$d_M(\psi(x),\psi(y))\le d_M(\psi(x),\bar\varphi(z))+d_M(\psi(y),\bar\varphi(z))<\tfrac12\e+\tfrac12\e=\e.$$
This completes the proof of the inclusion $H\subseteq G_{X,\bar X}$. The latter inclusion implies $\E=\E_H\subseteq \E_{G_{X,\bar X}}$.
 \smallskip
 
2. Assume that the coarse space $(X,\E)$ is metrizable and $M$-normal. By Theorem~\ref{t:M}, the coarse structure $\E$ has a countable base $\{E_n\}_{n\in\w}\subseteq\E$ such that $E_n\subseteq E_{n+1}$ for all $n\in\w$. Assuming that $\E_G\not\subseteq\E$, we can find a finite subset $F\subseteq \Homeo(\bar X,X)$ such that the entourage $$E_F:=\{(x,y)\in X\times X:y\in\{x\}\cup\{g(x)\}_{g\in F}\}$$ does not belong to the coarse structure $\E$.
In particular, $E_F\not\subseteq E_n^{-1}\circ E_n$ for every $n\in\w$. Replacing $F$ by a larger finite set in $\Homeo(\bar X,X)$, we can assume that $F$ contains the identity homeomorphism of the space $\bar X$.

Take any point $x_0\in X$ and inductively construct a sequence of points $(x_n)_{n\in\w}$ in $X$ and bijections $g_n\in F$ such that for every $n\in\w$ 
$$g_n(x_n)\notin E_n^{-1}\circ E_n[x_{n}]\mbox{ and }x_{n}\notin \bigcup_{k<n}\bigcup_{g\in F}E_n^{-1}\circ E_n[g(x_k)].$$ 
We claim that the sets $A:=\{x_n\}_{n\in\w}$ and $B:=\{g_n(x_n)\}_{n\in\w}$ are asymptotically separated. Since $\{E_n\}_{n\in\w}$ is a base of the coarse structure $\E$, for every entourage $E\in\E$ there exists $n\in\w$ such that $E\subseteq E_n$. The choice of the points $(x_k)_{k\in\w}$ guarantees that $E[A]\cap E[B]\subseteq  E_n[A]\cap E_n[B]\subseteq \bigcup_{i\le n}E_n[x_i]$ is finite.

By the $M$-normality of the coarse space $(X,\E)$, there exists a slowly oscillating function $\varphi:X\to M$ such that $$\e:=\inf\{d_X(\varphi(a),\varphi(b)):a\in A,\;b\in B\}>0.$$ By the definition of the $M$-compactification, the slowly oscillating function $\varphi$ admits a continuous extension $\bar\varphi:\bar X\to M$. 

Since the set $F\subseteq \Homeo(\bar X,X)$ is finite, by the continuity, for every point $z\in \bar X\setminus X$ we can find a neighborhood $O_z\subseteq\bar X$ such that $$d_M(\bar\varphi(g(x)),\bar\varphi(g(z)))<\tfrac12\e$$ for any $g\in F$ and $x\in O_z$. In particular, $d_M(\bar\varphi(x),\bar\varphi(z))<\frac12\e$ for any $x\in O_z$. 

Next, choose an open neighborhood $U_z\subseteq O_z$ such that $g(U_z)\subseteq O_z$ for all $g\in F$. It follows that $U:=\bigcup_{z\in\bar X\setminus X}U_z$ is an open set containing $\bar X\setminus X$ and then $F:=\bar X_\E\setminus U$ is a compact (and hence finite) set in the discrete space $X$. 

Choose any $n\in\w$ with $x_n\notin F$ and find $z\in \bar X_\E\setminus X$ with $x_n\in U_z\subseteq O_z$. Then $g_n(x_n)\in O_z$ and  $$
\begin{aligned}
d_M(\varphi(g_n(x_n)),\varphi(x_n))&\le d_M(\varphi(g_n(x_n)),\bar\varphi(g_n(z)))+d_M(\bar\varphi(g_n(z)),\bar\varphi(z))+d_M(\bar\varphi(z),\varphi(x_n))\\
&<\frac\e2+0+\frac\e2=\e,
\end{aligned}
$$ which contradicts the definition of $\e$. This contradiction completes the proof of the inclusion $\E_G\subseteq\E$.
\smallskip

3. The last statement follows from the first two statements.
\end{proof}

Since each metrizable coarse space is $\IR$-normal \cite{Prot03}, Theorem~\ref{t:M} implies

\begin{corollary}\label{c:R} Let $(X,\E)$ be a coarse space,  $G:=G_{X,\bar X}$ be the permutation group of its Higson compactification $\bar X:=h_\IR(X,\E)$, and $\E_G$ be the coarse structure on $X$ generated by the action of the group $G$.\begin{enumerate}
\item If $(X,\E)$ is finitary, then $\E\subseteq\E_G$.
\item If $(X,\E)$ is metrizable, then $\E_G\subseteq\E$.
\item If $(X,\E)$ is finitary and metrizable, then $\E=\E_G$.
\end{enumerate}
\end{corollary}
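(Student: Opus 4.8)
The plan is to obtain Corollary~\ref{c:R} as a direct specialization of Theorem~\ref{t:M} to the case $M=\IR$, using the single external input that every metrizable coarse space is $\IR$-normal, as recorded by Protasov in \cite{Prot03}. So the first step is to set $M=\IR$ throughout Theorem~\ref{t:M}. Then $\bar X=h_\IR(X,\E)$ is precisely the Higson compactification, $G=G_{X,\bar X}$ is exactly the permutation group appearing in the corollary, and $\E_G$ is the coarse structure generated by the action of $G$. With this identification the notation of the two statements coincides, so it remains only to match up the hypotheses.

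For statement (1), I would simply invoke Theorem~\ref{t:M}(1): it assumes only that $(X,\E)$ is finitary and asserts $\E\subseteq\E_G$ with no normality requirement whatsoever. Hence statement (1) of the corollary is literally statement (1) of the theorem with $M=\IR$, and nothing further needs to be checked. For statement (2), Theorem~\ref{t:M}(2) delivers the reverse inclusion $\E_G\subseteq\E$, but under the two hypotheses that $(X,\E)$ be metrizable \emph{and} $\IR$-normal. The only gap to close is therefore the $\IR$-normality assumption, and here is where I would cite \cite{Prot03}: since $(X,\E)$ is assumed metrizable, it is automatically $\IR$-normal. Feeding this into Theorem~\ref{t:M}(2) discharges the second hypothesis for free and yields $\E_G\subseteq\E$.

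Statement (3) then follows at once by combining the two inclusions $\E\subseteq\E_G$ and $\E_G\subseteq\E$ obtained above; equivalently, one may apply Theorem~\ref{t:M}(3) with $M=\IR$, its $\IR$-normality hypothesis again being absorbed into metrizability via \cite{Prot03}. I do not anticipate any genuine obstacle in this argument: the entire content is the transcription $M\leadsto\IR$ together with the observation that metrizability upgrades to $\IR$-normality, so the sole conceptual ingredient beyond Theorem~\ref{t:M} itself is the quoted result of \cite{Prot03}.
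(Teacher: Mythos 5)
Your proposal is correct and is exactly the paper's own argument: the corollary is stated immediately after the remark ``Since each metrizable coarse space is $\IR$-normal \cite{Prot03}, Theorem~\ref{t:M} implies,'' i.e., the paper derives it by specializing Theorem~\ref{t:M} to $M=\IR$ and absorbing the $\IR$-normality hypothesis into metrizability via \cite{Prot03}, just as you do.
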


\section{$M$-Soft compactifications}\label{s:soft}

Let $(M,d_M)$ be a proper metric space. 
 In order to answer Problems~\ref{prob0} and \ref{prob1}, we shall introduce the notion of an $M$-soft compactification. First we recall the definition of oscillation. Let $\varphi:X\to M$ be a function defined on a subset $X$ of a topological space $K$. For a point $z\in\bar X\subseteq K$ denote by $\tau_z$ the family of all open neighborhoods of $z$ in $K$. The number
$$\osc_z(\varphi):=\inf_{O_x\in\tau_x}\diam\,\varphi(X\cap O_x)$$
is called {\em the oscillation} of $\varphi$ at the point $z$. By Lemma 4.3.16 in \cite{Eng}, $\varphi$ admits a continuous extension $\bar X\to M$ if and only if $\osc_z(\varphi)=0$ for each $x\in\bar X$.

\begin{definition} A compactifiction $K$ is defined to be {\em $M$-soft} if for any bounded function $\varphi:K'\to M$ with $\{z\in K:\osc_z(\varphi)>0\}\ne\emptyset$, there exist a homeomorphism $g\in \Homeo(K,K')$ and $\e>0$ such that $d_M(\varphi(g(x)),\varphi(x))\ge\e$ for infinitely many points $x\in K'$.
\end{definition}

\begin{lemma}\label{l:soft} The $M$-compactification $h_M(X,\E)$ of any finitary coarse space $(X,\E)$ is $M$-soft.
\end{lemma}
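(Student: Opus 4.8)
The plan is to argue by contraposition. Suppose the $M$-compactification $\bar X:=h_M(X,\E)$ is \emph{not} $M$-soft, witnessed by a bounded function $\varphi\colon X\to M$ (where $X=K'$ is the set of isolated points of $\bar X$) with $\osc_z(\varphi)>0$ for some $z$, yet such that for every homeomorphism $g\in\Homeo(\bar X,X)$ and every $\e>0$ the set $\{x\in X:d_M(\varphi(g(x)),\varphi(x))\ge\e\}$ is finite. Since every isolated point has oscillation $0$, the witnessing point $z$ must lie in the corona $\bar X\setminus X$. My goal is to derive that $\varphi$ is slowly oscillating, hence extends continuously over $\bar X$, contradicting $\osc_z(\varphi)>0$.

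First I would invoke finitariness. By Theorem~\ref{t:Prot}, $\E=\E_H$ for some subgroup $H\subseteq S_X$, so the entourages $E_F:=\{(x,y):y\in\{x\}\cup\{g(x)\}_{g\in F}\}$ with $F\in[X]^{<\w}\cap[H]^{<\w}$ form a base of $\E$. The crucial link to the softness hypothesis is that each $g\in H$ extends to a homeomorphism $\bar g\in\Homeo(\bar X,X)$: this is exactly the content of the argument proving $H\subseteq G_{X,\bar X}$ inside Theorem~\ref{t:M}(1), with invertibility supplied for free by $g^{-1}\in H$ (the continuous extensions of $g$ and $g^{-1}$ compose to continuous extensions of the identity on the dense set $X$, hence to $\mathrm{id}_{\bar X}$). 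Thus the non-softness assumption applies to every $g\in H$ and tells us that $d_M(\varphi(g(x)),\varphi(x))\ge\e$ holds for only finitely many $x$.

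Next I would verify that $\varphi$ is slowly oscillating. Fix $\e>0$ and a basic entourage $E_F$ with $F$ a finite subset of $H$. For each $g\in F$ the set $B_g:=\{x:d_M(\varphi(g(x)),\varphi(x))\ge\tfrac12\e\}$ is finite, so $B:=\bigcup_{g\in F}B_g$ is finite, hence bounded. For $x\notin B$ and any $g,g'\in F\cup\{\mathrm{id}\}$ the triangle inequality gives $d_M(\varphi(g(x)),\varphi(g'(x)))\le d_M(\varphi(g(x)),\varphi(x))+d_M(\varphi(x),\varphi(g'(x)))<\e$, whence $\diam\,\varphi(E_F[x])<\e$. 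As the $E_F$ form a base of $\E$, this shows $\varphi\in\so(X,\E;M)$.

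Finally, any bounded slowly oscillating function extends continuously over $\bar X=\overline{\delta_M(X)}$, since it is (after reindexing) one of the coordinate functions defining the embedding $\delta_M$ and therefore agrees with the restriction to $\bar X$ of the corresponding coordinate projection. By the oscillation criterion (Lemma 4.3.16 in \cite{Eng}) this forces $\osc_z(\varphi)=0$ at every $z\in\bar X$, contradicting the choice of $\varphi$. I expect the only genuinely delicate point to be the assertion that elements of $H$ act as homeomorphisms of $\bar X$ fixing the corona; but this has already been established inside the proof of Theorem~\ref{t:M}(1) and can simply be cited, leaving only the routine triangle-inequality estimate and the defining property of the $M$-compactification.
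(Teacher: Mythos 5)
Your proof is correct, and at its core it is the paper's argument run in the contrapositive direction: the paper argues directly, deducing from $\osc_z(\varphi)>0$ that $\varphi\notin\so(X,\E;M)$, extracting an entourage $E\subseteq E_F$ (with $F=F^{-1}\subseteq G$ finite) and an infinite set $\Omega=\{x\in X:\diam\,\varphi(E_F[x])\ge\e\}$, and then using the Pigeonhole Principle over the finite set $F$ to find a single $g\in F$ with $d_M(\varphi(g(x)),\varphi(x))\ge\tfrac12\e$ for infinitely many $x$; your finite-union-plus-triangle-inequality step is exactly the dual of that pigeonhole step, and both proofs rest on the same two ingredients, namely Theorem~\ref{t:Prot} and the fact that bounded slowly oscillating functions extend continuously over $h_M(X,\E)$ (being restrictions of coordinate projections). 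The one place where you genuinely add something is your explicit verification that each $g\in H$ is the restriction of a homeomorphism in $\Homeo(\bar X,X)$, obtained by citing the argument inside Theorem~\ref{t:M}(1) and composing the continuous extensions of $g$ and $g^{-1}$ to get invertibility. This step is not optional: the definition of $M$-softness quantifies over homeomorphisms in $\Homeo(K,K')$, whereas the paper's proof ends with a bare permutation $g\in G\subseteq S_X$ and never remarks that it extends to such a homeomorphism, so the paper's write-up silently relies on precisely the fact you spelled out; in this respect your version is the more complete one. The only blemishes are cosmetic: the notation $[X]^{<\w}\cap[H]^{<\w}$ should simply be the family of finite subsets of $H$, and the strict inequality $\diam\,\varphi(E_F[x])<\e$ deserves the one-line remark that it holds because $\varphi(E_F[x])$ is a finite set, so the supremum of the pairwise distances is attained.
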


\begin{proof} Let $(X,\E)$ be a finitary coarse space. To show that its $M$-compactification $K:=h_M(X,\E)$ is $M$-soft, fix any bounded function $\varphi:K'\to M$ with $\osc_z(\varphi)>0$ for some $z\in K$. This means that $\varphi$ does not  extend to a continuous function on $K$ and hence $\varphi\notin \so(X,\E;M)$. So, there exists an entourage $E\in\E$ and $\e>0$ such that the set $\Omega:=\{x\in X:\diam\,\varphi(E[x])\ge\e\}$ is unbounded and hence infinite in $X$.

By Theorem~\ref{t:Prot}, $\E=\E_G$ for some subgroup $G\subseteq S_X$. The definition of the coarse structure $\E_G$ yields a finite subset $F=F^{-1}\subseteq G$ such that $E\subseteq E_F:=\{(x,y):y\in\{x\}\cup \{g(x)\}_{g\in F}\}$. Then for any $x\in\Omega$ we have
$$\e\le \diam\, 
\varphi(E_F[x])\le 2\max_{g\in F}d_M(\varphi{\circ}g(x),\varphi(x)).$$
For every $g\in F$ consider the set $\Omega_g:=\{x\in\Omega:d_M(\varphi\circ g(x),\varphi(x))\ge\frac12\e\}$ and observe that $\Omega=\bigcup_{g\in F}\Omega_g$. 
By the Pigeonhole Principle, for some $g\in F$ the set $\Omega_g$ is infinite. So, $d_M(\varphi\circ g(x),\varphi(x))\ge\frac12\e$ for infinitely many points $x$ (in particular, for all points in $\Omega_g$).
\end{proof}

The following theorem answers Problem~\ref{prob1}.

\begin{theorem}\label{t:main2} Let $M$ be a proper metric space, $K$ be a compactification, $X$ be the set of isolated points of $K$ and $(X,\E_{X,K})$ be the permutation coarse space of $K$. The space $K$ is $M$-equivalent to the $M$-compactification $h_M(X,\E)$ of the coarse space $(X,\E_{X,K})$ if and only if $K$ is $M$-soft.
\end{theorem}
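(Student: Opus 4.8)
The plan is to translate both halves of the asserted equivalence into purely function-theoretic statements about three families of bounded $M$-valued functions on $X$: the restrictions $U:=\uc(X,K;M)$ of continuous functions on $K$, the restrictions $U_0:=\uc(X,K_0;M)$ of continuous functions on $K_0:=h_M(X,\E_{X,K})$, and the family $S:=\so(X,\E_{X,K};M)$ of bounded slowly oscillating functions. By definition $K$ is $M$-equivalent to $K_0$ iff $U=U_0$. I will establish three facts: (i) $U\subseteq S$; (ii) $U_0=S$; and (iii) $K$ is $M$-soft iff $S\subseteq U$. Granting these, the theorem follows formally: using (ii), the condition $U=U_0$ reads $U=S$, and since $U\subseteq S$ always holds by (i), this is equivalent to $S\subseteq U$, which by (iii) means precisely that $K$ is $M$-soft.

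For (i) I would argue as in the proof of Theorem~\ref{t:M}(1): given $\varphi=\bar\varphi{\restriction}X$ with $\bar\varphi\in C(K;M)$ and any $g\in G_{X,K}$, lift $g$ to $\tilde g\in\Homeo(K,X)$ and observe that the continuous function $p\mapsto d_M(\bar\varphi(\tilde g(p)),\bar\varphi(p))$ vanishes on the closed set $K\setminus X$; hence for each $\e>0$ the set on which it is $\ge\e$ is compact and contained in the set $X$ of isolated points, so finite. Since $\E_{X,K}$ is generated by the entourages $E_F$ with $F\subseteq G_{X,K}$ finite, and $\diam\,\varphi(E_F[x])\le 2\max_{g\in F}d_M(\varphi(g(x)),\varphi(x))$, this forces $\varphi\in S$. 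The very same inequality is the engine for (iii): it reduces slow oscillation with respect to $\E_{X,K}$ to slow oscillation along single permutations, so that a bounded $\varphi$ fails to lie in $S$ exactly when $d_M(\varphi(g(x)),\varphi(x))\ge\e$ for infinitely many $x$, for some $g\in G_{X,K}$ and $\e>0$ (the Pigeonhole step of Lemma~\ref{l:soft}). Meanwhile, by Lemma~4.3.16 in \cite{Eng} a bounded $\varphi$ lies in $U$ iff $\osc_z(\varphi)=0$ for all $z\in K$, i.e. iff $\{z:\osc_z(\varphi)>0\}=\emptyset$. Reading the definition of $M$-softness through these two reformulations, its defining implication "$\varphi\notin U\Rightarrow\varphi\notin S$ for all bounded $\varphi$" is, by contraposition, exactly $S\subseteq U$, which is (iii).

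The main obstacle is (ii), and specifically the inclusion $U_0\subseteq S$; the reverse inclusion $S\subseteq U_0$ is immediate, since each $f\in S$ is the restriction to $X$ of the coordinate projection $\pi_f{\restriction}K_0$. For $U_0\subseteq S$ I see two routes. The direct, Higson-type route: if a bounded $\varphi=\psi{\restriction}X$ with $\psi\in C(K_0;M)$ were not slowly oscillating, there would be $E\in\E_{X,K}$, $\e>0$ and infinitely many $x$ carrying pairs $y_x,y_x'\in E[x]$ with $d_M(\varphi(y_x),\varphi(y_x'))\ge\e$; passing to subnets with $\delta_M(y_x)\to z$ and $\delta_M(y_x')\to z'$ in the compact space $K_0$, every coordinate $f\in S$ gives $\pi_f(z)=\pi_f(z')$ because $f$ is slowly oscillating and $(y_x,y_x')\in E^{-1}\circ E$ with $y_x$ leaving every finite set, so $z=z'$ in the product, contradicting $d_M(\psi(z),\psi(z'))\ge\e$. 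The shorter route, reusing the machinery already developed, applies (i) to $K_0$ itself, giving $U_0\subseteq\so(X,\E_{X,K_0};M)$; since $\E_{X,K}$ is finitary (Theorem~\ref{t:Prot}, being generated by $G_{X,K}$), applying Theorem~\ref{t:M}(1) to the coarse space $(X,\E_{X,K})$, whose $M$-compactification is $K_0$, yields $\E_{X,K}\subseteq\E_{X,K_0}$, and monotonicity of $\so$ then gives $\so(X,\E_{X,K_0};M)\subseteq\so(X,\E_{X,K};M)=S$. Either way $U_0=S$, and the proof assembles as in the first paragraph; throughout one need only record the elementary bookkeeping that all functions in play are bounded, automatic for $U$ and $U_0$ by compactness of $K$ and $K_0$ and built into the definition of $S$.
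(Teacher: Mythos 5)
Your proposal is correct, and it runs on the same engine as the paper's proof: the pigeonhole reduction of slow oscillation along entourages $E_F$ to displacement along a single permutation, the fact that members of $G_{X,K}$ lift to homeomorphisms of $K$ fixing $K\setminus X$, and the oscillation criterion for continuous extendability. The difference is in the decomposition, and it is a genuine gain in transparency. The paper proves the two implications separately: ``only if'' by citing Lemma~\ref{l:soft}, and ``if'' by verifying $\uc(X,K;M)=\so(X,\E_{X,K};M)$ (a uniform-continuity argument for one inclusion, softness-by-contradiction for the other), while treating the identity $\uc(X,h_M(X,\E_{X,K});M)=\so(X,\E_{X,K};M)$ --- your fact (ii) --- as implicit in the definition of the $M$-compactification. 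You isolate (ii) and actually prove it; your shorter route (apply (i) to $K_0$, then Theorem~\ref{t:M}(1) plus monotonicity of $\so$) is legitimate, with the small caveat that monotonicity of $\so$ under enlarging the coarse structure requires the bornologies to agree, which holds here because both $\E_{X,K}$ and $\E_{X,K_0}$ are finitary, so bounded means finite for both. Your reformulation of $M$-softness as the inclusion $S\subseteq U$ (your fact (iii)) then makes both directions of the theorem purely formal; in particular it absorbs the content of Lemma~\ref{l:soft} into (iii) and sidesteps a slightly delicate point that the paper glosses, namely why $M$-softness of $h_M(X,\E_{X,K})$ transfers to $K$ along a mere $M$-equivalence (the witnessing homeomorphism in the definition of softness lives on a specific compactification, whereas your function-theoretic reformulation, stated in terms of $G_{X,K}$ throughout, never has to move it from one compactification to another). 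Finally, your proof of (i) --- the superlevel sets of the continuous function $p\mapsto d_M(\bar\varphi(\tilde g(p)),\bar\varphi(p))$, which vanishes on $K\setminus X$, are compact subsets of the discrete set $X$, hence finite --- is a sound and arguably cleaner alternative to the paper's argument via uniform continuity over a neighborhood of the diagonal.
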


\begin{proof} The ``only if'' part follows from Lemma~\ref{l:soft}. To prove the ``if'' part, assume that the compactification $K$ is $M$-soft. Let $X$ be the set of isolated points of $K$. Denote by $C(K;M)$ the family of all continuous $M$-valued functions on $K$ and let $\uc(X,K;M):=\{f{\restriction}X:f\in C(K;M)\}$. The $M$-equivalence of the compactifications $K$ and $\bar X:=h_M(X,\E)$ will follow as soon as we check that $\uc(X,K;M)=\so(X,\E_{X,K};M)$.

First we check that each function $\varphi\in\uc(X,K;M)$ belongs $\so(X,\E_{X,K};M)$. Given any $\e>0$ and a finite subset $F\subseteq G_{X,K}$, we will find a finite set $B\subseteq X$ such that
 $\diam\{\varphi(g(x)):g\in F\}<\e$ for all $x\in X\setminus B$. 
 
Let $\bar\varphi:K\to M$ be a unique continuous function such that $\varphi=\bar\varphi{\restriction}X$. By the uniform continuity of $\bar\varphi$, there exists an open neighborhood $U\subseteq K\times K$ of the diagonal $\Delta_K$ such that $\diam\, \bar\varphi(U[z])<\frac12\e$ for any $z\in K$. By the definition of the group $G_{X,K}$, every permutation $g\in F\subseteq G_{X,K}$ coincides with the restriction $\bar g{\restriction}X$ of some homeomorphism $\bar g$ of $K$ such that $\bar g(z)=z$ for all $z\in K\setminus X$. Then any $z\in K\setminus X$ has an open neighborhood $V_z\subseteq K$ such that $V_z\times \bar g(V_z)\subseteq U$ for all $g\in F$. It follows that $V:=\bigcup_{z\in K\setminus X} V_z$ is an open neighborhood of $K\setminus X$ in $K$. Since the subspace $X$ is discrete, the compact set $B:=K\setminus V\subseteq X$ is finite. 

For any point $x\in X\setminus B=V$ there exits a point $z\in K\setminus X$ such $x\in V_z$. Then for any $g\in F$ we have $(x,\bar g(x))\in V_z\times \bar g(V_z)\subseteq U$ and hence $d_M(\varphi\circ \bar g(x),\varphi(x))<\frac12\e$. Finally, $\diam \{\varphi\circ g(x):g\in F\}<\frac12\e+\frac12\e=\e$ for any $x\in X\setminus B$. 
 \smallskip
 
Next, we show that each function $\varphi\in\so(X,\E_{X,K};M)$ belongs to $\uc(X,K;M)$. To derive a contradiction, assume that $\varphi$ does not belong to $\uc(X,K;M)$ and hence admits no continuous extensions to $K$.
This means that $\osc_z(\varphi)>0$ for some $z\in K\setminus X$.
Since $K$ is $M$-soft, there exist a permutation $g\in \Homeo(K,X)$ and $\e>0$ such that $d_M(\varphi(g(x)),\varphi(x))\ge\e$ for infinitely many points $x\in X$. This means that the function $\varphi$ is not slowly oscillating (with respect to the coarse structure $\E_{X,K}=\E_{G_{X,K}}$, which contradicts the choice of $\varphi$.
\end{proof}

\begin{corollary}\label{c:main}  A compactification $K$ is homeomorphic to the Higson compactification of some finitary coarse space if and only if $K$ is $\IR$-soft.
\end{corollary}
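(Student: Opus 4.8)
The plan is to deduce this corollary from Theorem~\ref{t:main2} specialized to $M=\IR$, bridging the gap between ``$\IR$-equivalent'' and ``homeomorphic'' via the Proposition comparing equivalence and $\IR$-equivalence, and invoking Theorem~\ref{t:Prot} to see that permutation coarse spaces are finitary. Recall that the Higson compactification is exactly the $\IR$-compactification $h_\IR(X,\E)$, so the whole statement is just the $M=\IR$ instance of the machinery already developed.

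For the ``only if'' direction, suppose $K$ is homeomorphic to the Higson compactification $h_\IR(X,\E)$ of some finitary coarse space $(X,\E)$. By Lemma~\ref{l:soft} the space $h_\IR(X,\E)$ is $\IR$-soft, so it remains to observe that $\IR$-softness is a topological invariant of compactifications. Concretely, if $h:K\to h_\IR(X,\E)$ is a homeomorphism, then $h$ carries the set $K'$ of isolated points of $K$ onto the set $X$ of isolated points of $h_\IR(X,\E)$. Given a bounded $\varphi:K'\to\IR$ with $\osc_z(\varphi)>0$ for some $z\in K$, I would transport it to $\psi:=\varphi\circ(h^{-1}{\restriction}X):X\to\IR$; since oscillation is preserved by homeomorphisms, $\osc_{h(z)}(\psi)>0$, and the $\IR$-softness of $h_\IR(X,\E)$ yields $\tilde g\in\Homeo(h_\IR(X,\E),X)$ and $\e>0$ with $d_\IR(\psi(\tilde g(y)),\psi(y))\ge\e$ for infinitely many $y\in X$. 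Conjugating, the map $g:=h^{-1}\circ\tilde g\circ h\in\Homeo(K,K')$ then witnesses the $\IR$-softness of $K$.

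For the ``if'' direction, assume $K$ is $\IR$-soft and let $(X,\E_{X,K})$ be its permutation coarse space. Since $\E_{X,K}=\E_{G_{X,K}}$ is generated by the action of the subgroup $G_{X,K}\subseteq S_X$, Theorem~\ref{t:Prot} guarantees that $(X,\E_{X,K})$ is finitary. By Theorem~\ref{t:main2} with $M=\IR$, the $\IR$-softness of $K$ implies that $K$ is $\IR$-equivalent to the Higson compactification $h_\IR(X,\E_{X,K})$. Finally, by part~(3) of the Proposition comparing equivalence and $\IR$-equivalence, $\IR$-equivalence coincides with equivalence, so there is a homeomorphism $K\to h_\IR(X,\E_{X,K})$ fixing $X$ pointwise; in particular $K$ is homeomorphic to the Higson compactification of the finitary coarse space $(X,\E_{X,K})$.

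The substantive content is already packaged in Theorem~\ref{t:main2}, so the only genuine points to check are the two ``transfer'' steps: that $\IR$-softness passes along homeomorphisms (needed because the hypothesis supplies only ``homeomorphic'', not ``equivalent'') and that the permutation coarse space is finitary. I expect the first of these to be the main obstacle, modest as it is, since the definition of $\IR$-softness refers simultaneously to the group $\Homeo(K,K')$ and to oscillations at the non-isolated points of $K$, and one must verify that an arbitrary homeomorphism of compactifications respects all of this structure and not merely the underlying topology.
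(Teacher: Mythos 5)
Your proposal is correct and follows exactly the route the paper intends: Corollary~\ref{c:main} is stated without proof as an immediate consequence of Theorem~\ref{t:main2} (with $M=\IR$), Lemma~\ref{l:soft}, Theorem~\ref{t:Prot} (finitariness of the permutation coarse space), and part~(3) of the Proposition identifying $\IR$-equivalence with equivalence. The two ``transfer'' details you supply --- that $\IR$-softness is invariant under homeomorphisms via conjugation of $\Homeo(K,K')$, and that equivalence upgrades $\IR$-equivalence to a homeomorphism --- are precisely the steps the paper leaves implicit, and both are verified correctly.
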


Under some additional set-theoretic assumptions, we can prove more. Namely, that every separable $\IR$-soft compactification is homeomorphic to the Higson compactification of some {\em cellular} finitary coarse space. This holds under the equality $\as=\mathfrak c$, where $\as$ is the smallest cardinality of a base of a cellular finitary coarse structure $\E$ on $\w$ containing no infinite $\E$-discrete subsets.  A subset $D$ of a coarse space $(X,\E)$ is called {\em $\E$-discrete} if for any entourage $E\in\E$ there exists a bounded set $B\subset X$ such that $D\cap E[x]=\{x\}$ for any $x\in D\setminus B$. In \cite{LP} $\E$-discrete sets are called thin.

The cardinal $\as$ has been introduced in \cite{j}, where it is shown that $\max\{\mathfrak b,\mathfrak s,\mathrm{cov}(\mathcal N)\}\le\as\le\mathfrak c$ and hence $\as=\mathfrak c$ under Martin's Axiom. On the other hand, we do not know whether $\as=\mathfrak c$ in ZFC. The equality $\as=\mathfrak c$ allows us to prove the following consistent improvement of the ``if'' part in Corollary~\ref{c:main}.

\begin{theorem}\label{t:mainCH} Let $M$ be a proper metric space. Under $\as=\mathfrak c$, each separable $M$-soft compactification $K$ is $M$-equivalent to the $M$-Higson compactification $h_M(K',\E)$ of the set $K'$ of isolated points of $K$ endowed with a suitable cellular finitary coarse structure $\E$.
\end{theorem}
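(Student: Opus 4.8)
The plan is to reduce the statement to a purely coarse-geometric construction and then carry out a transfinite recursion of length $\as=\mathfrak c$. First I would use separability: since the isolated points of $K$ belong to every dense subset, separability of $K$ forces $X:=K'$ to be countable, so we may identify $X$ with $\w$. As $K$ is $M$-soft, Theorem~\ref{t:main2} gives that $K$ is $M$-equivalent to $h_M(X,\E_{X,K})$, where $\E_{X,K}=\E_{G_{X,K}}$ is finitary; in particular $\uc(X,K;M)=\so(X,\E_{X,K};M)=:\F$. Because $M$ is proper (hence separable) and $X$ is countable, there are at most $\mathfrak c$ bounded functions $X\to M$; thus $|\F|\le\mathfrak c$ and the set $\mathcal B$ of bounded functions $X\to M$ that are \emph{not} slowly oscillating for $\E_{X,K}$ also has size $\le\mathfrak c$. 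Recalling that $\uc(X,h_M(X,\E);M)=\so(X,\E;M)$ for every coarse structure $\E$, it suffices to construct a cellular finitary coarse structure $\E$ on $X$ with $\so(X,\E;M)=\F$; then $h_M(X,\E)$ is $M$-equivalent to $h_M(X,\E_{X,K})$ and hence to $K$.

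Next I would treat the two required inclusions separately. Let $\E^{*}$ be the family of all finitary entourages $E$ on $X$ with $\diam\,\varphi(E[x])\to 0$ (as $x$ leaves finite sets) for every $\varphi\in\F$; a short check shows $\E^{*}$ is the largest finitary coarse structure making all members of $\F$ slowly oscillating, that $\E_{X,K}\subseteq\E^{*}$, and that $\so(X,\E^{*};M)=\F$. Consequently any finitary $\E\subseteq\E^{*}$ automatically satisfies $\F\subseteq\so(X,\E;M)$. To obtain the reverse inclusion $\so(X,\E;M)\subseteq\F$, the structure $\E$ must, for every $\psi\in\mathcal B$, contain a cellular entourage $R$ witnessing that $\psi$ oscillates, i.e. with $\diam\,\psi(R[x])\ge\e$ for some $\e>0$ and infinitely many $x$. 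For a single $\psi$ this is easy: choosing for infinitely many ``bad'' points $x$ a partner $y_x$ inside a fixed entourage of $\E^{*}$ with $d_M(\psi(x),\psi(y_x))$ bounded away from $0$, and thinning to a matching, yields a cellular entourage $R\subseteq\E^{*}$ with cells of size $\le 2$ that kills $\psi$.

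The substance is to do this for all $\psi\in\mathcal B$ \emph{simultaneously} while staying cellular and finitary. Here I would realize $\E$ by a directed family of finite-celled equivalence relations (which is exactly what a cellular finitary coarse structure is, upon unpacking the definitions of Section~\ref{s:def}), built by recursion on $\alpha<\mathfrak c=\as$ over an enumeration $\mathcal B=\{\psi_\alpha:\alpha<\mathfrak c\}$, at stage $\alpha$ adding a matching that kills $\psi_\alpha$. The hypothesis $\as=\mathfrak c$ enters through the cellular finitary coarse structure $\mathcal C$ on $\w$ of weight $\mathfrak c$ with no infinite $\mathcal C$-discrete subset that it provides: I would use $\mathcal C$ as a scaffold and force every matching to run along cells of $\mathcal C$, so that the finite common coarsenings of the chosen equivalence relations remain inside the finitary levels of $\mathcal C$. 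The absence of an infinite $\mathcal C$-discrete subset is precisely what guarantees that, for each $\psi_\alpha$, its infinitely many bad pairs can be absorbed into a single finite-celled equivalence relation, while the weight $\mathfrak c$ of $\mathcal C$ matches the $\mathfrak c$ functions that must be handled.

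The main obstacle is exactly this coherence. Finite joins of finitary equivalence relations need not be finitary (two matchings can chain up into an infinite cell), so the $\mathfrak c$ matchings cannot be chosen independently, and the family cannot be taken to be a single increasing chain, whose union would collapse the weight below $\mathfrak c$. Reconciling the competing demands --- cells fine enough to keep all $\mathfrak c$ functions of $\F$ slowly oscillating (so that $\E\subseteq\E^{*}$), yet coarse enough to detect the oscillation of every one of the $\mathfrak c$ bad functions, with all finite coarsenings remaining finitary --- is the combinatorial heart of the argument, and it is here that the scaffold furnished by $\as=\mathfrak c$, together with its lack of infinite discrete subsets, is indispensable.
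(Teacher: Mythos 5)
Your reduction is sound and essentially the one the paper uses: separability makes $X=K'$ countable, Theorem~\ref{t:main2} identifies $\uc(X,K;M)$ with $\so(X,\E_{X,K};M)$, the bad functions number at most $\mathfrak c$, and one runs a length-$\mathfrak c$ recursion that kills each bad function $\psi_\alpha$ by an involution built from the pairs $(x,h_\alpha(x))$ supplied by $M$-softness, while keeping all of $\uc(X,K;M)$ slowly oscillating (your $\E^*$ plays the role that the subgroup $G_{X,K}$ plays in the paper). You also correctly isolate the real obstruction: matchings can chain up, so cellularity, not finitarity, is what can fail. But the mechanism you propose to overcome this is where the proof breaks, because you have the role of $\as=\mathfrak c$ exactly backwards. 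The hypothesis is not used to supply a coarse structure $\mathcal C$ of weight $\mathfrak c$ with no infinite discrete subsets to serve as a scaffold (such a structure of weight $\as\le\mathfrak c$ exists in ZFC anyway); it is used to guarantee that every cellular finitary coarse structure of weight $<\mathfrak c$ has infinite discrete subsets inside \emph{every} infinite set. In the paper, at stage $\alpha$ the structure $\E_{H_{<\alpha}}$ built so far has a base of size $\le|\alpha+\w|<\mathfrak c=\as$, so the bad set $\Omega_\alpha$ contains an infinite $\E_{H_{<\alpha}}$-discrete subset $T_\alpha$, and the involution swapping $x_n\leftrightarrow h_\alpha(x_n)$ along a suitable sequence in $T_\alpha$ can be added \emph{without destroying cellularity} (Lemma~5.3 of \cite{j}). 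Smallness of the partial structure, not largeness of a fixed ambient one, is what keeps the recursion alive.

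Your scaffold provably cannot do this job, because the bad pairs come from homeomorphisms $h_\alpha\in G_{X,K}$ and bear no relation to a structure $\mathcal C$ fixed in advance. Concretely, let $K=\w+1$ be the one-point compactification of $\w$ (separable, first-countable, hence $M$-soft), let $E$ (evens) and $O$ (odds) partition $\w$, and let $\mathcal C=\mathcal C_E\oplus\mathcal C_O$ be the direct sum of cellular finitary coarse structures without infinite discrete subsets on $E$ and on $O$ (cells of each entourage lie entirely in $E$ or entirely in $O$). Then $\mathcal C$ is cellular, finitary, of weight $\mathfrak c$, and has no infinite $\mathcal C$-discrete subsets --- it meets every requirement you place on the scaffold --- yet no entourage of $\mathcal C$ contains even one off-diagonal pair joining $E$ to $O$. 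Hence no matching ``running along cells of $\mathcal C$'' can witness the oscillation of the bad function $\chi_E$, and the recursion dies at the very first stage. Relatedly, the absence of infinite $\mathcal C$-discrete sets only yields, for an infinite set $D$, some $C\in\mathcal C$ relating infinitely many points of $D$ to \emph{other} points of $D$; these $C$-pairs need not be bad pairs for $\psi_\alpha$ and need not lie in $\E^*$, so they neither kill $\psi_\alpha$ nor are legal to add. Two smaller omissions you would also have to repair: the degenerate case $\uc(X,K;M)=\mathsf b(X;M)$ (i.e.\ $K=\beta X$), handled by the group of finitely supported permutations; and the case where $h_\alpha$ agrees on infinitely many bad points with an element of a finitely generated subgroup of the current group --- there no new generator can be added along a discrete set, and none is needed, since a pigeonhole argument shows the current group already kills $\psi_\alpha$. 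Finally, your remark that the family ``cannot be taken to be a single increasing chain'' is off: the paper's construction is precisely an increasing chain of cellular subgroups $H_\alpha\subseteq G_{X,K}$, hence of cellular coarse structures $\E_{H_\alpha}$, whose union works.
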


\begin{proof} By the separability of $K$, the set $X$ of isolated point of $K$ is countable. Being proper, the metric space $M$ has cardinality $|M|\le\mathfrak c$. Let $C(K;M)$ be the family of continuous $M$-valued maps on $K$ and $\uc(X,K;M):=\{f{\restriction}X:f\in C(K;M)\}$. Let $\mathsf b(X;M)$ be the set of bounded $M$-valued functions on $X$. If $\uc(X,K;M)=\mathsf b(X;M)$, then each bounded function $\varphi:X\to M$ has a continuous extension $\bar\varphi:K\to M$, which implies that $K$ is a Stone-\v Cech compactification of $X$ and $K$ is $M$-equivalent to $h_M(X,\E_H)$ for the cellular finitary coarse structure $\E_H$,  induced by the group $H=FS_X$ of finitely supported permutations of $X$.

So, we assume that $\uc(X,K;M)\ne\mathsf b(X,K;M)$.  Since $|M^X|\le|\mathfrak c^\w|=\mathfrak c$, the non-empty set $\mathsf b(X;M)\setminus\uc(X,K;M)$ can be written as $\{\varphi_\alpha\}_{\alpha<\mathfrak c}$.

Let $G_{X,K}$ be the permutation group of the compactification $K$. A subgroup $H\subset G_{X,K}$ is defined to be {\em cellular} if the coarse structure $\E_H$ on $X$ is cellular. This happens if and only if for any finitely-generated subgroup $F\subseteq H$ the cardinal $\sup_{x\in X}|F(x)|$ is finite. Here $F(x):=\{f(x):f\in F\}$.

Assume that $\as=\mathfrak c$. We shall inductively construct an increasing transfinite sequence of cellular subgroups $(H_\alpha)_{\alpha\in\mathfrak c}$ of $G_{X,K}$ such that for any $\alpha<\mathfrak c$ the group $H_\alpha$ has cardinality $|H_\alpha|\le |\alpha+\w|$ and there are $\e_\alpha>0$ and $g_\alpha\in H_\alpha$ such that the set $\{x\in X:d_M(\varphi\circ g_\alpha(x),\varphi(x))>\e_\alpha\}$ is infinite.

To start the inductive construction, consider the function $\varphi_0$. By the $M$-softness of $K$, for the function $\varphi_0\notin \uc(X,K;M)$ there exist $\e_0>0$ and a permutation $h_0\in G_{X,K}$ such that the set $\Omega_0=\{x\in K':d_M(\varphi_0(h_0(x)),\varphi_0(x))>\e_0\}$ is infinite. Choose a sequence of points $x_n\in\Omega$ such that $x_{n+1}\notin \{x_k\}_{k\le n}\cup \{h_0(x_k)\}_{k\le n}\cup\{h_0^{-1}(x_k)\}_{k\le n}$ for all $n\in\w$. Consider the permutation $g_0:X\to X$, defined by
$$
g_0(x)=
\begin{cases}h_0(x_n)&\mbox{if $x=x_n$ for some $n\in\w$};\\
x_n&\mbox{if $x=h_0(x_n)$ for some $n\in\w$};\\
x&\mbox{otherwise};
\end{cases}
$$and observe that $g_0$ belongs to the group $G_{X,K}$. Let $H_0$ be the cyclic subgroup of $G_{X,K}$ generated by $g_0$. Observe that $|H_0|=2$ and hence the subgroup $H_0\subset S_X$ is cellular.

Now assume that for some nonzero ordinal $\alpha<\mathfrak c$ we have constructed an increasing transfinite sequence $(H_\beta)_{\beta<\alpha}$ of cellular subgroups of $S_{X,K}$ such that $|H_\beta|\le|\beta+\w|$ for all $\beta<\alpha$. Then the union $H_{<\alpha}=\bigcup_{\beta<\alpha}H_\beta$ is a cellular subgroup of $S_X$ with $|H_{<\alpha}|\le\sum_{\beta<\alpha}|H_\beta|\le |\alpha|\cdot |\alpha+\w|=|\alpha+\w|$ and $\E_{H_{<\alpha}}$ is a cellular finitary coarse structure on $\w$ having a base of cardinality $|[H_{<\alpha}]^{<\w}|\le|\alpha+\w|<\mathfrak c=\as$. Consider the function $\varphi_\alpha\in \mathsf b(X;M)\setminus \uc(X,K;M)$. By the $M$-softness of $K$, there exist a permutation $h_\alpha\in G_{X,K}$ and $\e_\alpha>0$ such that the set $\Omega_\alpha=\{x\in X:d_M(\varphi_\alpha\circ h_\alpha(x),\varphi_\alpha(x))>\e_\alpha\}$ is infinite.

Two cases are possible.

1. For some finitely-generated subgroup $F\subseteq H_{<\alpha}$, the set $\{x\in\Omega_\alpha:h_\alpha(x)\in F(x)\}$ is infinite.
 Let $S=S^{-1}$ be a finite set of generators of the group $F$. Let $e$ be the unit of the group $F$, $S^0=\{e\}$ and $S^{k+1}=S^k\circ S=\{f\circ g:f\in S^k,\;g\in S\}$ for $k\in\w$. Since the subgroup $F\subseteq H_{<\alpha}\subset S_X$ is cellular, the cardinal $m=\sup_{x\in X}|F(x)|$ is finite. We claim that for some $k<m$ the set  $\{x\in \Omega_\alpha:S^k(x)=S^{k+1}(x)\}$ is infinite. Assuming the converse, we conclude that the set $\Omega_\alpha'=\bigcup_{k< m}\{x\in \Omega_\alpha:S^k(x)=S^{k+1}(x)\}$ is finite and then for any $x\in\Omega_\alpha\setminus\Omega_\alpha'$ the sequence $(S^k(x))_{k=0}^m$ is strictly increasing which is not possible as $\bigcup_{k\in\w}S^k(x)=F(x)$ has cardinality $\le m$. This contradiction shows that for some $k\in\w$ the set $$\widetilde\Omega_\alpha:=\{x\in\Omega_\alpha:S^k(x)=S^{k+1}(x)\}=\{x\in\Omega_\alpha:S^k(x)=F(x)\}$$ is infinite. Then for every $x\in \widetilde\Omega_{\alpha}$ we have $h_\alpha(x)\in F(x)=S^k(x)$. By the Pigeonhole Principle,  for some permutation $s_\alpha\in S^k$, the set $\Lambda_\alpha=\{x\in \widetilde \Omega_{\alpha}:h_\alpha(x)=s_\alpha(x)\}$ is infinite. 
 
Put $H_\alpha:=H_{<\alpha}$ and $g_\alpha:=s_\alpha$, and observe that the subgroup $H_\alpha$ of $S_X$ is cellular and the set
$\{x\in X:d_M(\varphi_\alpha\circ g_\alpha(x),\varphi(x))>\e_\alpha\}$ contains the set $\Lambda_\alpha$ and hence is infinite.
\smallskip

2. For every finitely generated subgroup $F\subseteq H_{<\alpha}$ the set $\{x\in\Omega_\alpha:h_\alpha(x)\in F(x)\}$ is finite. Since the cellular finitary coarse structure $\E_{H_{<\alpha}}$ on $X$ has base of cardinality $<\mathfrak c=\as$, the definition of the cardinal $\as$ ensures that the infinite set $\Omega_\alpha$ contains an infinite $\E_{H_{<\alpha}}$-discrete subset $T_\alpha\subseteq\Omega_\alpha$. Find a sequence $\{x_n\}_{n\in\w}$ of pairwise distinct points of $T_\alpha$ such that the sets $\{x_n\}_{n\in\w}$ and $\{h_\alpha(x_n)\}_{n\in\w}$ are disjoint.

 Consider the permutation $g_\alpha:X\to X$, defined by
$$
g_\alpha(x)=
\begin{cases}h_\alpha(x_n)&\mbox{if $x=x_n$ for some $n\in\w$};\\
x_n&\mbox{if $x=h_\alpha(x_n)$ for some $n\in\w$};\\
x&\mbox{otherwise;}
\end{cases}
$$and observe that $g_\alpha$ belongs to the group $G_{X,K}$. Let $H_\alpha$ be the subgroup of $G_{X,K}$ generated by the set $\{g_\alpha\}\cup H_{<\alpha}$. Using Lemma~5.3 from \cite{j}, one can show that the subgroup $H_\alpha$ of $S_X$ is cellular and 
the set $\{x\in X:d_M(\varphi_\alpha\circ g_\alpha(x)),\varphi_\alpha(x))>\e_\alpha\}\supseteq\{x_n\}_{n\in\w}$ is infinite. This completes the inductive step.

After completing the inductive construction, consider the cellular subgroup $H=\bigcup_{\alpha<\mathfrak c}H_\alpha$ of $G_{X,K}$.
The property of the transfinite sequence $(H_\alpha)_{\alpha<\mathfrak c}$ ensures that any function $\varphi\in \mathbf b(X;M)\setminus \uc(X,K;M)$ is not slowly oscillating with respect to the coarse structure $\E_H$. Consequently, $\uc(K,h_M(X,\E_H);M)=\so(X,\E_H;M)\subseteq\uc(X,K;M)$. On the other hand,  Theorem~\ref{t:main2} ensures that
$\uc(X,K;M)=\so(X,\E_{G_{X,K}};M)\subseteq \so(X,\E_H;M)$, where the latter inclusion folows from $H\subseteq G_{X,K}$. Therefore, $\uc(X,K;M)=\uc(X,h_M(X,\E_H);M)$ and the compactifications $K$ and $h_M(X,\E_H)$ are $M$-equivalent. 
\end{proof}

Applying Theorem~\ref{t:mainCH} to the proper metric spaces $\IR$ and $\mathsf 2$, we obtain the following corollaries.

\begin{corollary}\label{c:CHR} Under $\as=\mathfrak c$, each separable $\IR$-soft compactification $K$ is homeomorphic to the Higson compactification of some cellular finitary coarse space.\end{corollary}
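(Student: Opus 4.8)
The plan is to obtain Corollary~\ref{c:CHR} by specializing Theorem~\ref{t:mainCH} to the proper metric space $M=\IR$ and then upgrading the resulting $\IR$-equivalence to a genuine homeomorphism. First I would observe that $\IR$ is indeed a proper metric space, since $|\IR|\ge 2$ and every closed bounded subset of $\IR$ is compact, so Theorem~\ref{t:mainCH} applies with $M=\IR$. As $K$ is separable and $\IR$-soft by hypothesis, and the standing assumption $\as=\mathfrak c$ is in force, the theorem produces a cellular finitary coarse structure $\E$ on the set $X:=K'$ of isolated points of $K$ such that $K$ is $\IR$-equivalent to the $\IR$-compactification $h_\IR(X,\E)$.

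Next I would recall from Section~\ref{s:H} that the $\IR$-compactification $h_\IR(X,\E)$ is, by definition, the Higson compactification of the coarse space $(X,\E)$, and that $(X,\E)$ is a cellular finitary coarse space. Thus at this point $K$ is known to be $\IR$-equivalent to the Higson compactification of a cellular finitary coarse space, and it remains only to convert $\IR$-equivalence into homeomorphism.

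The key step is to invoke statement (3) of the Proposition on equivalence of compactifications (the one in Section~\ref{s:P} preceding Problem~\ref{prob1}), which asserts that two compactifications of the same set $X=K_1'=K_2'$ are equivalent if and only if they are $\IR$-equivalent. Applying this with $K_1=K$ and $K_2=h_\IR(X,\E)$, the $\IR$-equivalence established above promotes to equivalence, i.e.\ there is a homeomorphism $h\colon K\to h_\IR(X,\E)$ fixing each point of $X$. In particular, $K$ is homeomorphic to the Higson compactification of the cellular finitary coarse space $(X,\E)$, which is exactly the assertion of the corollary.

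Since this is essentially a direct specialization, I do not expect any genuine obstacle. The only substantive ingredient beyond Theorem~\ref{t:mainCH} is the passage from $\IR$-equivalence to honest homeomorphism, and that is precisely what the cited Proposition supplies; its proof in turn rests on the fact that any compactification embeds into a power $\IR^\kappa$, so that the family $\uc(X,K;\IR)$ already determines the topology of $K$ up to equivalence.
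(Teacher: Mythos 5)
Your proof is correct and takes essentially the same approach as the paper: the paper obtains this corollary precisely by applying Theorem~\ref{t:mainCH} with $M=\IR$ and (implicitly) upgrading the resulting $\IR$-equivalence to a homeomorphism via statement (3) of the Proposition in Section~\ref{s:P}, which is exactly the route you spelled out.
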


\begin{corollary}\label{c:CH0} Under $\as=\mathfrak c$, each zero-dimensional separable $\mathsf 2$-soft compactification $K$ is homeomorphic to the binary compactification of some cellular finitary coarse space.
\end{corollary}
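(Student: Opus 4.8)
The plan is to deduce this corollary directly from Theorem~\ref{t:mainCH}, specialized to the proper metric space $M=\mathsf 2=\{0,1\}$, and then to upgrade the resulting $\mathsf 2$-equivalence to a genuine homeomorphism using the zero-dimensionality hypothesis. First I would apply Theorem~\ref{t:mainCH} with $M=\mathsf 2$: since $K$ is a separable $\mathsf 2$-soft compactification and we assume $\as=\mathfrak c$, the theorem produces a cellular finitary coarse structure $\E$ on the set $K'$ of isolated points of $K$ such that $K$ is $\mathsf 2$-equivalent to the $\mathsf 2$-compactification $h_{\mathsf 2}(K',\E)$, which by definition is the binary compactification of the cellular finitary coarse space $(K',\E)$. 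At this point the substantive work is finished, and what remains is purely formal.

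Next I would observe that both compactifications being compared are zero-dimensional. The space $K$ is zero-dimensional by the hypothesis of the corollary, and the binary compactification $h_{\mathsf 2}(K',\E)$ is zero-dimensional by the remark in Section~\ref{s:H}, since it embeds into a power of the discrete doubleton $\mathsf 2$. Having secured the zero-dimensionality of both $K$ and $h_{\mathsf 2}(K',\E)$, I would invoke item (4) of the Proposition in Section~\ref{s:P}, which asserts that two zero-dimensional compactifications of the same underlying set of isolated points are equivalent if and only if they are $\mathsf 2$-equivalent.

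Combining these, the $\mathsf 2$-equivalence of $K$ and $h_{\mathsf 2}(K',\E)$ yields their equivalence, i.e.\ there is a homeomorphism $K\to h_{\mathsf 2}(K',\E)$ restricting to the identity on $K'$; in particular $K$ is homeomorphic to the binary compactification of the cellular finitary coarse space $(K',\E)$, which is exactly the assertion of the corollary. Since essentially all the real content—namely the transfinite construction of the cellular subgroup $H\subseteq G_{X,K}$ handling the pigeonhole-versus-$\E$-discreteness dichotomy—is already encapsulated in Theorem~\ref{t:mainCH}, there is no genuine obstacle to overcome here; the only point demanding care is that the passage from $\mathsf 2$-equivalence to homeomorphism is legitimate, and this hinges entirely on verifying zero-dimensionality of both spaces so that item (4) of the Proposition applies.
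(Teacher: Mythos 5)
Your proof is correct and follows essentially the same route as the paper: the corollary is stated there as an immediate application of Theorem~\ref{t:mainCH} with $M=\mathsf 2$, with the upgrade from $\mathsf 2$-equivalence to an actual homeomorphism left implicit. Your explicit use of the zero-dimensionality of both $K$ and $h_{\mathsf 2}(K',\E)$ together with item (4) of the Proposition in Section~\ref{s:P} is exactly the intended (and needed) justification for that final step.
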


\section{Soft compactifications}\label{s:12soft}

The results of the preceding section motivate a deeper study of $M$-soft compactifications. For this purpose, we introduce a stronger version of the $M$-softness.

\begin{definition} A compactification $K$  is defined to be  {\em soft} if for any disjoint sets $A,B\subseteq K'$ with $\bar A\cap\bar B\ne\emptyset$ there exists a homeomorpism $h\in\Homeo(K,K')$ such that the set $\{x\in A:h(x)\in B\}$ is infinite.
\end{definition}

\begin{proposition}\label{p:sss} Let $M$ be a proper metric space and $K$ be a compactification.
\begin{enumerate}
\item If $K$ is soft, then $K$ is $\IR$-soft.
\item If $K$ is $\IR$-soft, then $K$ is $M$-soft.
\item If $K$ is $M$-soft, then $K$ is $\mathsf 2$-soft.
\end{enumerate}
\end{proposition}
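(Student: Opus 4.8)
The plan is to handle all three implications with one device: the \emph{cluster set} of a bounded function $\varphi\colon K'\to M$ at a point $z\in K$, namely $C_z(\varphi):=\bigcap_{O\in\tau_z}\overline{\varphi(K'\cap O)}$, where $\tau_z$ is the directed family of open neighborhoods of $z$. First I would record its basic properties: since $\varphi$ is bounded and $M$ is proper, $\overline{\varphi(K')}$ is compact, so the sets $\overline{\varphi(K'\cap O)}$ form a decreasing family of nonempty compacta, whence $C_z(\varphi)$ is a nonempty compactum with $\diam C_z(\varphi)=\osc_z(\varphi)$. Thus $\osc_z(\varphi)>0$ yields two cluster values $p\ne q$ at distance $c:=d_M(p,q)=\osc_z(\varphi)>0$; note also that then $z\notin K'$, since isolated points have zero oscillation. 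Splitting by balls of radius $c/3$ around $p$ and $q$ produces disjoint sets $A,B\subseteq K'$ whose $\varphi$-images are separated by a gap and which both accumulate at $z$ (so $z\in\overline A\cap\overline B$, and $A,B$ are infinite). This single construction feeds all three parts.

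For (1), given an $\IR$-valued $\varphi$ with positive oscillation, I would apply softness to the sets $A,B$ above to obtain $h\in\Homeo(K,K')$ moving infinitely many points of $A$ into $B$; since $\varphi$ on $A$ and on $B$ is separated by a gap of size $c/3$ (labeling $p<q$), this directly gives $|\varphi(h(x))-\varphi(x)|\ge c/3$ infinitely often, which is $\IR$-softness.

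For (2), given an $M$-valued $\varphi$ with positive oscillation, I would pass to the bounded real function $\psi:=d_M(\varphi(\cdot),p)$ for a cluster value $p$. The cluster-set analysis shows $\osc_z(\psi)>0$ (values near $0$ and near $c$ both occur arbitrarily close to $z$), so $\IR$-softness supplies $g\in\Homeo(K,K')$ and $\e>0$ with $|\psi(g(x))-\psi(x)|\ge\e$ infinitely often; the inequality $|\psi(g(x))-\psi(x)|\le d_M(\varphi(g(x)),\varphi(x))$ then delivers $M$-softness. For (3), given a $\{0,1\}$-valued $\varphi$ with positive oscillation, I would fix $a\ne b$ in $M$ (available since $|M|\ge2$) and push $\varphi$ forward to $\tilde\varphi\colon K'\to M$ taking only the values $a,b$; positive oscillation of the two-valued $\varphi$ forces both values to occur near $z$, so $\osc_z(\tilde\varphi)\ge d_M(a,b)>0$, and $M$-softness applied to $\tilde\varphi$ gives $g\in\Homeo(K,K')$ and $\e>0$ with $d_M(\tilde\varphi(g(x)),\tilde\varphi(x))\ge\e$ infinitely often. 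As $\tilde\varphi$ is two-valued this forces $\varphi(g(x))\ne\varphi(x)$, i.e. the $\mathsf 2$-distance equals $1$ for those $x$, which is $\mathsf 2$-softness.

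The only genuinely delicate step is the cluster-set bookkeeping: because $K$ is not assumed first-countable, I cannot extract a sequence converging to $z$ and must argue through the directed net of neighborhoods, leaning on the compactness of $\overline{\varphi(K')}$ (which is precisely where properness of $M$ enters) to guarantee $C_z(\varphi)\ne\emptyset$ and $\diam C_z(\varphi)=\osc_z(\varphi)$. Once that is in place, each implication is a short transfer, and the chain soft $\Ra$ $\IR$-soft $\Ra$ $M$-soft $\Ra$ $\mathsf 2$-soft reflects exactly how much room the target metric space affords: $\IR$ is rich enough to linearly separate the two cluster values, a general $M$ lets us only compare distances to one of them, and $\mathsf 2$ records merely whether two values differ.
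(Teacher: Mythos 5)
Your proof is correct and takes essentially the same route as the paper's: both arguments rest on the cluster set $\bigcap_{U\in\tau_z}\overline{\varphi(U\cap K')}$ (nonempty and compact thanks to the properness of $M$ and boundedness of $\varphi$), extract two cluster values at distance $\ge\osc_z(\varphi)$, and then transfer --- via two $\varphi$-separated sets accumulating at $z$ for (1), via the non-expanding function $d_M(\cdot\,,p)$ for (2), and via an injective map $\mathsf 2\to M$ for (3). The only differences are cosmetic: you use balls of radius $c/3$ around the cluster values where the paper uses threshold sets $\{\varphi<\tfrac23a+\tfrac13b\}$ and $\{\varphi>\tfrac13a+\tfrac23b\}$, and you prove the exact identity $\diam C_z(\varphi)=\osc_z(\varphi)$ where the paper only records the inequality it needs.
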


\begin{proof} 1. Assume that a compactification $K$ is soft. To show that $K$ is $\IR$-soft, fix any bounded function $\varphi:K'\to\IR$ with $\osc_z\varphi>0$ for some $z\in  K\setminus K'$. Let $\tau_z$ be the family of open neighborhoods of $z$ in $K$. By definition of the oscillation $\e:=\osc_z\varphi$, for every $U\in\tau_z$ the closed bounded set $\overline{\varphi(U\cap K')}$ in $\IR$ has diameter $\ge\e$. Then the intersection $I:=\bigcap_{U\in\tau_z}\overline{\varphi(U\cap K')}$ also has diameter $\ge \e$ and we can find two points $a,b\in I$ with $b-a=\e$. It follows that the sets
$$
A:=\{x\in X:\varphi(x)<\tfrac23a+\tfrac13b\}\mbox{ \ and \ }B:=\{x\in X:\varphi(x)>\tfrac13a+\tfrac23b\}
$$contain the point $z$ in their closures. Since the compactification $K$ is soft, there exists a homeomorphism $g\in\Homeo(K,K')$ such that the set $\Omega:=\{x\in A:g(x)\in B\}$ is infinite. Then for any $x\in\Omega$ we have
$$|\varphi(g(x))-\varphi(x)|\ge\varphi(g(x))-\varphi(x)>\tfrac13a+\tfrac23b-(\tfrac23a+\tfrac13b)=\tfrac13(b-a)=\tfrac13\e>0,$$
witnessing that the compactification $K$ is $\IR$-soft.
\smallskip

2. Assume that a compactification $K$ is $\IR$-soft. To prove that it is $M$-soft, fix any bounded function $\varphi:K'\to M$ such that $\e:=\osc_z(\varphi)>0$ for some $z\in K$. Let $\tau_z$ be the family of neighborhoods of $z$ in $K$. It follows that for any $U\in\tau_z$ the image $\varphi(U\cap K')$ is a bounded set of diameter $\ge\e$ in $M$. The properness of $M$ ensures that the intersection $I=\bigcap_{U\in\tau_z}\overline{\varphi(U\cap K')}$ is a compact set of diameter $\ge\e$ in $M$. By the compactness of $I$, there are points $a,b\in I$ with $d_M(a,b)\ge\e$. Observe that $f:M\to\IR$, $f:x\mapsto d_M(a,x)$, is a non-expanding function with  $|f(a)-f(b)|=d_M(a,b)\ge\e$. It follows that the function $f\circ\varphi:K'\to \IR$ has $\osc_z(f\circ\varphi)\ge\e>0$. By the $\IR$-softness of $K$, there exists a permutation $g\in G_{X,K}$ and a positive real number $\delta$ such that the set $\{x\in K':|f\circ\varphi(g(x))-f\circ\varphi(x)|>\delta\}$ is infinite. Since the function $f$ is non-expanding, the set $\{x\in K':d_M(\varphi(g(x)),\varphi(x))>\delta\}\supseteq  \{x\in K':|f\circ\varphi(g(x))-f\circ\varphi(x)|>\delta\}$ is infinite, too.
\smallskip

3. Assume that a compactification $K$ is $M$-soft. To show that $K$ is $\mathsf 2$-soft, fix any function $\varphi:K'\to \mathsf 2$ with $\osc_z(\varphi)>0$ for some $z\in K$. By definition, the proper space $M$ contains more than one point and hene admts an injective map $f:\mathsf 2\to M$. Then the map $f\circ\varphi:K'\to M$ has $\osc_z(f\circ\varphi)>0$ and by the $M$-softness of $K$, there exists a permutation $g\in G_{X,K}$ and $\e>0$ such that the set $\{x\in K':d_M(f\circ\varphi\circ g(x),f\circ \varphi(x))>\delta\}$ is infinite. Taking into account that for two points $x,y\in \mathsf 2$ the inequality $d_M(f(x),f(y))>0$ is equivalent to $|x-y|>\frac12$, we conlcude that the set   $\{x\in K':|\varphi\circ g(x)-\varphi(x)|>\frac12\}$ contains $\{x\in K':d_M(f\circ\varphi\circ g(x),f\circ \varphi(x))>\delta\}$ and hence is infinite.
\end{proof}

Proposition~\ref{p:sss} implies that for any proper metric space $M$ and any compactification we have the implications
$$\mbox{soft $\Ra$ $\IR$-soft $\Ra$ $M$-soft $\Ra$ $\mathsf 2$-soft}.$$

\begin{example} The Stone-\v Cech compactification $\beta\w$ of the countable discrete space $\w$ is soft.
\end{example}

\begin{proof} The softness of $\beta\w$ follows from the well-known fact \cite[3.6.2]{Eng} that disjoint sets $A,B\subseteq\w$ have disjoint closures in $\beta\w$.
\end{proof}

\begin{proposition}\label{p:ss} A compactification $K$ is soft if for any disjoint sets $A,B\subseteq K'$ with $\bar A\cap\bar B\ne\emptyset$ there are sequences $\{a_n\}_{n\in\w}\subseteq A$ and $\{b_n\}_{n\in\w}\subseteq B$ that converge to some point $z\in K$.\end{proposition}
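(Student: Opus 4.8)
The plan is to produce, for given disjoint $A,B\subseteq K'$ with $\bar A\cap\bar B\neq\emptyset$, a single homeomorphism $h\in\Homeo(K,K')$ that transposes $a_n$ and $b_n$ for infinitely many $n$, where $\{a_n\}_{n\in\w}\subseteq A$ and $\{b_n\}_{n\in\w}\subseteq B$ are the sequences supplied by the hypothesis, both converging to a common point $z\in K$. First I would observe that $z$ must be a \emph{non-isolated} point of $K$: if $z\in K'$ were isolated, then, $K$ being Hausdorff, both sequences would be eventually equal to $z$, forcing $z\in A\cap B$ and contradicting disjointness. For the same reason each value is attained only finitely often in each sequence (an infinitely repeated value would be a second limit, hence equal to $z$, which is impossible since the terms are isolated while $z$ is not), so after passing to subsequences I may assume that $(a_n)$ and $(b_n)$ are injective; since $A\cap B=\emptyset$ we also have $a_n\neq b_m$ for all $n,m$, and thus the points of $\{a_n\}_{n\in\w}\cup\{b_n\}_{n\in\w}$ are pairwise distinct.

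Next I would define the involution $h:K\to K$ by
\[
h(x)=\begin{cases} b_n & \text{if }x=a_n\text{ for some }n\in\w,\\ a_n & \text{if }x=b_n\text{ for some }n\in\w,\\ x & \text{otherwise.}\end{cases}
\]
This $h$ is a bijection fixing every point outside the discrete set $\{a_n,b_n:n\in\w\}$; in particular $h$ fixes every non-isolated point and maps $K'$ onto itself, so once $h$ is shown to be continuous it will automatically belong to $\Homeo(K,K')$ (a continuous bijection of a compact Hausdorff space onto itself is a homeomorphism, and $h$ is its own inverse). Continuity at the isolated points is free, since there every map is continuous.

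The hard part will be checking continuity at the non-isolated points, which is exactly where the convergence hypothesis is used. At a non-isolated point $w\neq z$ I would separate $w$ from $z$ by disjoint open sets; since $a_n\to z$ and $b_n\to z$, only finitely many terms of the two sequences lie near $w$, and shrinking the neighbourhood to avoid those finitely many isolated points makes $h$ coincide with the identity there, hence continuous. At the limit point $w=z$ I would argue directly: given a neighbourhood $V$ of $z$, choose $N\in\w$ with $a_n,b_n\in V$ for all $n\ge N$, then pick a neighbourhood $U\subseteq V$ of $z$ missing the finitely many points $a_0,\dots,a_{N-1},b_0,\dots,b_{N-1}$; for $x\in U$ either $x$ is fixed by $h$ (so $h(x)=x\in U\subseteq V$) or $x=a_n$ or $x=b_n$ with $n\ge N$, whence $h(x)\in\{a_n,b_n\}\subseteq V$. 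Thus $h(U)\subseteq V$ and $h$ is continuous at $z$.

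Finally, with $h\in\Homeo(K,K')$ in hand, the set $\{x\in A:h(x)\in B\}$ contains all the pairwise distinct points $a_n$ (because $h(a_n)=b_n\in B$) and is therefore infinite, which is precisely the condition required for the softness of $K$.
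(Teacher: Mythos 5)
Your proof is correct and follows essentially the same route as the paper: after reducing to injective sequences converging to the non-isolated point $z$, both construct the involution swapping $a_n$ and $b_n$ and note that $\{x\in A: h(x)\in B\}\supseteq\{a_n\}_{n\in\w}$ is infinite. The only difference is that the paper dismisses the verification that this involution is a homeomorphism with ``it can be shown that,'' whereas you supply the continuity check at $z$ and at the other non-isolated points, which is a welcome completion of the argument.
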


\begin{proof} Given disjoint sets $A,B\subseteq K'$ with $\bar A\cap\bar B\ne\emptyset$ find sequences $\{a_n\}_{n\in\w}\subseteq A$ and $\{b_n\}_{n\in\w}\subseteq B$ that converge to some point $z\in K$. Since the sets $A,B$ are disjoint, the point $z$ is not isolated. So, we can assume that $a_n\ne a_m$ and $b_n\ne b_m$ for all $n\ne m$. It can be shown that the bijective function $g:K\to K$ defined by
$$g(x)=\begin{cases}
b_n&\mbox{if $x=a_n$};\\
a_n&\mbox{if $x=b_n$};\\
x&\mbox{otherwise};
\end{cases}
$$ is a homeomorphism of $K$ and $g\in\Homeo(K,K')$. For this homeomorphism the set $\{x\in A:g(x)\in B\}$ contains the set $\{a_n\}_{n\in\w}$ and hence is infinite.
\end{proof}

We recall that a topological space $X$ is {\em Fr\'echet-Urysohn} if for any set $A\subseteq X$ and point $x\in \bar A\setminus A$ there exists a sequence $\{a_n\}_{n\in\w}$ that converges to $x$. Proposition~\ref{p:ss} implies:

\begin{corollary}\label{c:fu} A compactification $K$ is soft if  the space $K$ is Fr\'echet-Urysohn.
\end{corollary}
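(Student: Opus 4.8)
The plan is to deduce the statement directly from Proposition~\ref{p:ss}, whose hypothesis asks, for every pair of disjoint sets $A,B\subseteq K'$ with $\bar A\cap\bar B\ne\emptyset$, for a pair of sequences in $A$ and in $B$ converging to a common point of $K$. So it suffices to show that the Fr\'echet-Urysohn property of $K$ supplies exactly such sequences, after which softness is immediate from that proposition.

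First I would fix disjoint sets $A,B\subseteq K'$ with $\bar A\cap\bar B\ne\emptyset$ and choose a point $z\in\bar A\cap\bar B$. The one point requiring a little care is to verify that $z$ lies in $\bar A\setminus A$ and in $\bar B\setminus B$, since the Fr\'echet-Urysohn property only yields a convergent sequence at a point of a closure that is not already in the set itself. Here I would argue that $z$ cannot be isolated: if $z\in K'$, then $\{z\}$ is open, so $z\in\bar A$ forces $z\in A$ and $z\in\bar B$ forces $z\in B$, contradicting the disjointness of $A$ and $B$. Hence $z\in K\setminus K'$; in particular $z\notin A$ and $z\notin B$, so $z\in(\bar A\setminus A)\cap(\bar B\setminus B)$.

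Next I would invoke the Fr\'echet-Urysohn property of $K$ twice. Since $z\in\bar A\setminus A$, there is a sequence $\{a_n\}_{n\in\w}\subseteq A$ converging to $z$; since $z\in\bar B\setminus B$, there is a sequence $\{b_n\}_{n\in\w}\subseteq B$ converging to the same point $z$. These two sequences are precisely the data required by Proposition~\ref{p:ss}, and that proposition then yields the softness of $K$.

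I do not expect a serious obstacle here: the argument is a routine verification that the Fr\'echet-Urysohn condition furnishes the converging sequences demanded by Proposition~\ref{p:ss}. The only genuinely substantive step is the observation that the chosen point $z$ is non-isolated, which is exactly where the assumptions that $A$ and $B$ are \emph{disjoint} subsets of the isolated-point set $K'$ are used; everything else is an immediate application of the definitions.
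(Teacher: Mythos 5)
Your proof is correct and follows exactly the route the paper intends: the paper derives Corollary~\ref{c:fu} directly from Proposition~\ref{p:ss}, leaving implicit precisely the routine verification you spell out (that a common closure point $z$ of the disjoint sets $A,B\subseteq K'$ is non-isolated, hence lies in $(\bar A\setminus A)\cap(\bar B\setminus B)$, so the Fr\'echet--Urysohn property yields the two required convergent sequences). Nothing is missing; your write-up simply makes the paper's one-line deduction explicit.
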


Since each first-countable space is Fr\'echet-Urysohn, Corollary~\ref{c:fu} implies another corollary.

\begin{corollary} Each first-countable compactification  is soft.
\end{corollary}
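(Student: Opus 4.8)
The plan is to reduce the statement to Corollary~\ref{c:fu}, which already establishes that every Fr\'echet-Urysohn compactification is soft. Thus it suffices to verify the purely topological implication that every first-countable space is Fr\'echet-Urysohn; once this is in hand, the corollary follows by immediate application of Corollary~\ref{c:fu} to the compact Hausdorff space $K$.

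To carry out the reduction, I would argue as follows. Let $K$ be a first-countable compactification, fix a set $A\subseteq K$ and a point $x\in\bar A\setminus A$. By first-countability, $x$ has a countable neighborhood base, which I may assume to be a decreasing sequence $\{U_n\}_{n\in\w}$ of open neighborhoods of $x$ with $U_{n+1}\subseteq U_n$. Since $x\in\bar A$, each $U_n$ meets $A$, so I can choose a point $a_n\in A\cap U_n$ for every $n\in\w$. The nesting of the $U_n$ then forces the sequence $\{a_n\}_{n\in\w}$ to converge to $x$: any open neighborhood $O$ of $x$ contains some $U_N$, hence contains $a_n$ for all $n\ge N$. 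This witnesses that $K$ is Fr\'echet-Urysohn.

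With Fr\'echet-Urysohness established, Corollary~\ref{c:fu} applies directly and yields the softness of $K$, completing the proof. I do not expect any genuine obstacle here: the argument is a routine chain through an already-proved corollary, and the only nontrivial ingredient is the elementary and standard fact that first-countable spaces are Fr\'echet-Urysohn, whose verification via a decreasing neighborhood base is entirely mechanical.
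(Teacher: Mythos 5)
Your proof is correct and is exactly the paper's argument: the paper derives this corollary from Corollary~\ref{c:fu} by invoking the standard fact that first-countable spaces are Fr\'echet-Urysohn, which you verify with the usual decreasing-neighborhood-base argument.
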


By $\mathfrak p$ we denote the smallest cardinality $|\F|$ of a family $\F$ of infinite subsets of $\w$ such that
\begin{itemize}
\item for any $F_1,\dots,F_n\in\F$ the intersection $\bigcap_{i=1}^nF_i$ is infinite;
\item for any infinite subset $I\subseteq\w$ there exists a set $F\in\F$ such that $I\setminus F$ is infinite.
\end{itemize}
It is known  that $\w_1\le\mathfrak p\le\mathfrak b\le\mathfrak c$, and $\mathfrak p=\mathfrak c$ under Martin's Axiom (see, \cite{Blass}, \cite{vD}, \cite{Vau}).

We recall that the {\em character} $\chi(x;X)$ of a point $x$ in a topological space $X$ is the smallest cardinality of a neighborhood base at $x$. The cardinal $$\chi(X):=\sup_{x\in X}\chi(x;X)$$ is called the {\em character} of the space $X$. The definition of the cardinal $\mathfrak p$ implies the following known fact.

\begin{proposition}\label{p:p2} Let $A$ be a countable set in a topological space $X$ and $x\in\bar A$. If $\chi(x;X)<\mathfrak p$, then the set $A$ contains a sequence that converges to $x$.
\end{proposition}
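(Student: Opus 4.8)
The plan is to recognize the stated definition of $\mathfrak p$ as the usual pseudo-intersection number and to feed it a family of ``index sets'' coming from a neighborhood base at $x$. The key consequence of the definition that I would isolate first is the following: \emph{if $\F$ is a family of infinite subsets of $\w$ such that $\bigcap_{i=1}^nF_i$ is infinite for all $F_1,\dots,F_n\in\F$, and $|\F|<\mathfrak p$, then there is an infinite set $P\subseteq\w$ with $P\setminus F$ finite for every $F\in\F$.} This is immediate from the two bullets defining $\mathfrak p$: a family of size $<\mathfrak p$ satisfying the first bullet (finite intersection property) must violate the second, and the witness to that violation is exactly such a $P$.

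First I would fix an enumeration $A=\{a_n:n\in\w\}$ and a neighborhood base $\{U_\alpha:\alpha<\kappa\}$ at $x$ with $\kappa=\chi(x;X)<\mathfrak p$, and set $F_\alpha:=\{n\in\w:a_n\in U_\alpha\}$. Since $x\in\bar A$, every $U_\alpha$ meets $A$, so every $F_\alpha$ is nonempty. I would then split into two cases according to whether some neighborhood of $x$ meets $A$ in a finite set.

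In the degenerate case, suppose $U_\alpha\cap A$ is finite for some $\alpha$. I claim that some single point $a\in U_\alpha\cap A$ lies in every neighborhood of $x$: otherwise each of the finitely many points of $U_\alpha\cap A$ could be excluded by a neighborhood of $x$, and intersecting these with $U_\alpha$ would produce a neighborhood of $x$ disjoint from $A$, contradicting $x\in\bar A$. Then the constant sequence at $a$ converges to $x$, and we are done. In the main case every neighborhood of $x$ meets $A$ in an infinite set, so every $F_\alpha$ is infinite. The family $\F:=\{F_\alpha:\alpha<\kappa\}$ then has the finite intersection property: for $\alpha_1,\dots,\alpha_n$ the set $\bigcap_iU_{\alpha_i}$ is a neighborhood of $x$, hence contains some base element $U_\beta$, whence the infinite set $F_\beta$ is contained in $\bigcap_iF_{\alpha_i}$. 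Since $|\F|\le\kappa<\mathfrak p$, the consequence isolated above yields an infinite $P\subseteq\w$ with $P\setminus F_\alpha$ finite for all $\alpha$. Enumerating $P=\{n_0<n_1<\cdots\}$, the sequence $(a_{n_k})_{k\in\w}$ converges to $x$: given a neighborhood $V$ of $x$, pick $\alpha$ with $U_\alpha\subseteq V$; then all but finitely many $n\in P$ lie in $F_\alpha$, i.e.\ $a_n\in U_\alpha\subseteq V$, so all but finitely many terms $a_{n_k}$ lie in $V$.

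The only genuinely delicate point, and the one I would treat most carefully, is the dichotomy that guarantees the members $F_\alpha$ are infinite: without the degenerate-case argument the family $\F$ need not consist of infinite sets, and the characterization of $\mathfrak p$ would not apply. The translation between the combinatorial pseudo-intersection $P$ and honest topological convergence of $(a_{n_k})$ is then routine, as is the verification of the finite intersection property, which uses only that $\{U_\alpha\}$ is a base and that finite intersections of neighborhoods are neighborhoods.
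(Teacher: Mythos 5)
Your proof is correct, and it supplies exactly the standard argument that the paper leaves implicit: the paper offers no proof of Proposition~\ref{p:p2}, merely asserting that it is a known fact following from the definition of $\mathfrak p$. Your reduction to a pseudo-intersection of the trace sets $F_\alpha=\{n:a_n\in U_\alpha\}$, together with the careful degenerate-case dichotomy ensuring the $F_\alpha$ are infinite, is precisely the intended reasoning, so there is nothing to compare beyond noting that you have filled in the omitted details correctly.
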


Proposition~\ref{p:p2} and \ref{p:ss} imply the following corollary. 

\begin{corollary}\label{c:p1} Each separable compactification of character $<\mathfrak p$ is  soft.
\end{corollary}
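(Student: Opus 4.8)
Each separable compactification of character $<\mathfrak p$ is soft.

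Let me sketch the proof.

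The plan is to deduce this directly from Propositions \ref{p:p2} and \ref{p:ss}, which together supply exactly the two ingredients needed. First I would unpack the definitions: let $K$ be a separable compactification with $\chi(K)<\mathfrak p$, and let $X=K'$ be its dense set of isolated points. Separability of $K$ combined with the fact that $X$ is dense and discrete forces $X$ to be countable, since any dense subset must meet every nonempty open set and the isolated points are themselves open. This countability is the feature that lets Proposition \ref{p:p2} apply to subsets of $X$.

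To verify the hypothesis of Proposition \ref{p:ss}, I would take arbitrary disjoint sets $A,B\subseteq K'=X$ with $\bar A\cap\bar B\ne\emptyset$, and fix a point $z\in\bar A\cap\bar B$. Both $A$ and $B$ are countable (being subsets of the countable set $X$), and $z\in\bar A$ as well as $z\in\bar B$. The character bound gives $\chi(z;K)\le\chi(K)<\mathfrak p$. Applying Proposition \ref{p:p2} to the countable set $A$ and the point $z\in\bar A$, I obtain a sequence $\{a_n\}_{n\in\w}\subseteq A$ converging to $z$; applying it again to $B$ and $z\in\bar B$ yields a sequence $\{b_n\}_{n\in\w}\subseteq B$ converging to the same point $z$. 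Thus the hypothesis of Proposition \ref{p:ss} is met, and that proposition concludes that $K$ is soft.

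There is essentially no obstacle here: the statement is a clean corollary whose entire content is the combination of two already-established facts. The only point requiring the slightest care is the observation that separability delivers countability of the set of isolated points (so that Proposition \ref{p:p2}, stated for countable $A$, is applicable to $A$ and $B$) and that the single point $z\in\bar A\cap\bar B$ serves as the common limit for both sequences, which is precisely the form required by Proposition \ref{p:ss}. The character hypothesis $\chi(K)<\mathfrak p$ bounds $\chi(z;K)$ for the relevant point $z$, feeding directly into Proposition \ref{p:p2}.
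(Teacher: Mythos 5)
Your proof is correct and follows exactly the route the paper intends: the paper's entire proof is the one-line observation that Propositions~\ref{p:p2} and \ref{p:ss} imply the corollary, and you have supplied precisely the missing details (separability plus discreteness of $K'$ gives countability of $K'$, then Proposition~\ref{p:p2} applied at a point $z\in\bar A\cap\bar B$ of character $<\mathfrak p$ produces the two sequences required by Proposition~\ref{p:ss}).
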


Unifying Corollaries~\ref{c:main} and \ref{c:fu}, we obtain

\begin{corollary}\label{c:FU2} A compact Hausdorff space $K$ is homeomorphic to the Higson compactification of some finitary coarse space if $K$ is Fr\'echet-Urysohn and has dense set of isolated points.
\end{corollary}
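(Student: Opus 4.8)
The plan is to recognize this statement as nothing more than the concatenation of three earlier results, so the whole argument is a chain of implications with essentially no new content. The first thing I would do is unpack the hypotheses in the paper's own vocabulary: the assumption that the set $K'$ of isolated points of $K$ is dense in $K$ is precisely the definition of $K$ being a \emph{compactification} (in the sense introduced in Section~\ref{s:P}). Thus $K$ is a compactification to which all of the machinery of Corollaries~\ref{c:main} and \ref{c:fu} and of Proposition~\ref{p:sss} applies.

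Next I would invoke Corollary~\ref{c:fu}: since $K$ is Fr\'echet-Urysohn, this corollary immediately yields that the compactification $K$ is \emph{soft}. I would then feed this into Proposition~\ref{p:sss}(1), which gives the implication \textbf{soft} $\Ra$ \textbf{$\IR$-soft}; hence $K$ is $\IR$-soft. Finally I would apply the ``if'' direction of Corollary~\ref{c:main}, which states that any $\IR$-soft compactification is homeomorphic to the Higson compactification of some finitary coarse space. Chaining these three steps,
$$\text{Fr\'echet-Urysohn}\ \Ra\ \text{soft}\ \Ra\ \IR\text{-soft}\ \Ra\ \text{Higson compactification of a finitary coarse space},$$
delivers exactly the desired conclusion.

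Since each individual implication has already been established, there is no genuine obstacle to overcome; the only point requiring care is bookkeeping of definitions, namely confirming that ``compact Hausdorff space with dense set of isolated points'' is literally the definition of a \emph{compactification}, so that Corollaries~\ref{c:main}, \ref{c:fu} and Proposition~\ref{p:sss} are applicable without any extra hypothesis. The permutation coarse space $(X,\E_{X,K})$ on $X=K'$ produced along the way (via Theorem~\ref{t:main2} inside Corollary~\ref{c:main}) is the finitary coarse space witnessing the claim, and its Higson compactification $h_\IR(X,\E_{X,K})$ is homeomorphic to $K$ by the $\IR$-equivalence established there together with Proposition~1.2(3) identifying $\IR$-equivalence with equivalence of compactifications.
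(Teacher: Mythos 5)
Your proposal is correct and follows essentially the same route as the paper, which obtains this corollary precisely by ``unifying'' Corollary~\ref{c:main} with Corollary~\ref{c:fu}, the implication soft $\Ra$ $\IR$-soft from Proposition~\ref{p:sss}(1) being the (implicit) bridge that you have simply spelled out. Your closing remark identifying $\IR$-equivalence with equivalence of compactifications is also the right bookkeeping and matches the paper's Proposition in Section~\ref{s:P}.
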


Also, Corollaries~\ref{c:CHR}, \ref{c:CH0} and \ref{c:fu} imply the following CH-improvements of Corollary~\ref{c:FU2}.

\begin{corollary} Under $\as=\mathfrak c$, a separable compact Hausdorff space $K$ is homeomorphic to the Higson compactification of some cellular finitary coarse space if the space $K$ is Fr\'echet-Urysohn and has dense set of isolated points.
\end{corollary}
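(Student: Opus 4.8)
The plan is to observe that this corollary is a straightforward assembly of results already proved in this section together with the preceding one, so no new argument is required; the entire substance resides in Theorem~\ref{t:mainCH} (equivalently, in Corollary~\ref{c:CHR}). I would begin by noting that, since $K$ is a compact Hausdorff space whose set $K'$ of isolated points is dense, $K$ is a \emph{compactification} in the sense of Section~\ref{s:P}. Moreover, separability of $K$ forces $K'$ to be countable, because every isolated point must lie in each dense subset of $K$, so $K'$ is contained in a countable dense set.

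Next I would chain the softness implications established earlier. The Fr\'echet-Urysohn hypothesis lets me invoke Corollary~\ref{c:fu} to conclude that $K$ is \emph{soft}. The first item of Proposition~\ref{p:sss} then upgrades this via the implication soft $\Ra$ $\IR$-soft, so that $K$ is a separable $\IR$-soft compactification.

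Finally, under the assumed equality $\as=\mathfrak c$, I would feed this separable $\IR$-soft compactification into Corollary~\ref{c:CHR}, obtaining a cellular finitary coarse structure $\E$ on $K'$ for which $K$ is homeomorphic to the Higson compactification $h_\IR(K',\E)$. This is precisely the asserted conclusion.

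I do not anticipate any genuine obstacle here: the only point to verify is that the hypotheses of Corollary~\ref{c:CHR} are met, namely separability (given directly) and $\IR$-softness (extracted from the Fr\'echet-Urysohn property through Corollary~\ref{c:fu} and Proposition~\ref{p:sss}). Should one wish to record the parallel binary statement for zero-dimensional $K$, the identical chain ends instead at the $\mathsf 2$-soft conclusion of Proposition~\ref{p:sss}, which feeds into Corollary~\ref{c:CH0}.
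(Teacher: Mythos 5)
Your proposal is correct and matches the paper's own argument exactly: the paper derives this corollary by combining Corollary~\ref{c:CHR} with Corollary~\ref{c:fu} (via the implication soft $\Ra$ $\IR$-soft from Proposition~\ref{p:sss}), precisely the chain you describe. Your remark that separability makes $K'$ countable is true but not needed, since Corollary~\ref{c:CHR} only requires separability of $K$, which is given.
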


\begin{corollary} Under $\as=\mathfrak c$, a zero-dimensional separable compact Hausdorff space $K$ is homeomorphic to the binary compactification of some cellular finitary coarse space if the space $K$ is Fr\'echet-Urysohn and has dense set of isolated points.
\end{corollary}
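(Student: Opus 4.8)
The plan is to chain together the results already established, since the hypotheses of this corollary were arranged precisely to feed the earlier machinery. First I would observe that a zero-dimensional separable compact Hausdorff space $K$ with a dense set of isolated points is, by definition, a \emph{compactification} in the sense of Section~\ref{s:P}: writing $K'$ for the set of isolated points of $K$, the density of $K'$ means exactly that $K$ is a compactification of the discrete space $K'$. Moreover, the separability of $K$ forces $K'$ to be countable, which is the hypothesis on which Theorem~\ref{t:mainCH} and hence Corollary~\ref{c:CH0} rely.

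Next I would establish that $K$ is $\mathsf 2$-soft by passing through the softness hierarchy. Since $K$ is Fr\'echet-Urysohn, Corollary~\ref{c:fu} applies directly and yields that the compactification $K$ is soft. Then the implication chain of Proposition~\ref{p:sss}, namely soft $\Ra$ $\IR$-soft $\Ra$ $M$-soft $\Ra$ $\mathsf 2$-soft, carries softness down to $\mathsf 2$-softness (reading the chain with $M=\mathsf 2$, or simply composing all three implications).

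Finally I would feed this into Corollary~\ref{c:CH0}. Under the set-theoretic assumption $\as=\mathfrak c$, that corollary asserts that every zero-dimensional separable $\mathsf 2$-soft compactification is homeomorphic to the binary compactification $h_{\mathsf 2}(X,\E)$ of the set $X$ of its isolated points endowed with a suitable cellular finitary coarse structure $\E$. Having verified all three of its hypotheses (zero-dimensionality, separability, and $\mathsf 2$-softness), the desired conclusion follows at once.

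Since the substantive content was carried out earlier, in Theorem~\ref{t:mainCH} and Proposition~\ref{p:sss}, I do not expect a genuine obstacle here; the proof is a matter of matching hypotheses. The only points requiring a moment's care are the bookkeeping observations that ``dense set of isolated points'' coincides with being a compactification and that separability renders $K'$ countable, so that Corollary~\ref{c:CH0} is legitimately applicable. I would also note for consistency that the binary compactification produced is automatically zero-dimensional, in agreement with the zero-dimensionality of $K$.
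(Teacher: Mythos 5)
Your proof is correct and follows exactly the paper's route: the authors derive this corollary by combining Corollary~\ref{c:fu} (Fr\'echet--Urysohn implies soft), the implication chain of Proposition~\ref{p:sss} (soft $\Ra$ $\mathsf 2$-soft), and Corollary~\ref{c:CH0}. Your additional bookkeeping remarks (density of isolated points means $K$ is a compactification, separability makes $K'$ countable) are the same implicit checks the paper relies on.
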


\section{Examples of compactifications which are not soft}\label{s:nonsoft}

In this section we present some examples of compactifications which are not soft and hence are not homeomorphic to the Higson compactifications of finitary coarse spaces.

There are two sources of such examples. One source comes from compactifications that are composed with two overlapping  pieces with different topological properties.

\begin{proposition}\label{p:not1} Let $K$ be a compactification containing two disjoint sets $A,B\subseteq K'$ such that $\bar A\cap \bar B\ne\emptyset$ and for any infinite sets $I\subseteq A$ and $J\subseteq B$ with $\bar I\setminus I=\bar J\setminus J$ the spaces $\bar I$ and $\bar J$ are not homeomorphic. Then the compactification $K$ is not soft. If $A\cup B=K'$, then $K$ is not $\mathsf 2$-soft and hence is not homeomorphic to the Higson compactification of a finitary coarse space.
\end{proposition}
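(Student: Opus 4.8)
The plan is to isolate a single transfer lemma about homeomorphisms in $\Homeo(K,K')$ and then feed it into both definitions of softness. The lemma I would record first is: \emph{if $h\in\Homeo(K,K')$ and $I\subseteq K'$ is infinite, then, writing $J:=h(I)$, the restriction $h{\restriction}\bar I:\bar I\to\bar J$ is a homeomorphism and $\bar I\setminus I=\bar J\setminus J$.} The first half is immediate because homeomorphisms preserve closures, so $h(\bar I)=\overline{h(I)}=\bar J$. For the second half the point is that every point of $\bar I\setminus I$ is non-isolated in $K$ (an isolated point of $K$ lying in $\bar I$ must already belong to $I$, since $I\subseteq K'$), whence $\bar I\setminus I\subseteq K\setminus K'$; as $h$ fixes $K\setminus K'$ pointwise we get $h(\bar I\setminus I)=\bar I\setminus I$, while injectivity of $h$ gives $h(\bar I\setminus I)=h(\bar I)\setminus h(I)=\bar J\setminus J$. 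Comparing the two yields $\bar I\setminus I=\bar J\setminus J$. This is the conceptual heart of the argument; everything after it is an application of a definition.

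To show $K$ is not soft, I would argue by contradiction. Applying the definition of softness to the given disjoint sets $A,B$ (which satisfy $\bar A\cap\bar B\ne\emptyset$) produces $h\in\Homeo(K,K')$ such that $I:=\{x\in A:h(x)\in B\}$ is infinite. Setting $J:=h(I)\subseteq B$, the transfer lemma gives $\bar I\setminus I=\bar J\setminus J$ together with a homeomorphism $h{\restriction}\bar I$ of $\bar I$ onto $\bar J$. Thus $I\subseteq A$ and $J\subseteq B$ are infinite, $\bar I\setminus I=\bar J\setminus J$, and $\bar I$ is homeomorphic to $\bar J$, directly contradicting the hypothesis on $A,B$. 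Hence $K$ is not soft.

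To show $K$ is not $\mathsf 2$-soft when $A\cup B=K'$, I would test $\mathsf 2$-softness on the indicator $\varphi:K'\to\mathsf 2$ with $\varphi{\restriction}A\equiv 0$ and $\varphi{\restriction}B\equiv 1$, which is well defined since $A,B$ partition $K'$. For any $z\in\bar A\cap\bar B$ every neighborhood of $z$ meets both $A$ and $B$, so $\varphi$ takes both values arbitrarily close to $z$ and $\osc_z(\varphi)=1>0$. If $K$ were $\mathsf 2$-soft there would exist $g\in\Homeo(K,K')$ and $\e>0$ with $d_{\mathsf 2}(\varphi(g(x)),\varphi(x))\ge\e$, i.e.\ $\varphi(g(x))\ne\varphi(x)$, for infinitely many $x\in K'$. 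The set of such $x$ splits into $\{x\in A:g(x)\in B\}$ and $\{x\in B:g(x)\in A\}$, one of which is infinite. In the first case I take $I\subseteq A$ to be that infinite piece and $J:=g(I)\subseteq B$; in the second case I take $J\subseteq B$ to be that infinite piece and $I:=g(J)\subseteq A$. In either case the transfer lemma again supplies infinite $I\subseteq A$ and $J\subseteq B$ with $\bar I\setminus I=\bar J\setminus J$ and $\bar I$ homeomorphic to $\bar J$, contradicting the hypothesis. So $K$ is not $\mathsf 2$-soft. Finally, by Proposition~\ref{p:sss} every $\IR$-soft compactification is $\mathsf 2$-soft, so $K$ is not $\IR$-soft, and Corollary~\ref{c:main} then shows $K$ is not homeomorphic to the Higson compactification of any finitary coarse space.

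The only genuinely delicate point—the one I expect to be the main obstacle—is the identity $\bar I\setminus I=\bar J\setminus J$ in the transfer lemma: one has to notice that the remainder points of $\bar I$ are precisely the non-isolated points of $K$ swept into the closure, and that $h$ pins them down because it lies in $\Homeo(K,K')$. Once this is secured, both non-softness claims reduce to the routine case analyses above, and the passage to the Higson compactification is a direct invocation of the implication chain of Proposition~\ref{p:sss} together with Corollary~\ref{c:main}.
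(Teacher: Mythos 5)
Your proof is correct and takes essentially the same route as the paper: homeomorphisms in $\Homeo(K,K')$ fix the non-isolated points of $K$, hence carry $\bar I$ homeomorphically onto $\bar J$ while preserving the remainder $\bar I\setminus I=\bar J\setminus J$, contradicting the hypothesis; the $\mathsf 2$-soft case is handled via the characteristic function of $A$ and the conclusion via Proposition~\ref{p:sss} and Corollary~\ref{c:main}, exactly as in the paper. If anything, your explicit case analysis (one of $\{x\in A:g(x)\in B\}$, $\{x\in B:g(x)\in A\}$ is infinite, passing to $g^{-1}$ in the second case) is slightly more careful than the paper's proof, which simply asserts that $\{x\in A:h(x)\notin A\}$ is infinite.
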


\begin{proof} Assuming that $K$ is soft, we can find a homeomorphism $h\in\Homeo(K,K')$ such that the set $I=\{x\in A:h(x)\in B\}\subseteq A$ is infinite and  so is the set $J=h(I)$. Take any accumulation point $x\in\bar I\setminus I$ and observe that $x=h(x)\in\bar J\setminus J$. So, $\bar I\setminus I\subseteq\bar J\setminus J$. By analogy we can prove that $\bar J\setminus J\subseteq \bar I\setminus I$. Since $h(\bar I)=\bar J$, the spaces $\bar I,\bar J$ are homeomorphic, but this contradicts our assumption.

If $A\cup B=X$, then we can consider the characteristic function $\chi_A:X\to\mathsf 2$. Assuming that $K$ is $\mathsf 2$-soft, we can find a homeomorphism $h\in\Homeo(K,X)$ such that the set $I=\{x\in A:h(x)\notin A\}$ is infinite and  so is the set $J=h(I)$. Then $\bar I\setminus I=\bar J\setminus J$ and the spaces  $\bar J\setminus J\subseteq \bar I\setminus I$. Since $h(\bar I)=\bar J$, the spaces $\bar I,\bar J$ are homeomorphic, which contradicts our assumption. This contradiction shows that the compactification $K$ is not $\mathsf 2$-soft. By Proposition~\ref{p:sss}, $K$ is not $\IR$-soft and by Corollary~\ref{c:main},  $K$  is not homeomorphic to the Higson compactification of a finitary coarse space. 
\end{proof}

Proposition~\ref{p:not1} allows us to construct a more concrete example of a non-soft compactification.

\begin{example} Let $K$ be a compactification containing two disjoint sets $A,B\subseteq K'$ such that $\bar A\cap \bar B\ne\emptyset$, $\bar A$ is homeomorphic to $\beta\w$ and $\bar B$ is first-countable.
Then the compactification $K$ is not soft. If $A\cup B=K'$, then $K$ is not $\mathsf 2$-soft and hence is not homeomorphic to the Higson compactification of a finitary coarse space.
\end{example}

The other source of non-soft compactifications are compactifications $K$ with small permutation group $G_{K',K}$. For a set $X$ by $FS_{X}$ we denote the group of permutations $g:X\to X$ that have finite support $\supp(g):=\{x\in X:g(x)\ne x\}$.
It is clear that $FS_{K'}\subseteq G_{K',K}$ for any compactification $K$.
The Stone-\v Cech compactification $\beta X$ of a discrete space $X$ has $G_{X,\beta X}=FS_X$.

\begin{proposition} A compactification $K$ with $G_{K',K}=FS_{K'}$ is equal to the Higson compactification of its permutation coarse space $(K',\E_{K',K})$ if and only if $K=\beta K'$.
\end{proposition}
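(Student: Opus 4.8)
The plan is to reduce the whole biconditional to a single computation: under the hypothesis $G_{K',K}=FS_{K'}$, the Higson compactification of the permutation coarse space is exactly $\beta K'$. Once $h_\IR(K',\E_{K',K})=\beta K'$ is established, both implications become immediate, since the asserted equivalence then merely compares $K$ with $\beta K'$ on the one side and with $\beta K'$ on the other.

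First I would describe the coarse structure $\E_{K',K}=\E_{FS_{K'}}$ explicitly. For a finite set $F\subseteq FS_{K'}$ put $S:=\bigcup_{g\in F}\supp(g)$, which is a finite subset of $K'$. For every $x\in K'\setminus S$ each $g\in F$ fixes $x$, so $E_F[x]=\{x\}$. Hence every entourage $E\in\E_{K',K}$ is a finite perturbation of the diagonal, in the sense that $E[x]=\{x\}$ for all $x$ outside some finite set. The key step is then the identification $\so(K',\E_{K',K};\IR)=\mathsf b(K';\IR)$, the family of all bounded real-valued functions on $K'$. The nontrivial inclusion $\mathsf b(K';\IR)\subseteq\so(K',\E_{K',K};\IR)$ runs as follows: given any bounded $\varphi:K'\to\IR$, any $\e>0$, and any $E\in\E_{K',K}$, the finite set $S$ above satisfies $\diam\,\varphi(E[x])=0<\e$ for all $x\in K'\setminus S$; since bounded sets in a finitary coarse space are exactly the finite sets, this witnesses that $\varphi$ is slowly oscillating. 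The opposite inclusion holds by definition, as $\so$ consists of bounded functions.

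Finally I would conclude. By the defining construction of $h_\IR$, the continuous real-valued functions on $h_\IR(K',\E_{K',K})$ restrict on $K'$ to precisely $\so(K',\E_{K',K};\IR)$, which we have just shown to be $\mathsf b(K';\IR)$; but this is exactly the characteristic property of the Stone--\v Cech compactification $\beta K'$. Thus $h_\IR(K',\E_{K',K})$ and $\beta K'$ are $\IR$-equivalent, and by the earlier proposition asserting that $\IR$-equivalent compactifications are equivalent (its part (3)), they are equivalent, i.e. $h_\IR(K',\E_{K',K})=\beta K'$. The biconditional then reads $K=h_\IR(K',\E_{K',K})\iff K=\beta K'$, which now holds automatically. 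There is no serious obstacle here: the argument is a direct computation of the slowly oscillating functions, and the only point demanding a little care is the passage from equality of the restricted function algebras to equivalence of the compactifications, which is supplied verbatim by the $\IR$-equivalence proposition.
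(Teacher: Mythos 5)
Your proposal is correct and follows essentially the same route as the paper: you observe that under $G_{K',K}=FS_{K'}$ every entourage of $\E_{K',K}$ is a finite perturbation of the diagonal, deduce that every bounded function on $K'$ is slowly oscillating, identify the Higson compactification of the permutation coarse space with $\beta K'$, and then read off the biconditional. The only difference is that you spell out the support argument and the passage from $\so=\mathsf b$ to equivalence with $\beta K'$ in slightly more detail than the paper does.
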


\begin{proof} Let $X$ be the set of isolated points of the compactification $K$. By our assumption, $G_{X,K}=FS_X$. Then for any entourage $E\in E_{FS_X}$ the complement $E\setminus \Delta_X$ is finite, which implies that each bounded function $\varphi:K'\to\IR$ is slowly oscillating and then the Higson compactification $\bar X_{\E_{X,K}}$ of the coarse space $(X,\E_{X,K})$ is equivalent to  the Stone-\v Cech compactification of $X$. Now we see that $K=\beta X$ if and only if $\bar X_{\E_{X,K}}=\beta X$.
\end{proof}

We recall (see \cite{Blass}, \cite{vD}, \cite{Vau}) that $\mathfrak s$ is the smallest cardinality $|\F|$ of a family $\F$ of subsets of $\w$ such that for any infinite set $I\subseteq \w$ there exists a set $F\in\F$ such that both sets $I\cap F$ and $I\setminus F$ are infinite. By Theorem 6.1 in   \cite{vD}, each compact space of weight $<\mathfrak s$ is sequentially compact (so contains many non-trivial convergent sequences).

\begin{proposition} A compactification $K$ has $G_{K',K}\ne FS_{K'}$ if $K$ is infinite and one of the following conditions is satisfied:
\begin{enumerate}
\item $K$ contains a non-trivial convergent sequence of isolated points;
\item $K$ has weight $w(K)<\mathfrak s$;
\item $K$ is separable and has character $\chi(X)<\mathfrak p$.
\end{enumerate}
\end{proposition}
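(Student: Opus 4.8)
The plan is to prove that in each case $\Homeo(K,K')$ contains a homeomorphism whose restriction to $K'$ has infinite support; since the finitely supported permutations always satisfy $FS_{K'}\subseteq G_{K',K}$, any such homeomorphism witnesses $G_{K',K}\ne FS_{K'}$. I would first settle (1) by an explicit construction and then reduce (2) and (3) to (1). For (1), let $(x_n)_{n\in\w}$ be a non-trivial convergent sequence of isolated points with limit $z$; passing to a subsequence I may assume the $x_n$ are pairwise distinct, so that $z$ is non-isolated and $z\notin\{x_n\}_{n\in\w}$. Splitting the sequence into its even and odd parts, let $g\colon K\to K$ be the involution that swaps $x_{2n}\leftrightarrow x_{2n+1}$ for every $n$ and fixes every other point. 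This is exactly the swap homeomorphism produced in the proof of Proposition~\ref{p:ss} for the sets $A=\{x_{2n}\}_{n\in\w}$ and $B=\{x_{2n+1}\}_{n\in\w}$ (whose closures share the point $z$), and that argument already shows $g\in\Homeo(K,K')$. Since $g$ moves every $x_n$, its support is infinite, whence $g{\restriction}K'\in G_{K',K}\setminus FS_{K'}$.

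To reduce (2) to (1), recall that by Theorem~6.1 of \cite{vD} the hypothesis $w(K)<\mathfrak s$ makes $K$ sequentially compact. As $K$ is infinite and $K'$ is dense, the set $K'$ is infinite; choosing countably many pairwise distinct isolated points and passing to a convergent subsequence gives a sequence of distinct isolated points with some limit $z$. That limit cannot be one of the terms, because an isolated point admits only eventually-constant convergent sequences, so we obtain a non-trivial convergent sequence of isolated points and (1) applies. To reduce (3) to (1), note that separability forces $K'$ to be countable, since every isolated point belongs to each dense set; and $K$ being infinite is not discrete, so I may pick $z\in K\setminus K'$, with $z\in\overline{K'}$ by density. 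Because $\chi(z;K)\le\chi(K)<\mathfrak p$ and $K'$ is countable, Proposition~\ref{p:p2} yields a sequence in $K'$ converging to $z$; as $z\notin K'$ this sequence has infinitely many distinct terms, and after thinning it is a non-trivial convergent sequence of isolated points, so once more (1) applies.

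The only delicate point is verifying, in (1), that the swap $g$ is continuous at the limit $z$: given a neighbourhood $O$ of $z$, one removes from $O$ a suitable finite initial segment of the sequence so that the remaining neighbourhood of $z$ is carried into $O$ by $g$, using that $x_n\to z$ forces $x_{n\pm1}\to z$ as well. Away from $z$ the map is locally the identity, so continuity elsewhere and the homeomorphism property (via $g=g^{-1}$) are immediate; as noted, this verification is already contained in the proof of Proposition~\ref{p:ss}, and the reductions of (2) and (3) to (1) are then routine applications of \cite{vD} and Proposition~\ref{p:p2}.
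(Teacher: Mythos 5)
Your proof is correct and takes essentially the same route as the paper: the same even/odd swap involution (as in the proof of Proposition~\ref{p:ss}) settles case (1), and cases (2) and (3) are reduced to (1) exactly as in the paper, via sequential compactness from $w(K)<\mathfrak s$ (van Douwen) and via Proposition~\ref{p:p2}, respectively.
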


\begin{proof} 1. Assume that the compactification $K$ contains a convergent sequence $(x_n)_{n\in\w}$ consisting of pairwise distinct isolated points of $K$. It can be shown that the homeomorphism $h\in \mathcal H(K,K')$ defined by the formula
$$h(x)=\begin{cases}x_{2n+1}&\mbox{if $x=x_{2n}$ for some $n\in\w$};\\
x_{2n}&\mbox{if $x=x_{2n+1}$ for some $n\in\w$};\\
x&\mbox{otherwise};
\end{cases}
$$
determines the bijection $h{\restriction}K'\in G_{K',K}\setminus FS_{K'}$, which implies that $G_{K',K}\ne FS_{K'}$.
\smallskip

2. If $K$ has weight $w(K)<\mathfrak s$, then each sequence in $K$ has a convergent subsequence. Since $K$ is infinite and the set $K'$ of isolated points is dense in $K$, it contains a non-trivial convergent sequence. Now the preceding item applies.
\smallskip

3. If $K$ is separable and has $\chi(K)<\mathfrak p$, then the set $K'$ of isolated points is countable. Applying Proposition~\ref{p:p2}, we conclude that $K'$ contains a non-trivial sequence, convergent to some non-isolated point of $K$. The first item of this proposition implies that  $G_{K',K}\ne FS_{K'}$.
\end{proof}

By $\non(\M)$ we denote the smallest cardinality of a non-meager set in the Cantor cube $\{0,1\}^\w$.  It is known (see \cite{Blass}, \cite{Vau}) that the strict inequality $\non(\M)<\mathfrak c$ is consistent.

\begin{proposition}
 There exists a zero-dimensional compactification $c\IN$ of the discrete countable space $\IN$ such that $w(c\IN)\le\non(\mathcal M)$ and $G_{\IN,c\IN}=FS_{\IN}$. If $\non(\mathcal M)<\mathfrak c$, then the compactification $c\IN$ is not homeomorphic to the Higson compactification of a finitary coarse space.
\end{proposition}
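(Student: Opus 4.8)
The plan is to construct the compactification $c\IN$ as a subspace of a cube $\{0,1\}^{\kappa}$, where the coordinate functions are chosen so that (a) they generate the topology and separate points, giving a zero-dimensional compactification of small weight, and (b) the only self-homeomorphisms fixing the non-isolated points are the finitely supported permutations. The natural way to control the weight by $\non(\M)$ is to take a \emph{non-meager} set $D\subseteq\{0,1\}^{\w}$ of size $\non(\M)$ and use its elements as the defining family of $\{0,1\}$-valued functions on $\IN$. Concretely, I would fix a bijection identifying $\IN$ with $\w$ and, for each $d\in D$, regard $d$ itself as a characteristic function $\chi_d:\IN\to\{0,1\}$; the map $e:\IN\to\{0,1\}^{D}$, $e(n)=(d(n))_{d\in D}$, embeds $\IN$ as a discrete subspace, and I set $c\IN:=\overline{e(\IN)}$. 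By construction $w(c\IN)\le|D|=\non(\M)$, and $c\IN$ is zero-dimensional because it sits in a Cantor cube. One must check that $\IN$ is exactly the set of isolated points of $c\IN$ (equivalently, that no point of $\IN$ is a limit of others and that the remainder has empty interior), which will follow from a suitable genericity of $D$; choosing $D$ to also be an independent family, or intersecting with a fixed almost disjoint structure, makes $e(\IN)$ dense-in-itself-free.

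The heart of the argument is establishing $G_{\IN,c\IN}=FS_{\IN}$. Since $FS_{\IN}\subseteq G_{\IN,c\IN}$ always holds, I need the reverse inclusion: every $g\in\Homeo(c\IN,\IN)$ has finite support on $\IN$. Suppose not; then some $g$ moves infinitely many points, so there is an infinite set $I\subseteq\IN$ with $g(I)\cap I=\emptyset$ and $g{\restriction}I$ injective. A homeomorphism of $c\IN$ that fixes $c\IN\setminus\IN$ pointwise must preserve the convergence structure at every non-isolated point, hence it must preserve which subsets of $\IN$ accumulate to which remainder points. I would phrase this as: for every $d\in D$ the function $\chi_d$ is slowly oscillating / extends continuously, and $g$ must carry the ``trace'' of $I$ and of $g(I)$ to the same remainder points. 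The non-meagerness of $D$ is then invoked to produce, for the infinite sets $I$ and $g(I)$, some $d\in D$ whose restriction separates them infinitely often — i.e. $\chi_d\circ g$ differs from $\chi_d$ on an infinite subset of $I$, so that $\chi_d$ cannot extend continuously over the identified remainder point, contradicting that $g$ fixes the remainder. This is the step I expect to be the main obstacle: turning ``$D$ is non-meager'' into the combinatorial statement that no nontrivial infinite permutation of $\IN$ can be ``unseen'' by all of $D$. The clean route is a Baire-category argument — for a fixed infinite partial injection $g$, the set of $x\in\{0,1\}^{\w}$ that $g$ \emph{fails} to disturb (those $x$ with $x\circ g=x$ off a finite set) is meager; since there are only countably many relevant ``types'' of partial injection to handle after a diagonal reduction, a non-meager $D$ must meet the complement for each, yielding the required witness.

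Once $G_{\IN,c\IN}=FS_{\IN}$ is proved, the final clause is essentially automatic. A compactification with $G_{K',K}=FS_{K'}$ has the property that every bounded real function on $\IN$ is slowly oscillating with respect to $\E_{\IN,c\IN}=\E_{FS_{\IN}}$, so the Higson compactification of its permutation coarse space is the Stone--\v{C}ech compactification $\beta\IN$ (this is exactly the content of the proposition preceding the example). If $c\IN$ were homeomorphic to the Higson compactification of a \emph{finitary} coarse space, then by Corollary~\ref{c:main} it would be $\IR$-soft, hence by Proposition~\ref{p:sss} it would be $\mathsf 2$-soft; but when $\non(\M)<\mathfrak c$ the weight bound $w(c\IN)\le\non(\M)<\mathfrak c=w(\beta\IN)$ forces $c\IN\ne\beta\IN$, and I would argue directly that a $\mathsf 2$-soft compactification with $G_{\IN,c\IN}=FS_{\IN}$ must coincide with $\beta\IN$ (softness would require a nontrivial infinite permutation moving one of a pair of sets with touching closures, which $FS_{\IN}$ cannot supply), giving a contradiction. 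Thus $c\IN$ is not homeomorphic to the Higson compactification of any finitary coarse space.
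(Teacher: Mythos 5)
Your construction and both main arguments are essentially the paper's own: the paper takes a non-meager family $\A\subseteq[\IN]^{\le\w}$ of cardinality $\non(\M)$, embeds $\IN$ into the cube $\{0,1\}^{\A}$ via characteristic functions, and proves $G_{\IN,c\IN}=FS_{\IN}$ by exactly your Baire-category argument: for a fixed homeomorphism $h$ moving infinitely many isolated points, the family $\A_h=\{A\subseteq\IN:|\{x\in A:h(x)\notin A\}|=\w\}$ is a dense $G_\delta$, hence meets the non-meager family; the corresponding coordinate function extends continuously over $c\IN$ and takes different values at $z$ and at $h(z)$ for any accumulation point $z$ of $\{x\in A:h(x)\notin A\}$, contradicting $h(z)=z$. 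The final clause is also handled the same way, by softness plus the weight comparison $w(c\IN)\le\non(\M)<\mathfrak c=w(\beta\IN)$ (you route it through $\mathsf 2$-softness, the paper directly through $\IR$-softness; both work).

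However, one concrete step of your proposal would fail as written: your method for making the points of $\IN$ isolated in $c\IN$. If $D$ is an independent family (or consists of suitably \emph{generic} sets), then every finite pattern of coordinate values is realized by infinitely many integers, so every basic neighborhood of a point $e(n)$ in $\{0,1\}^{D}$ contains infinitely many other points of $e(\IN)$; that is, independence or genericity makes \emph{no} point of $e(\IN)$ isolated --- the exact opposite of what you need (and intersecting with an almost disjoint family does not obviously help either). The correct repair goes in the opposite, anti-generic direction: enlarge the family by adding all finite subsets of $\IN$ (singletons suffice for isolation; finite sets also give injectivity of the embedding). This keeps the cardinality equal to $\non(\M)$ and preserves non-meagerness, and the coordinate indexed by $\{n\}$ then isolates $e(n)$; this is precisely what the paper does. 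With that repair your proof goes through. A smaller remark: your appeal to ``countably many types of partial injection after a diagonal reduction'' is unnecessary --- the quantifier structure only requires, for each \emph{fixed} homeomorphism $g$, a single comeager set of sets disturbed by $g$, which any non-meager family must meet.
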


\begin{proof} Endow the family $[\IN]^{\le\w}$ of all subsets of $\IN$ with the natural compact metrizable topology, making $[\IN]^{\le\w}$ homeomorphic to the Cantor cube $\{0,1\}^\w$. A base of this topology consists of the sets $[F,E]=\{A\subseteq\IN:A\cap E=F\}$ where $F\subseteq E$ are finite subsets of $\IN$.

 Take any non-meager set $\A\subseteq [\IN]^{\le\w}$ of cardinality $|\A|=\non(\mathcal M)$. Replacing $\A$ by a larger family of the same cardinality, we can assume that $\A$ contains all finite subsets of $\IN$. For any set $A\in\A$ consider its characteristic function $\chi_A:\IN\to\{0,1\}$ (i.e., the unique function such that $\chi^{-1}_A(1)=A$). The functions $\chi_A$, $A\in\A$, determine the function $\chi_\A:\IN\to\{0,1\}^{\A}$, $\chi_\A:x\mapsto(\chi_A(x))_{A\in\A}$. 

Let $c\IN$ denote the closure of the set $\chi_\A(\IN)$  in the cube $\{0,1\}^\A$.
 Since the family $\A$ contains all finite subsets of $\IN$, the function $\chi_\A$ is injective, so we can identify $\IN$ with its image $\chi_\A(\IN)$ in $c\IN$ and consider $c\IN$ as a compactification of $\IN$.
 
 It is clear that $c\IN$ has weight $w(c\IN)\le|\A|=\non(\M)$. We claim that $G_{\IN,c\IN}=FS_{\IN}$. To derive a contradiction, assume that the compact space $c\IN$ admits a homeomorphism $h\in\Homeo(c\IN,\IN)$ such that the set $\{x\in\IN: h(x)\ne x\}$ is infinite. 
 
In the power-set $[\IN]^{\le\w}$ consider the subfamily
$$\A_h:=\big\{A\subseteq \IN:|\{x\in A:h(x)\notin A\}|=\w\big\}.$$
We claim that $\A_{h}$ is a dense $G_\delta$-set in $[\IN]^{\le\w}$.
For every $n\in\IN$ denote by $[\IN]^n$ the family of $n$-element subsets of $\IN$. For any sets $I,J\in[\IN]^n$ consider the open family
$$\A_{I,J}:=\{A\subseteq\IN: I\subseteq A,\;J\subseteq \IN\setminus A,\;h(I)=J\}$$in $[\IN]^{\le\w}$ and observe that the union $\bigcup_{I,J\in[\IN]^n}\A_{I,J}$ is dense in $[\IN]^{\le\w}$. Then the set
$$\A_{h}=\bigcap_{n\in\IN} \bigcup_{I,J\in[\IN]^n}\A_{I,J}$$is a dense $G_\delta$ in $[\IN]^{\le\w}$. Since the family $\A$ is not meager in $[\IN]^{\le\w}$, there exists a set $A\in\A\cap \A_{h}$.  By the definition of the topology of the space $c\IN$, the function $\chi_A:\IN\to\{0,1\}$ admits a continuous extension $\bar\chi_A:c\IN\to\{0,1\}$.

The definition of the family $\A_h$ guarantees that the set $\Omega=\{x\in A:h(x)\notin A\}$ is infinite. By the compactness of $c\IN$, there exists a point $z\in\bar\Omega\setminus\Omega\subseteq c\IN\setminus\IN$.
It follows that $\bar\chi_A(z)\subseteq \bar\chi_A(\bar \Omega)\subseteq\overline{\chi_A(\Omega)}=\{1\}$ and $\bar\chi_A(h(z))\in \bar\chi_A(\overline{h(\Omega)})\subseteq\bar \chi_A(\IN\setminus A)\subseteq\overline{\chi_A(\IN\setminus A)}=\{0\}$. But this contradicts the equality $h(z)=z$ following from $h\in\Homeo(c\IN,\IN)$. This contradiction shows that $G_{\IN,c\IN}=FS_{\IN}$.

Now assuming that $\non(\M)<\mathfrak c$, we prove that the compactification $c\IN$ is  not homeomorphic to the Higson corona of a finitary coarse space. To derive a contradiction, assume that $c\IN$ is  homeomorphic to the Higson corona of a finitary coarse space. By Lemma~\ref{l:soft}, the compactification $c\IN$ is $\IR$-soft.
Since $w(c\IN)\le\non(\M)<\mathfrak c=w(\beta\IN)$, the compactification  $c\IN$ is not homeomorphic to $\beta\IN$. Then there exists a bounded function $\varphi:\IN\to\IR$ that has no continuous extension to $c\IN$ and hence $\osc_z(\varphi)>0$ for some $z\in c\IN$. Since the compactification $c\IN$ is $\IR$-soft, there exist a homeomorphism $g\in \mathcal H(c\IN,\IN)$ and $\e>0$ such that the set $\{x\in \IN:|\varphi(g(x))-\varphi(x)|>\e\}$ is infinite. The homeomorphism $g$ determines a permutation $g{\restriction}\in G_{\IN,c\IN}\setminus FS_\IN$, which contradicts the equality $G_{\IN,c\IN}=FS_\IN$.
\end{proof} 

\section{Constructing a finitary coarse space with a given Higson or binary corona}\label{s:Hcor}

In this section we apply Theorem~\ref{t:main2} for constructing a finitary coarse space with a given Higson or binary corona. All compact spaces considered in this section are Hausdorff.

\begin{definition}\label{d:p} A compact Hausdorff space $K$ is called a {\em corona} if $K$ is homeomorphic to the remainder $c\mathbb N\setminus \mathbb N$ of suitable compactification $c\mathbb N$ of the discrete space $\mathbb N$ of positive integers. If the compactification $c\IN$ can be chosen to be soft (or $M$-soft for some proper metric space $M$), then the corona $K$ is called a {\em soft} (resp. {\em $M$-soft\/}).
\end{definition}

\begin{remark} Coronas can be equivalently defined as  continuous images of the remainder $\beta\IN\setminus\IN$ of the Stone-\v Cech compactification of $\IN$. Each corona $K$ has weight $w(K)\le\mathfrak c$ and cardinality $|K|\le 2^{\mathfrak c}$. It is known that the class of coronas  includes all separable compact spaces, all perfectly normal compacta (see, \cite{Przym}), and all compact spaces of weight $\le \w_1$ (see, \cite{Par} or \cite[1.3.4]{vM}). By a recent result of Hart \cite{Hart},  under CH each compact space of weight $\le\w_1$ is a soft corona. On the other hand, Bell \cite{Bell} constructed a consistent example of a separable compact space which is not a corona, and Dow observed that under his  principle (NT) introduced in \cite{Dow}, the compact Hausdorff space $K=\w_1+1+\w_1^*$ is not a soft corona, see Example in \cite{Hart}. 
\end{remark} 

The above remark and Proposition~\ref{p:sss} imply that for any compact Hausdorff space $K$ and any proper metric space $M$ we have the implications:
$$
\xymatrix{
\mbox{$\IR$-soft corona}\ar@{=>}[r]&\mbox{$M$-soft corona}\ar@{=>}[r]&\mbox{$\mathsf 2$-soft corona}\ar@{=>}[d]\\
\mbox{soft corona}\ar@{=>}[u]&\mbox{weight $\le\w_1$}\ar@{=>}[r]\ar_{\;\;+CH}[l]&\mbox{corona}\ar@{=>}[r]&\mbox{weight $\le\mathfrak c$}
}
$$

Definition~\ref{d:p} and Theorem~\ref{t:main2} imply the following characterizations. 

\begin{corollary}\label{c:corP} A compact Hausdorff space $K$ is homeomorphic to the Higson corona of a finitary coarse space $(X,\E)$ with $|X|=\w$ if and only if $K$ is an $\IR$-soft corona.
\end{corollary}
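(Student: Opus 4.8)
The plan is to establish Corollary~\ref{c:corP} by unwinding the definitions and combining the already-proved Theorem~\ref{t:main2} with the definition of a corona. The statement is a biconditional, so I would argue both directions. Throughout, let $\IN$ play the role of the countable index set, so that a finitary coarse space $(X,\E)$ with $|X|=\w$ can be taken (after a bijection) to live on $\IN$, and its $\IR$-corona $\hbar_\IR(X,\E)$ coincides with the remainder $h_\IR(X,\E)\setminus X$, since in a finitary coarse space all bounded sets are finite (as recorded in Section~\ref{s:def}).

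First I would prove the ``only if'' direction. Suppose $K$ is homeomorphic to the Higson corona $\hbar_\IR(X,\E)=h_\IR(X,\E)\setminus X$ of a finitary coarse space $(X,\E)$ with $X$ countable. Fixing a bijection $X\cong\IN$, the Higson compactification $c\IN:=h_\IR(X,\E)$ is a compactification of $\IN$ whose remainder $c\IN\setminus\IN$ is homeomorphic to $K$. By Lemma~\ref{l:soft}, the $\IR$-compactification $h_\IR(X,\E)$ of a finitary coarse space is $\IR$-soft. Hence $K$ is the remainder of the $\IR$-soft compactification $c\IN$, which by Definition~\ref{d:p} means precisely that $K$ is an $\IR$-soft corona.

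Next I would prove the ``if'' direction, which is where the real content lies. Assume $K$ is an $\IR$-soft corona. By Definition~\ref{d:p} there is an $\IR$-soft compactification $c\IN$ of $\IN$ whose remainder $c\IN\setminus\IN$ is homeomorphic to $K$. Let $X$ be the set of isolated points of $c\IN$; since $\IN$ is dense in $c\IN$ and consists of isolated points, I would note that $X=\IN$ (a compactification of a discrete space has that discrete space as exactly its isolated points, as observed in Section~\ref{s:H}). Now apply Theorem~\ref{t:main2} with $M=\IR$: because $c\IN$ is $\IR$-soft, it is $\IR$-equivalent to the Higson compactification $h_\IR(X,\E_{X,c\IN})$ of its permutation coarse space $(X,\E_{X,c\IN})$. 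By Proposition (the $\IR$-equivalence part, statement~(3)), $\IR$-equivalence of the two compactifications $c\IN$ and $h_\IR(X,\E_{X,c\IN})$ over the same set $X$ upgrades to genuine equivalence, i.e.\ there is a homeomorphism fixing $X$ pointwise. This homeomorphism then maps remainders to remainders, so $c\IN\setminus\IN$ is homeomorphic to $h_\IR(X,\E_{X,c\IN})\setminus X=\hbar_\IR(X,\E_{X,c\IN})$. Since $(X,\E_{X,c\IN})$ is finitary (the permutation coarse space is $\E_{G_{X,K}}$, generated by a group action, hence finitary by Theorem~\ref{t:Prot}) with $|X|=|\IN|=\w$, we conclude $K\cong c\IN\setminus\IN\cong\hbar_\IR(X,\E_{X,c\IN})$ is the Higson corona of a finitary coarse space on a countable set.

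The step I expect to be the main obstacle is the careful matching of remainders under the equivalence supplied by Theorem~\ref{t:main2}: that theorem literally delivers only $\IR$-\emph{equivalence} of $c\IN$ and $h_\IR(X,\E)$, and one must invoke the Proposition in Section~\ref{s:P} to convert $\IR$-equivalence into a homeomorphism that is the identity on $X$, which is exactly what guarantees it restricts to a homeomorphism of the coronas $c\IN\setminus\IN$ and $h_\IR(X,\E)\setminus X$. A secondary point requiring care is confirming that the bounded sets of the finitary permutation coarse space are precisely the finite sets, so that the $\IR$-corona $\hbar_\IR$ really equals the set-theoretic remainder $h_\IR\setminus X$ rather than a proper subset; this follows from the observation in Section~\ref{s:def} that the bornology of any finitary coarse space is $[X]^{<\w}$, together with the remark in Section~\ref{s:H} that when all bounded sets are finite the $M$-corona coincides with the remainder.
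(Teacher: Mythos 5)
Your proof is correct and takes essentially the same route as the paper: the paper deduces this corollary directly from Definition~\ref{d:p} and Theorem~\ref{t:main2}, and your argument is exactly the careful unwinding of that citation. The supporting details you supply --- Lemma~\ref{l:soft} for the ``only if'' direction, the Proposition of Section~\ref{s:P} to upgrade $\IR$-equivalence to a homeomorphism fixing the isolated points (hence mapping remainder onto remainder), and the observation that for a finitary coarse space the $\IR$-corona coincides with the set-theoretic remainder --- are precisely the ingredients the paper's one-line proof implicitly relies on.
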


\begin{corollary}\label{c:corP} A zero-dimensional compact Hausdorff space $K$ is homeomorphic to the binary corona of a finitary coarse space $(X,\E)$ with $|X|=\w$ if and only if $K$ is a $\mathsf 2$-soft corona.
\end{corollary}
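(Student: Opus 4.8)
The plan is to mimic the (unstated) proof of the Higson-corona characterization in the previous corollary, replacing $\IR$ by $\mathsf 2$ and part (3) of the Proposition in Section~\ref{s:P} by its part (4); the only genuinely new point is a zero-dimensionality bookkeeping step. For the ``only if'' direction, suppose $K$ is homeomorphic to the binary corona $\hbar_{\mathsf 2}(X,\E)$ of a finitary coarse space $(X,\E)$ with $|X|=\w$. Since $\E$ is finitary, all bounded sets are finite, so $\hbar_{\mathsf 2}(X,\E)=h_{\mathsf 2}(X,\E)\setminus X$ is precisely the remainder of the $\mathsf 2$-compactification $c\IN:=h_{\mathsf 2}(X,\E)$ of the countable discrete space $X\cong\IN$. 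By Lemma~\ref{l:soft} this compactification is $\mathsf 2$-soft, and hence $K\cong c\IN\setminus X$ exhibits $K$ as a $\mathsf 2$-soft corona (and $K$ is automatically zero-dimensional, binary coronas being so).

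For the ``if'' direction, let $K$ be a zero-dimensional $\mathsf 2$-soft corona, witnessed by a $\mathsf 2$-soft compactification $c\IN$ of $\IN$ with $c\IN\setminus\IN\cong K$. I would form the permutation coarse space $(X,\E_{X,c\IN})$ on $X:=\IN$; it is finitary by construction and has $|X|=\w$. By Theorem~\ref{t:main2} applied with $M=\mathsf 2$, the $\mathsf 2$-softness of $c\IN$ gives that $c\IN$ is $\mathsf 2$-equivalent to $L:=h_{\mathsf 2}(X,\E_{X,c\IN})$, whose remainder is the binary corona $\hbar_{\mathsf 2}(X,\E_{X,c\IN})$. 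The compactification $L$ is zero-dimensional because every binary compactification is. I would then invoke part (4) of the Proposition in Section~\ref{s:P}: once both $c\IN$ and $L$ are known to be zero-dimensional, their $\mathsf 2$-equivalence upgrades to equivalence, i.e.\ there is a homeomorphism $c\IN\to L$ fixing $X$ pointwise; such a homeomorphism carries the remainder onto the remainder, so $K\cong c\IN\setminus\IN\cong L\setminus X=\hbar_{\mathsf 2}(X,\E_{X,c\IN})$, which is the binary corona of a finitary coarse space on a countable set, as required.

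The main obstacle is that part (4) of that Proposition requires $c\IN$ itself---not merely its remainder $K$---to be zero-dimensional, whereas the hypothesis only supplies zero-dimensionality of $K$. I would close this gap with a separate lemma: any Hausdorff compactification $c\IN$ of $\IN$ whose remainder is zero-dimensional is itself zero-dimensional. The key observation is that in any Hausdorff compactification of $\IN$ the set of isolated points is exactly $\IN$ (every $n\in\IN$ has a neighbourhood meeting $\IN$ only in $\{n\}$, whose trace on the dense set $\IN$ forces it to equal $\{n\}$, while no point of the remainder can be isolated), so each singleton $\{n\}$ is clopen in $c\IN$. Consequently any connected subset of $c\IN$ with more than one point must avoid every such clopen singleton, hence lie entirely in the remainder $c\IN\setminus\IN$; as the latter is totally disconnected, that subset is a singleton. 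Thus $c\IN$ is totally disconnected, and since a totally disconnected compact Hausdorff space is zero-dimensional, $c\IN$ is zero-dimensional. With this lemma in hand the ``if'' direction goes through exactly as outlined, completing the equivalence.
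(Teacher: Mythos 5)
Your proposal is correct and takes essentially the same route the paper intends: the corollary is stated there as an immediate consequence of Definition~\ref{d:p}, Theorem~\ref{t:main2} (with $M=\mathsf 2$), Lemma~\ref{l:soft}, and part (4) of the Proposition in Section~\ref{s:P}, which are exactly the ingredients you assemble. Your auxiliary lemma --- that a Hausdorff compactification of $\IN$ whose remainder is zero-dimensional is itself zero-dimensional (clopen singletons force every nondegenerate connected set into the totally disconnected remainder, and totally disconnected compact Hausdorff spaces are zero-dimensional) --- correctly supplies a detail the paper leaves unstated but genuinely needs in order to upgrade $\mathsf 2$-equivalence to equivalence of compactifications.
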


These corollaries rise the problem of recognizing $\IR$-soft coronas among coronas. 
We recall that a topological space $X$ is {\em perfectly normal} if it is normal and each closed subset of $X$ is of type $G_\delta$. By \cite{Przym}, perfectly normal compact spaces are coronas.

\begin{theorem}\label{t:Par} A compact space $K$ is a soft corona if one of the following conditions is satisfied:
\begin{enumerate}
\item $K$ is a corona of character $\chi(K)<\mathfrak p$;
\item $K$ is perfectly normal;
\item $K$ has weight $w(K)\le\w_1<\mathfrak p$;
\item $K$ has weight $w(K)\le\w_1=\mathfrak c$.
\end{enumerate}
\end{theorem}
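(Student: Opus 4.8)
The plan is to treat the four conditions non-uniformly: (4) will be quoted, (3) handled by an explicit embedding, and (2) and (3) both reduced to (1), which carries the real weight. By Definition~\ref{d:p} it suffices, in each case, to produce a single \emph{soft} compactification $c\IN$ of a countable discrete set $\IN$ whose remainder $c\IN\setminus\IN$ is homeomorphic to $K$; softness will always be verified through Proposition~\ref{p:ss}, i.e. by exhibiting, for every pair of disjoint $A,B\subseteq\IN$ whose closures in $c\IN$ meet, sequences $(a_n)\subseteq A$ and $(b_n)\subseteq B$ converging to a common point of $K$. The engine for producing such sequences is Proposition~\ref{p:p2}: if the common limit point $z$ has character $\chi(z;c\IN)<\mathfrak p$ in the compactification, then the countable sets $A$ and $B$ automatically contain sequences converging to $z$. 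Thus the whole problem reduces to realising $K$ as the remainder of a compactification of character $<\mathfrak p$.

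For (4) there is nothing to do beyond citing Hart \cite{Hart}: under $\w_1=\mathfrak c$ (that is, CH) every compact space of weight $\le\w_1$ is already a soft corona. For (3) I would give the construction by hand. Since $w(K)\le\w_1$, the space $K$ is a corona by \cite{Par}, so there is a continuous surjection $f=(f_\alpha)_{\alpha<\kappa}\colon\IN^*\to K\subseteq[0,1]^\kappa$ with $\IN^*:=\beta\IN\setminus\IN$ and $\kappa=w(K)\le\w_1$. Extending each $f_\alpha$ over $\beta\IN$ by Tietze and appending a coordinate $n\mapsto 2^{-n}$ (whose extension sends $\IN^*$ to $0$) to separate the isolated points, one obtains a compactification $c\IN\subseteq[0,1]^\kappa\times[0,1]$ with remainder $K\times\{0\}\cong K$ and weight, hence character, at most $\kappa\le\w_1<\mathfrak p$. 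As $\IN$ is dense, $c\IN$ is separable, so Corollary~\ref{c:p1} (or directly Propositions~\ref{p:p2} and~\ref{p:ss}) shows $c\IN$ soft, and $K$ is a soft corona.

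Conditions (2) and (1) are then unified: a perfectly normal compactum has every point a $G_\delta$, hence, the space being compact, is first countable and $\chi(K)=\w<\mathfrak p$; moreover it is a corona by \cite{Przym}. So (2) is the instance $\chi(K)=\w$ of (1), and (3) is likewise subsumed once the weight bound is relaxed to a character bound. It therefore remains to prove (1): \textbf{a corona $K$ with $\chi(K)<\mathfrak p$ is the remainder of a compactification of character $<\mathfrak p$.} The naive embedding of the previous paragraph only yields character $\le w(K)$, which may exceed $\mathfrak p$ when $K$ has small character but large weight; overcoming this gap is the main obstacle. My plan here is to build $c\IN$ by a transfinite recursion rather than a single embedding. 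Fixing for every $z\in K$ a local base $\{V^z_\xi:\xi<\kappa\}$ in $K$ with $\kappa=\chi(K)<\mathfrak p$, I would enumerate the potential failures of softness and, step by step, assign to countably many integers a convergence behaviour compatible with these local bases, using the defining property of $\mathfrak p$ (a family of fewer than $\mathfrak p$ subsets of $\IN$ with the strong finite intersection property admits an infinite pseudo-intersection) to thread a single convergent sequence through the $\kappa$-many neighbourhood traces at each relevant point. Carrying this out so that the resulting topology is compact Hausdorff, has $\IN$ as its set of isolated points, has remainder exactly $K$, and assigns to each $z\in K$ a neighbourhood base of size $<\mathfrak p$ in $c\IN$ is the delicate part; once it is done, Proposition~\ref{p:p2} supplies the convergent sequences and Proposition~\ref{p:ss} delivers softness, completing (1) and with it (2) and the whole theorem.
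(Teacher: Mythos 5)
Your cases (2), (3) and (4) are fine: (4) is quoted from Hart exactly as in the paper, (2) reduces to (1) exactly as in the paper (perfect normality gives first countability and, by Przymusi\'nski, the corona property), and your explicit construction for (3) (pushing the Parovi\v cenko surjection $\IN^*\to K\subseteq[0,1]^\kappa$ forward through a Tietze extension with an extra separating coordinate $n\mapsto 2^{-n}$) is a correct, if different, route to a separable compactification of character $\le\w_1<\mathfrak p$ with remainder $K$. But case (1), which you yourself identify as carrying ``the real weight,'' is not proved: what you offer is a plan for a transfinite recursion whose crucial step --- producing a compact Hausdorff topology on $\IN\cup K$ with remainder exactly $K$ and with character $<\mathfrak p$ at every point --- is explicitly left as ``the delicate part.'' As it stands, this is a genuine gap, not a proof.

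The idea you are missing is that no construction is needed at all: \emph{any} compactification $c\IN$ witnessing that $K$ is a corona already has character $<\mathfrak p$. Indeed, fix $z\in K=c\IN\setminus\IN$ and open sets $\{V_\xi\}_{\xi<\kappa}$ in $c\IN$, $\kappa\le\chi(z;K)<\mathfrak p$, with $\bigcap_{\xi<\kappa}(V_\xi\cap K)=\{z\}$. Adding the countably many open sets $c\IN\setminus\{n\}$, $n\in\IN$, we get a family of size $\kappa+\w<\mathfrak p$ of open subsets of $c\IN$ whose intersection is $\{z\}$; thus the pseudocharacter of $c\IN$ at $z$ is $<\mathfrak p$, and in a compact Hausdorff space pseudocharacter equals character, so $\chi(z;c\IN)<\mathfrak p$. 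Since the points of $\IN$ are isolated, $\chi(c\IN)<\mathfrak p$, and $c\IN$ is separable because $\IN$ is dense in it. Corollary~\ref{c:p1} (via Propositions~\ref{p:p2} and~\ref{p:ss}, exactly the engine you intended to use) now shows that $c\IN$ is soft, hence $K$ is a soft corona. This is the paper's three-line proof of (1); the ``main obstacle'' you describe --- that character of the remainder does not control the weight of a compactification --- is real, but irrelevant, because character of the remainder \emph{does} control the character of the compactification, and character is all that softness requires.
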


\begin{proof} 1. Assume that $K$ is a corona with $\chi(K)<\mathfrak p$. Then  $K$ can be identified with the remainder $c\IN\setminus\IN$ of some compactification $c\IN$ of $\IN$. Taking into account that $\chi(K)<\mathfrak p$ and  $c\IN\setminus K=\IN$ is countable, we conclude that $c\IN$ has (pseudo)character $<\mathfrak p$. By Corollary~\ref{c:p1}, the compactification $c\IN$ is soft. Consequently, $K$ is a soft corona.
\smallskip

2. If $K$ is perfectly normal, then $K$ is a corona according to \cite{Przym}. Being perfectly normal, the space $K$ is first-countable. By the first statement, the space $K$ is a soft corona.
\smallskip

3. If $K$ has weight $w(K)\le\w_1<\mathfrak p$, then $K$ is a corona by \cite{Par} or \cite[1.3.4]{vM} and has character $\chi(K)\le w(K)<\mathfrak p$. Applying the first statement, we conclude that $K$ is a soft corona.
\smallskip

4. If $w(K)\le\w_1=\mathfrak c$, then $K$ is a soft corona by a result of Hart  \cite{Hart}.
\end{proof} 

Corollary~\ref{c:corP} and Theorems~\ref{t:Par}, \ref{t:mainCH} imply the following corollaries.

\begin{corollary}\label{c:Higson} (Under $\as=\mathfrak c$) a compact space $K$ is homeomorphic to the Higson corona of a (cellular) finitary coarse space $(X,\E)$ with $|X|=\w$ if one of the following conditions is satisfied:
\begin{enumerate}
\item $K$ is a corona of character $\chi(K)<\mathfrak p$;
\item $K$ is perfectly normal;
\item $w(K)\le\w_1<\mathfrak p$;
\item $w(K)\le\w_1=\mathfrak c$.
\end{enumerate}
\end{corollary}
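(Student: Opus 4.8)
The plan is to assemble the statement from three ingredients already established: Theorem~\ref{t:Par} (which produces a soft witnessing compactification of $\IN$), the implication soft $\Ra$ $\IR$-soft from Proposition~\ref{p:sss}(1), and Corollary~\ref{c:corP} together with Theorem~\ref{t:mainCH} (which convert an $\IR$-soft corona into the Higson corona of a finitary, resp.\ cellular finitary, coarse space). No new geometric or set-theoretic work is required; the proof is a bookkeeping argument that threads the four hypotheses through these results while keeping track of the remainder.

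First I would observe that each of the four conditions is precisely one of the hypotheses of Theorem~\ref{t:Par}, so in every case $K$ is a \emph{soft corona}. By Definition~\ref{d:p} this means that $K$ is homeomorphic to the remainder $c\IN\setminus\IN$ of a soft compactification $c\IN$ of the countable discrete space $\IN$. Applying Proposition~\ref{p:sss}(1), the soft compactification $c\IN$ is $\IR$-soft, so $K$ is an $\IR$-soft corona. At this point Corollary~\ref{c:corP} (its Higson-corona form, characterizing Higson coronas of finitary coarse spaces with countable $X$ as exactly the $\IR$-soft coronas) applies verbatim and yields a finitary coarse space $(X,\E)$ with $|X|=\w$ whose Higson corona is homeomorphic to $K$. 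This already establishes the ZFC statement, without the word ``cellular''.

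For the cellular refinement I would assume $\as=\mathfrak c$ and feed the compactification $c\IN$ into Theorem~\ref{t:mainCH} with $M=\IR$. The space $c\IN$ is separable, since its dense set of isolated points is the countable set $\IN$, and it is $\IR$-soft by the previous paragraph; hence Theorem~\ref{t:mainCH} supplies a cellular finitary coarse structure $\E$ on $\IN$ for which $c\IN$ is $\IR$-equivalent to the Higson compactification $h_\IR(\IN,\E)$. Because $\IR$-equivalent compactifications of the same underlying set are equivalent (the equivalence $\Leftrightarrow$ $\IR$-equivalence clause of the Proposition on $M$-equivalence in Section~\ref{s:P}), there is a homeomorphism $h\colon c\IN\to h_\IR(\IN,\E)$ fixing every point of $\IN$. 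Such an $h$ carries the remainder $c\IN\setminus\IN$ homeomorphically onto $h_\IR(\IN,\E)\setminus\IN$, and the latter coincides with the Higson corona $\hbar_\IR(\IN,\E)$ because all bounded sets of a finitary coarse space are finite. Composing with the homeomorphism $K\cong c\IN\setminus\IN$ furnished by softness yields $K\cong\hbar_\IR(\IN,\E)$, exhibiting $K$ as the Higson corona of the cellular finitary coarse space $(\IN,\E)$ with $|\IN|=\w$.

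The one place demanding genuine care—and hence the step I would treat as the main obstacle—is the last transfer: checking that the homeomorphism extracted from $\IR$-equivalence respects the splitting of the space into isolated points and remainder, and that the remainder on the coarse-space side is \emph{exactly} the Higson corona rather than a larger set. Both points rest on the finitariness of $\E$ (so that bounded $=$ finite and the corona is literally $h_\IR(\IN,\E)\setminus\IN$) and on the fact that an equivalence fixes $\IN$ pointwise, hence sends non-isolated points to non-isolated points. Once these are verified, the four implications and the cellular upgrade under $\as=\mathfrak c$ follow mechanically.
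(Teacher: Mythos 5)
Your proposal is correct and follows exactly the paper's own route: the paper proves this corollary by citing Theorem~\ref{t:Par} (each condition gives a soft, hence $\IR$-soft, corona), Corollary~\ref{c:corP} (for the non-cellular ZFC statement), and Theorem~\ref{t:mainCH} (for the cellular upgrade under $\as=\mathfrak c$), which is precisely your assembly. Your extra care in the last step—using the equivalence $\Leftrightarrow$ $\IR$-equivalence proposition to get a homeomorphism fixing $\IN$ pointwise, and noting that finitariness makes the corona equal to the remainder $h_\IR(\IN,\E)\setminus\IN$—is exactly the bookkeeping the paper leaves implicit.
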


\begin{corollary}\label{c:Higson0} (Under $\as=\mathfrak c$) a zero-dimensional compact space $K$ is homeomorphic to the binary corona of a (cellular) finitary coarse space $(X,\E)$ with $|X|=\w$ if one of the following conditions is satisfied:
\begin{enumerate}
\item $K$ is a corona of character $\chi(K)<\mathfrak p$;
\item $K$ is perfectly normal;
\item $w(K)\le\w_1<\mathfrak p$;
\item $w(K)\le\w_1=\mathfrak c$.
\end{enumerate}
\end{corollary}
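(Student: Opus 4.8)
The plan is to reduce the statement to the three results quoted immediately before it, threading through them the chain of softness implications of Proposition~\ref{p:sss}. First I would note that each of the four conditions is precisely a hypothesis of Theorem~\ref{t:Par}, so in every case $K$ is a \emph{soft} corona; fix a soft compactification $c\IN$ of the countable discrete space $\IN$ whose remainder $c\IN\setminus\IN$ is homeomorphic to $K$. Applying Proposition~\ref{p:sss} twice (soft $\Rightarrow$ $\IR$-soft $\Rightarrow$ $\mathsf 2$-soft), the compactification $c\IN$ is $\mathsf 2$-soft, so $K$ is a $\mathsf 2$-soft corona. Because $K$ is zero-dimensional, Corollary~\ref{c:corP} then gives at once that $K$ is homeomorphic to the binary corona of a finitary coarse space $(X,\E)$ with $|X|=\w$; this disposes of the non-parenthetical (``finitary'') assertion without any appeal to $\as=\mathfrak c$.

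For the cellular refinement I would invoke Corollary~\ref{c:CH0}, the $M=\mathsf 2$ specialization of Theorem~\ref{t:mainCH}, which under $\as=\mathfrak c$ turns a zero-dimensional separable $\mathsf 2$-soft compactification into the binary compactification of a cellular finitary coarse space. Two hypotheses of that corollary must be checked for our $c\IN$: separability, which is immediate as $\IN$ is a countable dense set of isolated points; and the zero-dimensionality of $c\IN$ itself, the one step that is not a plain citation. I would deduce it from zero-dimensionality of the remainder: each $n\in\IN$ is isolated, so $\{n\}$ is clopen in $c\IN$, whence a connected subset meeting $\IN$ is a singleton, while a connected subset disjoint from $\IN$ lies in $K=c\IN\setminus\IN$, which is totally disconnected. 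Thus every connected component of $c\IN$ is a singleton, so $c\IN$ is totally disconnected and, being compact Hausdorff, zero-dimensional.

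Recognizing $c\IN$ as a zero-dimensional separable $\mathsf 2$-soft compactification, Corollary~\ref{c:CH0} supplies, under $\as=\mathfrak c$, a cellular finitary coarse structure $\E$ on $\IN$ and a homeomorphism $c\IN\to h_{\mathsf 2}(\IN,\E)$. Such a homeomorphism carries isolated points to isolated points, hence maps the remainder $K$ onto $h_{\mathsf 2}(\IN,\E)\setminus\IN=\hbar_{\mathsf 2}(\IN,\E)$, identifying $K$ with the binary corona of the cellular finitary coarse space $(\IN,\E)$, where $|\IN|=\w$. The sole genuine obstacle in this scheme is the transfer of zero-dimensionality from $K$ to the witnessing compactification $c\IN$ carried out above; the rest is the routine chaining of the cited theorems.
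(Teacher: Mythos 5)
Your proof is correct and follows essentially the same route as the paper's: Theorem~\ref{t:Par} to get a soft corona, Proposition~\ref{p:sss} to pass to $\mathsf 2$-softness, Corollary~\ref{c:corP} for the finitary claim, and Theorem~\ref{t:mainCH} (via Corollary~\ref{c:CH0}) for the cellular refinement under $\as=\mathfrak c$. Your explicit check that zero-dimensionality of the remainder $K$ forces the witnessing compactification $c\IN$ to be zero-dimensional (components meeting $\IN$ are singletons by isolatedness, components in the remainder are singletons by total disconnectedness, and totally disconnected compact Hausdorff spaces are zero-dimensional) fills in a step the paper leaves implicit but does not change the argument.
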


\begin{remark} Corollary~\ref{c:Higson} can be considered as a generalization of the result of Mine and Yamashita \cite{MY} who proved that each compact metric space is homeomorphic to the Higson corona of a suitable coarse space (namely, a totally bounded locally compact metric space carrying the $C_0$ coarse structure of Wright \cite{Wri}).
\end{remark}

With the help of Theorems~\ref{t:mainCH} and \ref{t:Par}(4), we can improve Corollary~\ref{c:Higson}(4) as follows.

\begin{corollary}\label{c:CH-H} Under CH, each compact space of weight $\le\w_1$ is homeomorphic to the Higson corona of some cellular finitary coarse space $(X,\E)$ with $|X|=\w$.
\end{corollary}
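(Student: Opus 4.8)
The plan is to synthesize the two results flagged just before the statement: Theorem~\ref{t:Par}(4), which presents $K$ as a soft corona, and Theorem~\ref{t:mainCH}, which realizes a separable $\IR$-soft compactification as the Higson compactification of a \emph{cellular} finitary coarse space. The only role of CH is to supply two cardinal equalities. On one hand CH gives $\w_1=\mathfrak c$; on the other hand, from the bounds $\max\{\mathfrak b,\mathfrak s,\mathrm{cov}(\M)\}\le\as\le\mathfrak c$ recalled in Section~\ref{s:soft}, CH collapses everything to $\mathfrak c$ and hence yields $\as=\mathfrak c$. Both hypotheses needed below are thereby in force.

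First I would fix a compact Hausdorff space $K$ with $w(K)\le\w_1$. Since $\w_1=\mathfrak c$, Theorem~\ref{t:Par}(4) applies and furnishes a \emph{soft} compactification $c\IN$ of the discrete space $\IN$ whose remainder $c\IN\setminus\IN$ is homeomorphic to $K$. By Proposition~\ref{p:sss}(1), softness entails $\IR$-softness, so $c\IN$ is $\IR$-soft. Moreover $c\IN$ is separable, $\IN$ being a countable dense subset; and in a separable space every isolated point lies in each dense set, so the set of isolated points of $c\IN$ is contained in $\IN$. Together with the obvious inclusion $\IN\subseteq(c\IN)'$ this gives $(c\IN)'=\IN$. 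Thus $K\cong c\IN\setminus\IN$ is exactly the remainder $c\IN\setminus(c\IN)'$.

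Now I would invoke the $M=\IR$ instance of Theorem~\ref{t:mainCH}, legitimate because $\as=\mathfrak c$: the separable $\IR$-soft compactification $c\IN$ is $\IR$-equivalent to the Higson compactification $h_\IR(\IN,\E)$ of $\IN$ equipped with a suitable \emph{cellular} finitary coarse structure $\E$. By part~(3) of the first Proposition of Section~\ref{s:P}, $\IR$-equivalence coincides with equivalence, so there is a homeomorphism $c\IN\to h_\IR(\IN,\E)$ restricting to the identity on $\IN$; it therefore maps the remainder $c\IN\setminus\IN$ homeomorphically onto $h_\IR(\IN,\E)\setminus\IN$. Finally, since $(\IN,\E)$ is finitary its bounded sets are precisely the finite sets, whence the Higson corona $\hbar_\IR(\IN,\E)$ equals the remainder $h_\IR(\IN,\E)\setminus\IN$. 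Composing the homeomorphisms yields $K\cong c\IN\setminus\IN\cong\hbar_\IR(\IN,\E)$, so $K$ is the Higson corona of the cellular finitary coarse space $(\IN,\E)$ with $|\IN|=\w$, as required.

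Essentially all the mathematical depth is imported through Theorems~\ref{t:Par}(4) and \ref{t:mainCH}, so within this derivation the only thing demanding care is the bookkeeping of identifications. I expect the one genuinely load-bearing observation to be that $(c\IN)'=\IN$, which guarantees that the topological corona $c\IN\setminus\IN$ coincides with the Higson corona $h_\IR(\IN,\E)\setminus\IN$ rather than with the remainder over some larger set of isolated points; this is exactly where separability of $c\IN$ is used. The companion point is that one must extract a genuine equivalence (a homeomorphism fixing $\IN$ pointwise) from Theorem~\ref{t:mainCH}, not merely an abstract homeomorphism, in order to transport remainders; this is supplied by the coincidence of $\IR$-equivalence with equivalence. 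An equivalent and slightly shorter route records first that $K$ is an $\IR$-soft corona and then appeals to the corona characterization established at the start of this section, quoting Theorem~\ref{t:mainCH} solely for the upgrade from finitary to cellular.
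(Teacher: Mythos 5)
Your proposal is correct and follows exactly the route the paper intends: Theorem~\ref{t:Par}(4) (Hart's result, via $\w_1=\mathfrak c$) produces a soft compactification $c\IN$ with remainder $K$, and Theorem~\ref{t:mainCH} (applicable since CH forces $\as=\mathfrak c$ through $\mathfrak b\le\as\le\mathfrak c$) upgrades it to the Higson compactification of a cellular finitary coarse structure on $\IN$, whose corona is the remainder. Your careful bookkeeping --- $(c\IN)'=\IN$, passing from $\IR$-equivalence to genuine equivalence via the Proposition of Section~\ref{s:P}(3), and identifying the Higson corona with the remainder since bounded sets are finite --- is exactly what the paper leaves implicit (your citation of $\mathrm{cov}(\M)$ in place of $\mathrm{cov}(\mathcal N)$ in the lower bound for $\as$ is a harmless slip, as $\mathfrak b=\mathfrak c$ under CH already suffices).
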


\begin{corollary}\label{c:CH-0} Under CH, each zero-dimensional compact space of weight $\le\w_1$ is homeomorphic to the binary corona of some cellular finitary coarse space $(X,\E)$ with $|X|=\w$.
\end{corollary}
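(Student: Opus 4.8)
The plan is to obtain this statement as the binary (zero\-dimensional) reading, under CH, of the circle of results culminating in Corollary~\ref{c:Higson0}(4), and I will spell out the ingredients that feed into it. First I would record the two consequences of CH that switch on the machinery: $\mathfrak c=\w_1$ and $\as=\mathfrak c$. The equality $\as=\mathfrak c$ holds because CH implies Martin's Axiom, under which $\as=\mathfrak c$; equivalently one reads off $\w_1\le\mathfrak s\le\as\le\mathfrak c=\w_1$ directly. Consequently, for every zero\-dimensional compactum $K$ with $w(K)\le\w_1$ we may simultaneously invoke the hypothesis $\as=\mathfrak c$ of Theorem~\ref{t:mainCH} and the hypothesis $w(K)\le\w_1=\mathfrak c$ of Theorem~\ref{t:Par}(4).

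Next I would manufacture a \emph{zero-dimensional} $\mathsf 2$-soft compactification with remainder $K$. Since $w(K)\le\w_1=\mathfrak c$, Theorem~\ref{t:Par}(4) (Hart's theorem) shows that $K$ is a soft corona, and by Proposition~\ref{p:sss} softness implies $\mathsf 2$-softness, so $K$ is a $\mathsf 2$-soft corona. Because $K$ is moreover zero-dimensional, the characterization of binary coronas (Corollary~\ref{c:corP}) applies and realizes $K$ as the binary corona $\hbar_{\mathsf 2}(\IN,\E)$ of a finitary coarse space $(\IN,\E)$ with $|\IN|=\w$. The decisive point of this step is that it delivers $K\cong h_{\mathsf 2}(\IN,\E)\setminus\IN$, where the binary compactification $h_{\mathsf 2}(\IN,\E)$ is itself zero-dimensional, separable, and (by Lemma~\ref{l:soft}) $\mathsf 2$-soft. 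Thus I have replaced Hart's possibly non\-zero\-dimensional compactification by a genuinely zero\-dimensional one having the same remainder.

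Then I would upgrade the coarse structure to a cellular one and push the equivalence down to the remainders. Applying Theorem~\ref{t:mainCH} with the proper metric space $M=\mathsf 2$ (legitimate since $\as=\mathfrak c$) to the separable $\mathsf 2$-soft compactification $h_{\mathsf 2}(\IN,\E)$, I obtain a \emph{cellular} finitary coarse structure $\E_H$ on $\IN$ for which $h_{\mathsf 2}(\IN,\E)$ and $h_{\mathsf 2}(\IN,\E_H)$ are $\mathsf 2$-equivalent. Both of these compactifications are zero-dimensional, so by the fact that $\mathsf 2$-equivalent zero-dimensional compactifications of the same set are equivalent (the Proposition in Section~\ref{s:P}, part (4)) there is a homeomorphism $h_{\mathsf 2}(\IN,\E)\to h_{\mathsf 2}(\IN,\E_H)$ fixing $\IN$ pointwise. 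Restricting it to the remainders, and using that in a finitary space all bounded sets are finite (so each binary corona is exactly the remainder of the binary compactification), I conclude
$$K\cong h_{\mathsf 2}(\IN,\E)\setminus\IN\cong h_{\mathsf 2}(\IN,\E_H)\setminus\IN=\hbar_{\mathsf 2}(\IN,\E_H),$$
which exhibits $K$ as the binary corona of the cellular finitary coarse space $(\IN,\E_H)$ with $|\IN|=\w$.

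The step I expect to be the main obstacle is the final transfer from a $\mathsf 2$-equivalence of compactifications to a homeomorphism of their coronas: $\mathsf 2$-equivalence only compares $\mathsf 2$-valued continuous (clopen) data and, for general compactifications, does not imply a homeomorphism of the total spaces. This is precisely where zero-dimensionality is indispensable, and the reason I route the argument through $h_{\mathsf 2}(\IN,\E)$ in the previous step is exactly to make \emph{both} compactifications zero-dimensional, so that clopen information pins down the topology and $\mathsf 2$-equivalence becomes genuine equivalence that descends to the remainders. In essence the whole argument is already packaged in Corollary~\ref{c:Higson0}(4), and the present statement is nothing but its CH reading, where condition (4) collapses to ``$w(K)\le\w_1$''; the proof is therefore the verbatim zero-dimensional analogue of Corollary~\ref{c:CH-H} with $\IR$ replaced by $\mathsf 2$ throughout.
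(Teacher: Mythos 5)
Your proof is correct and takes essentially the same route as the paper, which derives this corollary by exactly the chain you assemble: Theorem~\ref{t:Par}(4) (Hart), Proposition~\ref{p:sss}, Corollary~\ref{c:corP}, Theorem~\ref{t:mainCH} with $M=\mathsf 2$, and the Section~\ref{s:P} Proposition upgrading $\mathsf 2$-equivalence of zero-dimensional compactifications to equivalence. Your detour through the binary compactification to secure zero-dimensionality is sound but not strictly necessary: since $K$ is zero-dimensional and $\IN$ is discrete, any compactification $c\IN$ with remainder $K$ is totally disconnected and hence (being compact Hausdorff) zero-dimensional, so Theorem~\ref{t:mainCH} could be applied to Hart's compactification directly.
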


\begin{remark} Corollary~\ref{c:CH-H} should be compared with Theorem 4 \cite{Prot_bc} (see also Theorem~11 in \cite{DKU}) saying that the Higson corona of any ordinal cellular coarse space is zero-dimensional.
\end{remark}

\begin{remark} We do not know whether the Continuum Hypothesis can be removed from Corollaries~\ref{c:CH-H} and \ref{c:CH-0}. 
\end{remark}

By \cite{Hart}, under CH, each compact space of weight $\le\w_1$ is a soft corona and hence an $\IR$-soft corona. We do not know if the latter fact remains true in ZFC.

\begin{problem} Is each compact Hausdorff space of weight $\le\w_1$ an $\IR$-soft corona? A $\mathsf 2$-soft corona?
\end{problem}

Another problem is motivated by Corollary~\ref{c:fu}.

\begin{problem} Is each (Fr\'echet-Urysohn) corona $\IR$-soft?
\end{problem}

By \cite{Hart}, the Dow Principle (NT) implies that the linearly ordered compact space $\w_1+1+\w_1^*$ is not a soft corona. Here $\w_1^*$ stands for the space $\w_1$ with the reverse linear order. On the other hand, the space $\w_1+1+\w_1^*$ has the following ZFC-property.

\begin{proposition}\label{p:sPar} The compact space $\w_1+1+\w_1^*$ is an $\IR$-soft corona.
\end{proposition}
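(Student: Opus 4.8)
The plan is to realize $K=\w_1+1+\w_1^*$ as the remainder of an explicit $\IR$-soft compactification of $\IN$. By Definition~\ref{d:p} it suffices to produce a compactification $c\IN$ of the countable discrete space with $c\IN\setminus\IN$ homeomorphic to $K$ which is $\IR$-soft; the conceptual backbone is Corollary~\ref{c:corP}, which lets us instead realize $K$ as the Higson corona of a finitary coarse space on a countable set, in which case $\IR$-softness comes for free from Lemma~\ref{l:soft}. Concretely I would start from a compactification $L$ of a countable discrete set whose remainder is the compact ordinal $[0,\w_1]$; such an $L$ exists in ZFC since $w([0,\w_1])\le\w_1$ (see \cite{Par} or \cite[1.3.4]{vM}). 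I would then take a mirror copy $L^*$ with remainder $[0,\w_1]^*$ and glue $L$ and $L^*$ along their common top point $*$, obtaining $c\IN$ with isolated set $\IN=N_L\sqcup N_R$ dense in it and remainder $([0,\w_1)\cup\{*\}\cup\w_1^*)\cong\w_1+1+\w_1^*$. The gluing is arranged so that there is a mirror homeomorphism $\mu$ of $c\IN$ interchanging the two halves with $\mu(*)=*$, and so that $*$ is a sequential limit of isolated points from each half.

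Next I would verify $\IR$-softness at the ``easy'' points. Every point of the remainder other than $*$ is first countable in $c\IN$ (limit ordinals $<\w_1$ have cofinality $\w$, and the construction can be made first countable at the points that are isolated inside the remainder). So if a bounded $\varphi:\IN\to\IR$ has $\osc_z(\varphi)>0$ at some such $z$, I would run the argument from the proof of Proposition~\ref{p:sss}: from $I:=\bigcap_{U\ni z}\overline{\varphi(U\cap\IN)}$ pick $a<b\in I$ and form the value-separated sets $A=\{x:\varphi(x)<\tfrac23a+\tfrac13b\}$ and $B=\{x:\varphi(x)>\tfrac13a+\tfrac23b\}$, both having $z$ in their closures. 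First countability gives sequences $a_n\in A\to z$ and $b_n\in B\to z$, and Proposition~\ref{p:ss} turns the transposition product $\prod_n(a_n\,b_n)$ into a homeomorphism $g\in\Homeo(c\IN,\IN)$ with $|\varphi(g(a_n))-\varphi(a_n)|\ge\tfrac13(b-a)$ for all $n$, witnessing $\IR$-softness at $z$.

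The heart of the proof is the point $*$, where $c\IN$ has character $\w_1$ and is not first countable, so Proposition~\ref{p:ss} is not directly available. Reducing again to value-separated sets $A,B$ clustering at $*$, I would distinguish whether $A,B$ admit a common sequentially accessible cluster point (handled as above) or the oscillation is genuinely concentrated at $*$. In the latter, decisive case I would exploit the mirror symmetry: choosing a sequence $s_n\in N_L$ with $s_n\to *$, its image $\mu(s_n)\in N_R$ also converges to $*$, so both sequences accumulate only at $*$ and the product of transpositions $\prod_n(s_n\,\mu(s_n))$ lies in $\Homeo(c\IN,\IN)$; one then shows, using that we only need a definite value gap (not a set-swap), that this homeomorphism moves $\varphi$ by a fixed amount along infinitely many $s_n$.

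The hard part will be exactly this last step at $*$. Since $\w_1+1+\w_1^*$ is consistently not a soft corona (Hart's result under Dow's principle, quoted before the statement), $c\IN$ cannot be made Fr\'echet--Urysohn at $*$, and hence oscillation at $*$ need not be captured by any single convergent sequence coming from a prescribed set; this is precisely why full softness must fail while $\IR$-softness can survive. The delicate point is to guarantee, for an arbitrary real-valued $\varphi$ oscillating at $*$ (including the mirror-symmetric case), a sequence converging to $*$ along which $\varphi$ genuinely jumps, which is what forces the two-sided symmetric construction with $*$ sequentially accessible from each ray yet non-Fr\'echet in the whole space. I expect everything except this $*$-analysis to be routine, and the symmetric two-ray construction together with the real-valued reduction to be the mechanism that makes it go through in ZFC.
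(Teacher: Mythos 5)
Your reduction of the problem to constructing an $\IR$-soft compactification $c\IN$ with remainder $\w_1+1+\w_1^*$ is the right goal (though note your opening appeal to Corollary~\ref{c:corP} is circular: that corollary converts between ``$\IR$-soft corona'' and ``Higson corona of a finitary coarse space,'' so it cannot be used to obtain $\IR$-softness ``for free'' --- it is exactly what must be proved). The genuine gap is the step you yourself flag as hard, and your proposed mechanism for it cannot work. At the glued point $*$, which has character $\w_1$ and is not Fr\'echet--Urysohn, oscillation of a bounded $\varphi:\IN\to\IR$ need not be visible along \emph{any} convergent sequence: a function can have $\osc_*(\varphi)>0$ while being constant on every member of any prescribed countable family of sequences converging to $*$. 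In particular, for your fixed sequences $s_n$ and $\mu(s_n)$ nothing forces $|\varphi(s_n)-\varphi(\mu(s_n))|\ge\e$ infinitely often; a mirror-symmetric $\varphi$ (i.e.\ $\varphi=\varphi\circ\mu$) is moved by your transposition product not at all, and even asymmetric functions can oscillate at $*$ entirely off the two chosen sequences. Since elements of $\Homeo(c\IN,\IN)$ fix $*$ and every other remainder point, some structural feature of the compactification itself must rule out functions that oscillate \emph{only} at $*$; a generic Parovi\v cenko-type compactification with remainder $[0,\w_1]$, glued to its mirror, provides no such feature.

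This is precisely what the paper's proof supplies, and it cannot be done by a single ZFC construction of your kind. The paper splits into cases: if $\w_1<\mathfrak p$, then $\w_1+1+\w_1^*$ is already a soft corona by Theorem~\ref{t:Par}(3); if $\w_1=\mathfrak p$, it invokes Nyikos--Vaughan (whose tight Hausdorff gaps exist exactly under this hypothesis, not in ZFC) to define the neighborhoods of the middle point $z$ via a tight gap $\langle(X_\alpha,Y_\alpha)\rangle_{\alpha\in\w_1}$ on the set $D$ of isolated points. The gap's indestructibility (condition (3)) and tightness (condition (4)) yield the key Claim~\ref{cl:cont}: every function continuous on $K\setminus\{z\}$ extends continuously through $z$. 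Hence no bounded function can oscillate only at $z$; any oscillation must occur at some point of $L\setminus\{z\}$, all of which are first countable, where the standard sequence-swapping argument of Proposition~\ref{p:ss} (which you use correctly for the easy points) finishes the proof. Without an analogue of Claim~\ref{cl:cont} your construction leaves the decisive case unhandled, so the proposal does not constitute a proof.
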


\begin{proof} If $\w_1<\mathfrak p$, then the compact space $\w_1+1+\w_1^*$ is a  soft corona by Theorem~\ref{t:Par}(3) and Proposition~\ref{p:sss}. It remains to consider the case $\w_1=\mathfrak p$.
Write the linearly ordered space $L=\w_1+1+\w_1^*$ as $$L=\{x_\alpha\}_{\alpha\in\w_1}\cup\{z\}\cup\{y_\alpha\}_{\alpha\in\w_1}$$ where $$x_\alpha<x_\beta<z<y_\beta<x_\alpha$$for any $\alpha<\beta<\w_1$. For two elements $a\le b$ in $L$, consider the order intervals 
$$
\begin{aligned}
&L[a,b]:=\{x\in L:a\le x\le b\},\;L[a,b):=\{x\in L:a\le x<b\},\\
&L(a,b]:=\{x\in L:a<x\le b\},\;L(a,b):=\{x\in L:a<x<b\}
\end{aligned}
$$ with end-points $a,b$.

Choose any countable set $D$, which is disjoint with $L$. 
According to Theorem 1.2 in \cite{NV}, the equality $\w_1=\mathfrak p$ implies the existence of a tight Hausdorff gap on $D$, which is a transfinite sequence $\langle (X_\alpha,Y_\alpha)\rangle_{\alpha\in\w_1}$ of pairs of infinite subsets of $D$ satisfying the following conditions:
\begin{enumerate}  
\item $X_\alpha\subset^* X_\beta$ and $Y_\alpha\subset^* Y_\beta$ for any countable ordinals $\alpha<\beta$;
\item $X_\alpha\cap Y_\alpha$ is finite for every $\alpha\in\w_1$;
\item for any disjoint sets $X,Y\subseteq D$ there exists $\alpha\in\w_1$ such that $X_\alpha\not\subseteq^* X$ or $Y_\alpha\not\subseteq^* Y$;
\item for any infinite set $I\subseteq D$ there exists $\alpha\in\w_1$ such that $I\cap(X_\alpha\cup Y_\alpha)$ is infinite.
\end{enumerate} 
Here given sets $A,B$, we write $A\subseteq^* B$ (and $A\subset^* B$)  if the complement $A\setminus B$ is finite (and the complement $B\setminus A$ is infinite).

Consider the space $K=L\cup D$ endowed with the topology $\tau$ consisting of the sets $U\subseteq K$ satisfying the following conditions:
\begin{itemize}
\item if $x_\alpha\in U$ for some ordinal $\alpha\in\w_1$, then there exists an ordinal $\beta<\alpha$ such that $L(x_\beta,x_\alpha]\cup (X_\alpha\setminus X_\beta)\subseteq^* U$;
\item if $y_\alpha\in U$ for  some ordinal $\alpha\in\w_1$, then there exists an ordinal $\beta<\alpha$ such that $L[y_\alpha,y_\beta)\cup (Y_\alpha\setminus Y_\beta)\subseteq^* U$;
\item if $z\in U$, then there exists an ordinal $\alpha\in\w_1$ such that $L(x_\alpha,y_\alpha)\cup (D\setminus (X_\alpha\cup Y_\alpha))\subseteq^* U$.
\end{itemize}

\begin{claim} $K$ is a compact Hausdorff space.
\end{claim} 

\begin{proof} The Hausdorff property of $K$ follows from the definition of the topology $\tau$. Also the definition of the topology $\tau$ implies that the induced topology on $L$ coincides with the topology generated by the linear order, which implies that the subspace $L$ of $K$ is homeomorphic to the compact linearly ordered space $\w_1+1+\w_1^*$. By \cite[3.12.1]{Eng}, the compactness of $K$ will follows as soon as we check that each infinite subset $S\subseteq K$ has a complete accumulation point, i.e., a point $x\in K$ such that for any neighborhood $O_x\subseteq K$ we have $|O_x\cap S|=|S|$. If $S$ is uncountable, then $|S|=\w_1=|S\cap L|$ and $S\cap L$ has a complete accumulation point by the compactness of $L$. So, we assume that $S$ is countable. If $S\cap L$ is infinite, then $S\cap L$ has a complete accumulation point by the compactness of $L$. So, we assume that $S\cap L$ is finite and hence $S\cap D$ is infinite. Using the condition (5), we can find the smallest cardinal $\alpha$ such that $S\cap D\cap (X_\alpha\cup Y_\alpha)$ is infinite. Then $x_\alpha$ or $y_\alpha$ is a (complete) accumulation point of $S$. 
\end{proof}

The definition of the topology $\tau$ ensures that $D$ coincides with the set $K'$ of isolated points of the compact space $K$. 

\begin{claim}\label{cl:cont} Any continuous function $\varphi:K\setminus \{z\}\to\IR$ has a continuous extension $\bar\varphi:K\to\IR$.
\end{claim}

\begin{proof}  It suffices to prove that $\osc_z(\varphi)=0$. To derive a contradiction, assume that $\e:=\osc_z(\varphi)>0$. 

Since any closed unbounded sets in $[0,\w_1)$ have non-empty intersection, the continuous function $\varphi$ is eventually constant on the sets $L[x_0,z)$ and $L(z,y_0]$. This means that for some $\gamma\in\w_1$  and real numbers $a,b$ the sets $\varphi(L[x_\alpha,z))$ and $\varphi(L(z,y_\alpha])$ are equal to the singletons $\{a\}$ and $\{b\}$, respectively. By the normality and zero-dimensionality of the compact space $K$, the compact set $L[x_0,x_\gamma]$ has a clopen neighborhood $U_\gamma$ in $K$ which is disjoint with the compact set $L[x_{\gamma+1},y_0]$. By analogy, the compact set $L[y_\gamma,y_0]$ has a clopen neighborhood $W_\gamma$ in $K$ which is disjoint with the compact set $L[x_0,y_{\gamma+1}]$.

We claim that $a=b$. Assuming that $a\ne b$, put $\delta:=\frac12|a-b|$ and consider the open neighborhood $V=\{x\in K\setminus\{z\}:|\varphi(x)-a|<\delta\}$ of $L[x_\gamma,z)$ in $K$. 

 For every $\alpha\in\w_1$, the inclusion $L[x_0,x_\alpha]\subseteq (U_\gamma\setminus W_\gamma)\cup V$ implies $X_\alpha\subseteq^*(U_\gamma\setminus W_\gamma)\cup V$.
 
The condition (3) yields a countable ordinal $\alpha>\gamma$ such that the set $Y_\alpha\cap(U_\gamma\setminus W_\gamma)\cup V$ is infinite and hence has  an accumulation point $y$ in $K$. The point $y$ belongs to $L\cap \overline{Y_\alpha}\cap (U_\gamma\setminus W_\gamma)\subseteq L[y_\alpha,y_0]\setminus W_\gamma\subseteq L[y_\alpha,y_\gamma)$ and hence has $\bar\varphi(y)=b$. Taking into account that $y\in L[y_\alpha,y_\gamma)$ and $L[y_\alpha,y_\gamma)\cap U_\gamma=\emptyset$, we conclude that $y\in\overline{V}$ and hence $b=\varphi(y)\in\overline{\varphi(V)}\subseteq [a-\delta,a+\delta]$, which contradicts the choice of $\delta=\frac12|a-b|$. This contradiction shows that $a=b$. 

Now consider the open neighborhood $W=V_\gamma\cup W_\gamma\cup \{x\in K\setminus \{z\}:|\bar\varphi(x)-a|<\frac13\e\}$ of $L\setminus\{z\}$ in $K\setminus\{z\}$. Then the set $K\setminus W$ is closed and accumulates at $z$ as $\diam(\varphi(W\setminus(V_\gamma\cup W_\gamma)))\le\frac23\e<\e=\osc_z(\varphi)$. Moreover, the continuity of $\varphi$ ensures that $z$ is a unique accumulation point of the set $D\setminus W$. But this contradicts the condition (4). This contradiction shows that $\osc_z(\varphi)=0$ and hence $\varphi$ has a  continuous extension $\bar\varphi:K\to\IR$.
\end{proof}

\begin{claim}\label{cl:soft} The compactification $K$ is $\IR$-soft. 
\end{claim}

\begin{proof} Given any bounded function $\varphi:D\to \IR$ with non-empty set $\Omega=\{x\in K:\osc_x(\varphi)>0\}$, we should find a homeomorphism $h\in\mathcal H(K,D)$ and $\e>0$ such that the set $\{y\in D:|\varphi(h(y))-\varphi(y)|>\e\}$ is infinite. By Claim~\ref{cl:cont}, the set $\Omega$ contains some point $x\in L\setminus\{z\}$. Using the first countability of $K$ at $x$, we can repeat the argument of the proof of Proposition~\ref{p:ss} and find a homeomorphism $h\in\mathcal H(K,D)$ and $\e>0$ such that the set $\{y\in D:|\varphi(h(y))-\varphi(y)|>\e\}$ is infinite.
\end{proof}

Since the remainder $L$ of the compactification $K$ is homeomorphic to $\w_1+1+\w_1^*$, Claim~\ref{cl:soft} implies that the space $\w_1+1+\w_1^*$ is an $\IR$-soft corona. 
\end{proof}

 \section{Acknowledgements}
 
 The authors express their sincere thanks to KP Hart and Todd Eisworth for their answers to the MO-questions\footnote{\tt https://mathoverflow.net/q/309583, https://mathoverflow.net/q/351615} of the first author. Those answers were used in the proofs of Theorem~\ref{t:Par}(4) and Proposition~\ref{p:sPar}, respectively.


\end{document}